\documentclass[11pt]{amsart}

\usepackage[utf8]{inputenc}
\usepackage{amsfonts}
\usepackage{amsmath}
\usepackage{amssymb}
\usepackage{amsthm}
\usepackage{mathrsfs}
\usepackage{stmaryrd}
\usepackage{color}
\usepackage[english]{babel}
\usepackage{fontenc}
\usepackage{url}
\usepackage{graphicx}
\usepackage{hyperref}
\usepackage{caption}
\usepackage{epstopdf}
\usepackage{float}
\usepackage{indentfirst}
\usepackage[export]{adjustbox}
\usepackage{tikz}
\usetikzlibrary{matrix,arrows,decorations.pathmorphing}

\allowdisplaybreaks

\newtheorem{theorem}{Theorem}[section]
\newtheorem{lemma}[theorem]{Lemma}
\newtheorem{proposition}[theorem]{Proposition}
\newtheorem{corollary}[theorem]{Corollary}
\newtheorem*{theorem*}{Theorem}
\newtheorem*{liftinglemma}{Lifting Lemma}

\theoremstyle{definition}

\newtheorem{remark}[theorem]{Remark}
\newtheorem*{notation}{Notation}
\newtheorem*{combinatorial description of boundary divisors}{Combinatorial description of boundary divisors}
\newtheorem*{combinatorial description of equivalence classes of F-curves}{Combinatorial description of equivalence classes of F-curves}
\newtheorem*{classification in characteristic 2}{Classification in characteristic $2$}
\newtheorem*{properties of the keel-vermeire divisors on M06bar}{Properties of the Keel-Vermeire divisors on $\overline{M}_{0,6}$}
\newtheorem*{fulton'squestion}{Question}
\newtheorem*{question1}{Question 1}
\newtheorem*{question2}{Question 2}
\newtheorem*{question3}{Question 3}
\newtheorem*{acknowledgements}{Acknowledgements}

\newtheorem{definition}[theorem]{Definition}

\newcommand{\peff}{\overline{\textrm{Eff}}}
\newcommand{\eff}{\textrm{Eff}}
\oddsidemargin -0pt \evensidemargin -0pt \topmargin -20pt
\textheight 600pt \textwidth 460pt

\setcounter{tocdepth}{1}

\author{Luca Schaffler}
\thanks{Partially supported by the Office of the Graduate School of the University of Georgia.}
\subjclass[2010]{14H10, 14C25, 14C17}
\keywords{Moduli of curves, Effective cycles}
\title{On the cone of effective 2-cycles on $\overline{M}_{0,7}$}

\begin{document}
\maketitle
\begin{abstract}
Fulton's question about effective $k$-cycles on $\overline{M}_{0,n}$ for $1<k<n-4$ can be answered negatively by appropriately lifting to $\overline{M}_{0,n}$ the Keel-Vermeire divisors on $\overline{M}_{0,k+1}$. In this paper we focus on the case of $2$-cycles on $\overline{M}_{0,7}$, and we prove that the $2$-dimensional boundary strata together with the lifts of the Keel-Vermeire divisors are not enough to generate the cone of effective $2$-cycles. We do this by providing examples of effective $2$-cycles on $\overline{M}_{0,7}$ that cannot be written as an effective combination of the aforementioned $2$-cycles. These examples are inspired by a blow up construction of Castravet and Tevelev.
\end{abstract}


\section*{Introduction}
An open problem in the birational geometry of $\overline{M}_{0,n}$, the moduli space of stable $n$-pointed rational curves, is the F-conjecture. This conjecture claims that the cone $\eff_1(\overline{M}_{0,n})$ of effective curves, is generated by the numerical equivalence classes of $1$-dimensional boundary strata, which are obtained by intersecting boundary divisors. This is known to be true if $n\leq7$ (see \cite{keelmckernan}).

A similar question (which is known as Fulton's question) was stated in \cite{keelmckernan} also for the cone $\eff_k(\overline{M}_{0,n})$ of effective $k$-cycles with $1<k<n-3$:
\begin{center}
\noindent\emph{Is the cone $\emph{Eff}_k(\overline{M}_{0,n})$ generated by the $k$-dimensional boundary strata?}
\end{center}
\noindent Denote by $V_k(\overline{M}_{0,n})$ the cone generated by the numerical equivalence classes of the $k$-dimensional boundary strata. Then the question is whether or not $\eff_k(\overline{M}_{0,n})$ is equal to $V_k(\overline{M}_{0,n})$. As Keel and Vermeire pointed out in the case of divisors (see \cite{gibneykeelmorrison}, \cite{vermeire}), the cone $V_{n-4}(\overline{M}_{0,n})$ is strictly contained in $\eff_{n-4}(\overline{M}_{0,n})$, and one can see that $V_k(\overline{M}_{0,n})\subsetneq\eff_k(\overline{M}_{0,n})$ for all $1<k<n-4$ by appropriately lifting to $\overline{M}_{0,n}$ the Keel-Vermeire divisors on $\overline{M}_{0,k+1}$ (see Section~\ref{lift!}, in particular Corollary~\ref{intermediatecasesout}). So the problem is to understand what lies in $\eff_k(\overline{M}_{0,n})\setminus V_k(\overline{M}_{0,n})$ (see \cite{hassetttschinkel}, \cite{castravet}, \cite{castravettevelev13}, \cite{dorangiansiracusajensen}, \cite{opie} for the codimension $1$ case). Recently, a lot of work has been done in order to understand the cones of effective and pseudoeffective cycles of higher codimension on projective varieties (see \cite{debarreeinlazarsfeldvoisin}, \cite{fulger}, \cite{tarasca}, \cite{lehmann}, \cite{fulgerlehmann} and \cite{chencoskun}).

We work over an algebraically closed field $\mathbb{K}$ of any characteristic. The main result of this paper (Theorem~\ref{firstmaintheorem}) can be synthesized in the following statement
\begin{theorem*}
The $2$-dimensional boundary strata on $\overline{M}_{0,7}$ together with the lifts of the Keel-Vermeire divisors on $\overline{M}_{0,6}$ are not enough to generate the cone $\emph{Eff}_2(\overline{M}_{0,7})$.
\end{theorem*}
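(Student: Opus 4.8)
The plan is to exhibit a single explicit effective $2$-cycle $Z$ on $\overline{M}_{0,7}$ and to certify, by a convex-duality argument, that its numerical class lies outside the cone $V_2(\overline{M}_{0,7})$ spanned by the $2$-dimensional boundary strata and the lifts of the Keel-Vermeire divisors. Both families of generators are effective, so $V_2\subseteq\mathrm{Eff}_2$ automatically, and the content of the theorem is the strictness of this inclusion. Since $V_2$ is generated by finitely many classes it is a closed polyhedral cone, so $[Z]\notin V_2$ is equivalent to the existence of a separating functional. Because $\overline{M}_{0,7}$ is a fourfold, the intersection pairing $N_2(\overline{M}_{0,7})_{\mathbb{R}}\times N_2(\overline{M}_{0,7})_{\mathbb{R}}\to\mathbb{R}$ is exactly the pairing defining numerical equivalence, hence nondegenerate; I may therefore represent the separating functional by a codimension-$2$ class $\gamma$ and aim to produce $\gamma$ with $\gamma\cdot g\ge 0$ for every generator $g$ of $V_2$ while $\gamma\cdot Z<0$.

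First I would set up the numerical bookkeeping. Using Keel's presentation of the Chow ring of $\overline{M}_{0,7}$ I would fix a working basis of $N_2=N^2$ and express in this basis (i) each closed $2$-dimensional boundary stratum, indexed by its dual-tree type, and (ii) each lift of a Keel-Vermeire divisor, realised inside a boundary divisor $D_S\cong\overline{M}_{0,6}$ with $|S|=2$. The key organising tool is the $S_7$-action: both families of generators fall into a small number of orbits, and decomposing the relevant numerical groups into isotypic pieces cuts the number of independent classes and intersection numbers that must be handled down to a manageable list. This is also where I would record the pairing matrix between candidate separating classes and the generators.

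Next I would construct $Z$ from the Castravet-Tevelev blow-up picture. Realising $\overline{M}_{0,7}$ as an iterated blow-up of $\mathbb{P}^4$ (Kapranov), I would take $Z$ to be the image, or strict transform, of a natural surface under the relevant birational morphism, and compute $[Z]$ by tracking the exceptional contributions through the pushforward. With $[Z]$ and all generators written in the fixed basis, locating $\gamma$ becomes a finite-dimensional feasibility problem: solve the linear system imposing $\gamma\cdot g\ge 0$ ranging over the generators, and among its solutions select one for which $\gamma\cdot Z$ is strictly negative. A negative value then certifies $[Z]\notin V_2$ and hence $V_2\subsetneq\mathrm{Eff}_2$.

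The main obstacle is the verification step. One must check that the chosen $\gamma$ pairs nonnegatively with \emph{every} $2$-stratum and \emph{every} Keel-Vermeire lift, not merely with a convenient subset, and this is where the bulk of the intersection-theoretic labour lies; the $S_7$-reduction and Keel's relations are precisely what make this check finite and, I expect, tractable. The subtlety is that $\gamma$ must land strictly between the dual cone $V_2^{\vee}$ and the genuinely nef cone of $2$-cycles: if $\gamma$ were nef it would pair nonnegatively with the effective class $Z$ and yield no contradiction, so the separating class must be nonnegative on the generators yet fail to be nef. A second point requiring equal care is confirming that $Z$ is genuinely effective and that its computed class is correct, since a mis-identification of $[Z]$ would invalidate the whole argument.
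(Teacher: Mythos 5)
Your overall framework is sound---$V_2^{KV}(\overline{M}_{0,7})$ is a closed polyhedral cone, the intersection pairing on $N_2(\overline{M}_{0,7})$ is nondegenerate, so membership can in principle be refuted by a class $\gamma$ that is nonnegative on all $735$ generators and negative on $[Z]$---but as written the proposal has a genuine gap: it never produces $Z$, never produces $\gamma$, and gives no reason to believe the feasibility problem you pose has a solution. The existence of such a separating $\gamma$ for some effective $Z$ is \emph{logically equivalent} to the theorem, so deferring both the choice of $Z$ ("the image of a natural surface") and the linear-programming verification ("solve the linear system\dots and select one for which $\gamma\cdot Z$ is strictly negative") leaves the entire mathematical content unestablished. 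The computation you are deferring is also not small: one must check nonnegativity against $420$ boundary $2$-strata and $315$ Keel--Vermeire lifts inside a $127$-dimensional space, and the $S_7$-symmetry helps less than you might hope because neither $Z$ nor $\gamma$ can be $S_7$-invariant (the candidate surfaces form orbits of size $210$). A secondary issue is that you conflate two constructions: the Castravet--Tevelev surfaces are embeddings of $\mathrm{Bl}_{p_1,\dots,p_7}\mathbb{P}^2$ built from projections of point configurations, not strict transforms under Kapranov's realization of $\overline{M}_{0,7}$ as an iterated blow-up of $\mathbb{P}^4$; computing $[Z]$ the way you describe would require extra work to reconcile the two.

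The paper avoids the dual certificate entirely. It takes $Z$ to be a \emph{special} hypertree surface: an embedded blow-up of $\mathbb{P}^2$ at seven points such that for three distinct labels $y$ (say $y=5,6,7$) the remaining six points realize an irreducible hypertree. The point of this condition is that for \emph{every} Keel--Vermeire lift $\sigma^{KV}_{ab,m,ij}$ at least one $y\in\{5,6,7\}$ satisfies $y\notin\{a,b\}$, and the forgetful map $\pi_y\colon\overline{M}_{0,7}\to\overline{M}_{0,6}$ then sends that lift to the boundary divisor $\delta_{ab}$, sends every boundary $2$-stratum to $0$ or a boundary divisor, and sends $[Z]$ to a Keel--Vermeire divisor on $\overline{M}_{0,6}$. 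Extremality of Keel--Vermeire divisors in $\eff_2(\overline{M}_{0,6})$ then forces every coefficient $\beta_{ab,m,ij}$ in a putative effective decomposition to vanish, reducing to the already-established fact that a hypertree surface is not in $V_2(\overline{M}_{0,7})$ (again by pushing forward and using that a Keel--Vermeire divisor is not an effective sum of boundary divisors). If you want to salvage your approach, this pushforward mechanism is the missing idea: composing with $\pi_{y*}$ is itself a linear functional that kills or controls all the generators at once, which is exactly the role you wanted $\gamma$ to play, but it comes with the positivity verification already done by known results on $\overline{M}_{0,6}$.
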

The lifts of the Keel-Vermeire divisors are defined as the pushforwards with respect to the natural inclusion $D_{ab}\hookrightarrow\overline{M}_{0,7}$ of the Keel-Vermeire divisors on the boundary divisor $D_{ab}$ (which is isomorphic to $\overline{M}_{0,6}$) for any $\{a,b\}\subset\{1,\ldots,7\}$. In this way we produce $315$ extremal rays of $\eff_2(\overline{M}_{0,7})$ which lie outside of $V_2(\overline{M}_{0,7})$ (see Proposition~\ref{315extremalrays}). Denote with $V_2^{KV}(\overline{M}_{0,7})$ the cone generated by $V_2(\overline{M}_{0,7})$ and by these lifts.

Examples of effective $2$-cycles on $\overline{M}_{0,7}$ whose numerical equivalence classes do not lie in the cone $V_2^{KV}(\overline{M}_{0,7})$ are produced using the following blow up construction of Castravet and Tevelev (see \cite[Theorem 3.1]{castravettevelev12}): take seven labeled points in $\mathbb{P}^2$ which do not lie on a (possibly reducible) conic. Then the blow up of $\mathbb{P}^2$ at these points can be embedded in $\overline{M}_{0,7}$ as an effective $2$-cycle. Using this construction and considering particular arrangements of seven labeled points in $\mathbb{P}^2$, we define what we call \emph{special hypertree surfaces} on $\overline{M}_{0,7}$ (see Definition~\ref{definitionofspecialhypertreesurface}), which are related to Castravet and Tevelev hypertrees (see \cite{castravettevelev13}). In Theorem~\ref{firstmaintheorem} we prove that the numerical equivalence class of a special hypertree surface does not lie in the cone $V_2^{KV}(\overline{M}_{0,7})$. This implies that $V_2^{KV}(\overline{M}_{0,7})\subsetneq\eff_2(\overline{M}_{0,7})$, which is our main result. An example of $7$-points arrangement in $\mathbb{P}^2$ which gives rise to a special hypertree surface on $\overline{M}_{0,7}$ is the one shown in Figure~\ref{figure1}.

All the other special hypertree surfaces are obtained by permuting the labels of the points arrangement in Figure~\ref{figure1}. In Section~\ref{classificationspecialhypertreesurfaces} we show that there are $210$ (resp. $30$) distinct numerical equivalence classes of special hypertree surfaces on $\overline{M}_{0,7}$ if the characteristic of the base field is different from $2$ (resp. equal to $2$).

Summing up, if we denote with $V_2^{KV+CT}(\overline{M}_{0,7})$ the cone generated by $V_2^{KV}(\overline{M}_{0,7})$ and by the numerical equivalence classes of the embedded blow ups of $\mathbb{P}^2$ at seven points, we have the following chain of containments
\begin{equation*}
V_2(\overline{M}_{0,7})\subsetneq V_2^{KV}(\overline{M}_{0,7})\subsetneq V_2^{KV+CT}(\overline{M}_{0,7})\subseteq\eff_2(\overline{M}_{0,7}).
\end{equation*}
\begin{center}
\begin{figure}
\centering
\includegraphics[scale=.85,valign=t]{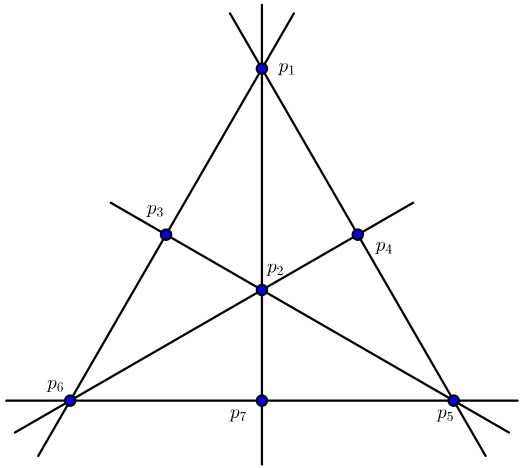}
\caption{7-points arrangement in $\mathbb{P}^2$ which gives a special hypertree surface on $\overline{M}_{0,7}$ $^{\ref{FootNoteForFigureCaption}}$.}
\label{figure1}
\end{figure}
\end{center}
\addtocounter{footnote}{1}\footnotetext{All the figures in this paper were realized using the software GeoGebra, Copyright \copyright International GeoGebra Institute, 2013.\label{FootNoteForFigureCaption}}
The second main result of this paper is an explicit description of the intersection theory of the $2$-dimensional boundary strata on $\overline{M}_{0,7}$. In Proposition~\ref{intersectiondistinct} and Proposition~\ref{intersectionsame} we give formulas that compute the intersection number of two $2$-dimensional boundary strata on $\overline{M}_{0,7}$. Then we study the numerical equivalence classes of these $2$-cycles (see Propositions~\ref{equivalenceclassesmadeclear} and \ref{distinctrays}), and this, together with some recent results of Chen and Coskun in \cite{chencoskun}, allows us to give a complete description of the cone $V_2(\overline{M}_{0,7})$ (see Corollary \ref{IseeV2}). We also fully describe the bilinear form $N_2(\overline{M}_{0,7})\times N_2(\overline{M}_{0,7})\rightarrow\mathbb{R}$ given by the intersection product (see Propositions~\ref{dimension127}~and~\ref{signature8641}).

In Section~\ref{preliminaries} we recall some basic facts and notations about $\overline{M}_{0,n}$ that are used in this paper. Section~\ref{vital2cyclesandintersections} contains the formulas for the intersection of two $2$-dimensional boundary strata on $\overline{M}_{0,7}$, and the complete study of the cone $V_2(\overline{M}_{0,7})$. In Section~\ref{lift!} there is a detailed description of the lifting technique, which is immediately applied in Section~\ref{liftsofthekeelvermeiredivisors} to describe the lifts to $\overline{M}_{0,7}$ of the Keel-Vermeire divisors on $\overline{M}_{0,6}$. Section~\ref{embeddedblowupsofp2inm0nbar} is where we discuss the embedded blow ups of $\mathbb{P}^2$ in $\overline{M}_{0,7}$ and where we prove our main theorem. In Section~\ref{generalization} we generalize the construction of the two cones $V_2^{KV}(\overline{M}_{0,7})$ and $V_2^{KV+CT}(\overline{M}_{0,7})$ to any $\overline{M}_{0,n}$ for $n>7$. We also state some questions that will be the object of further investigation.

\

\begin{acknowledgements}
I would like to express my gratitude to my advisor, Valery Alexeev, for his insightful comments and helpful discussions. I am also grateful to Angela Gibney for her great suggestions and support. Many thanks to Noah Giansiracusa, Daniel Krashen, Dino Lorenzini and Robert Varley for interesting discussions and for their helpful feedback. My gratitude also goes to Ana-Maria Castravet, Dawei Chen, Izzet Coskun and Jenia Tevelev for great discussions related to this paper. A special thanks to Ana-Maria Castravet for pointing out a mistake in the first version of this paper, and to Jenia Tevelev for a helpful discussion on Lemma~\ref{tevelevmihadetto}. I am also grateful to the referees for their careful reading of the paper and their comments.
\end{acknowledgements}


\tableofcontents


\section{Preliminaries: boundary strata on $\overline{M}_{0,n}$}
\label{preliminaries}
In this section we review some of the main definitions and facts about the boundary strata on $\overline{M}_{0,n}$. For a more detailed discussion, see for example \cite{keelmckernan}. Equivalence between $k$-cycles on $\overline{M}_{0,n}$ refers to numerical equivalence, which is the same as rational equivalence and algebraic equivalence by \cite{keel}.
\begin{definition}
\label{definitionvitalcycle}
The irreducible components of the locus of points on $\overline{M}_{0,n}$ parametrizing stable $n$-pointed rational curves with at least $n-3-k$ nodes, have dimension $k$ and are called \emph{boundary $k$-strata}. Codimension $1$ (resp. $1$-dimensional) boundary strata are also called \emph{boundary divisors} (resp. \emph{F-curves}).
\end{definition}
\begin{definition}
Given $n\geq3$ and $0\leq k\leq n-3$, define $V_k(\overline{M}_{0,n})$ to be the cone generated by the equivalence classes of the boundary $k$-strata on $\overline{M}_{0,n}$ ($V$ stands for ``vital cycles", as they were called in \cite{keelmckernan}).
\end{definition}
\begin{notation}
If $n$ is a positive integer, then $[n]$ denotes the set $\{1,\ldots,n\}$.
\end{notation}
\begin{combinatorial description of boundary divisors}
There is a bijection between boundary divisors and partitions $I\amalg I^c=[n]$, with $2\leq|I|\leq n-2$. $D_{I}=D_{I^c}$ denotes the boundary divisor corresponding to the partition $I\amalg I^c=[n]$. $\delta_{I}=\delta_{I^c}$ denotes the equivalence class of $D_{I}$. For simplicity, the equivalence class of a boundary divisor will be called just boundary divisor.
\end{combinatorial description of boundary divisors}
\begin{combinatorial description of equivalence classes of F-curves}
There is a bijection between equivalence classes of F-curves and partitions of $[n]=\{1,\ldots,n\}$ into four nonempty subsets (see \cite[Lemma 4.3]{keelmckernan}). Given a partition $I_1\amalg I_2\amalg I_3\amalg I_4=[n]$, we denote by $F_{I_1,I_2,I_3,I_4}$ the equivalence class of the F-curves corresponding to that partition.
\end{combinatorial description of equivalence classes of F-curves}
Every boundary stratum on $\overline{M}_{0,n}$ can be realized as the complete intersection of all the boundary divisors containing it as follows. Let $B$ be a boundary stratum and let $C(B)$ be the stable $n$-pointed rational curve corresponding to the generic point of $B$ ($C(B)$ has as many nodes as the codimension of $B$). If $\textrm{Sing}(C(B))$ denotes the set of singular points of $C(B)$, given $p\in\textrm{Sing}(C(B))$ let $T_p$ be the set of markings that are over one of the two connected components of the normalization of $C(B)$ at $p$. Then we have that
\begin{equation*}
B=\bigcap_{p\in\textrm{Sing}(C(B))}D_{T_p}.
\end{equation*}
Moreover, since the boundary of $\overline{M}_{0,n}$ has normal crossings, we have that the equivalence class of $B$ is the product of all the $\delta_{T_p}$ as $p$ varies among the nodes of $C(B)$.

\

The last thing we want to recall is \cite[Fact 4]{keel}: given two boundary divisors $D_I$, $D_J$ on $\overline{M}_{0,n}$, then $D_I\cap D_J\neq\emptyset\Leftrightarrow I**J$, which by definition means
\begin{equation*}
I\subseteq J~\textrm{or}~I\subseteq J^c~\textrm{or}~I\supseteq J~\textrm{or}~I\supseteq J^c.
\end{equation*}


\section{The cone of boundary $2$-strata on $\overline{M}_{0,7}$}
\label{vital2cyclesandintersections}
The main object of our study is $\eff_2(\overline{M}_{0,7})$, which is a subcone of the real vector space $N_2(\overline{M}_{0,7})$ (in Section~\ref{studyofthebilinearform} we show that $\dim_{\mathbb{R}}N_2(\overline{M}_{0,7})=127$). We start by analyzing the subcone $V_2(\overline{M}_{0,7})\subseteq\eff_2(\overline{M}_{0,7})$. The first thing we want to do is to give a combinatorial description of the boundary $2$-strata on $\overline{M}_{0,7}$. After this, we study their intersections and their equivalence classes.
\subsection{Combinatorial description of the boundary $2$-strata on $\overline{M}_{0,7}$}
According to Definition~\ref{definitionvitalcycle}, a boundary $2$-stratum on $\overline{M}_{0,7}$ is the closure of the locus of points parametrizing stable $7$-pointed rational curves of the shape shown in Figure~\ref{figure2}, where $I\amalg J\amalg K$ is a given partition of $[7]$.
\begin{center}
\begin{figure}[H]
\centering
\includegraphics[scale=.58]{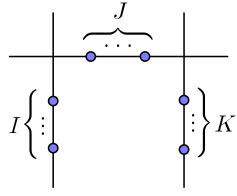}
\caption{Stable $7$-pointed rational curve parametrized by the generic point of a boundary $2$-stratum.}
\label{figure2}
\end{figure}
\end{center}
Stability imposes that $2\leq|I|\leq4$, $1\leq|J|\leq3$ and $2\leq|K|\leq4$. Therefore

\

\noindent\emph{there is a bijection between set-theoretically distinct boundary $2$-strata, and partitions $I\amalg J\amalg K$ of $[7]$, with $2\leq|I|\leq4$, $1\leq|J|\leq3$ and $2\leq|K|\leq4$, modulo the equivalence relation $I\amalg J\amalg K\sim K\amalg J\amalg I$.}

\

\noindent With $s_{I,J,K}\subset\overline{M}_{0,7}$ we denote the boundary $2$-stratum corresponding to the partition $I\amalg J\amalg K$ of $[7]$. The equivalence class of $s_{I,J,K}$ is denoted by $\sigma_{I,J,K}$. Obviously, we have that $\sigma_{I,J,K}=\delta_I\cdot\delta_K$. An easy combinatorial count tells us that there are $490$ set-theoretically distinct boundary $2$-strata $s_{I,J,K}$. A similar description applies for codimension $2$ boundary strata on $\overline{M}_{0,n}$ for $n\geq8$. For general results about boundary strata of codimension $2$ on $\overline{M}_{0,n}$, see \cite[Section 6]{chencoskun}.
\subsection{Intersection of two distinct boundary $2$-strata}
Given $\sigma_{I,J,K}$ and $\sigma_{L,M,N}$, our goal is to compute the intersection $\sigma_{I,J,K}\cdot\sigma_{L,M,N}=\delta_I\cdot\delta_K\cdot\delta_L\cdot\delta_N$. This intersection is clearly zero, unless we require that the condition defined here below is satisfied.
\begin{definition}
Consider two boundary $2$-strata $s_{I,J,K}$ and $s_{L,M,N}$. Assume that
\begin{equation*}
I**L~\textrm{and}~I**N~\textrm{and}~K**L~\textrm{and}~K**N.
\end{equation*}
If this condition is satisfied, we write $s_{I,J,K}**s_{L,M,N}$.
\end{definition}
\begin{lemma}
\label{avolteritornano}
Let $D_{I_1},D_{I_2}$ and $D_{I_3}$ be three distinct boundary divisors on $\overline{M}_{0,7}$ such that $I_a**I_b$ for all $\{a,b\}\subset\{1,2,3\}$. Then $D_{I_1}\cap D_{I_2}\cap D_{I_3}$ is an F-curve.
\end{lemma}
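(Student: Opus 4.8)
The plan is to pass to the combinatorics of dual trees. A point of $\overline{M}_{0,7}$ lies on $D_I$ exactly when the dual tree of the stable curve it parametrizes has a node (internal edge) whose removal splits the seven markings as $I\amalg I^c$. Hence $D_{I_1}\cap D_{I_2}\cap D_{I_3}$ is the closure of the locus of stable $7$-pointed rational curves whose dual tree simultaneously carries three nodes realizing the three splits $I_a\amalg I_a^c$. By the description of boundary strata recalled before the statement, if this intersection is nonempty then it is a union of boundary strata, and I must show it is a single F-curve.

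First I would exploit the freedom $D_{I_a}=D_{I_a^c}$ to orient all three splits away from the marking $7$: replacing $I_a$ by its complement if necessary, I may assume $7\notin I_a$ for each $a$. A short check shows that, under this normalization, the hypothesis $I_a**I_b$ becomes the statement that $I_a$ and $I_b$ are either nested or disjoint, since among the four sets $I_a\cap I_b$, $I_a\cap I_b^c$, $I_a^c\cap I_b$, $I_a^c\cap I_b^c$ the last one now contains $7$ and hence cannot be the empty one, forcing one of the first three to vanish. Thus $\{I_1,I_2,I_3\}$ is a laminar family of distinct subsets of $[6]$, each of size between $2$ and $5$. Such a family realizes only a handful of shapes (three pairwise disjoint sets; a chain $I_1\subsetneq I_2\subsetneq I_3$; two disjoint sets contained in a third; or one containment together with a disjoint set), and in each shape I would write down explicitly the stable tree $T$ with exactly three internal edges that realizes the three splits, verifying stability of each component from the bounds $2\le|I_a|\le5$. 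This exhibits $D_{I_1}\cap D_{I_2}\cap D_{I_3}$ as the closure $\overline{s_T}$ of a single boundary stratum; in particular the intersection is nonempty and irreducible.

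It remains to identify $\overline{s_T}$ as an F-curve. Since $D_{I_1},D_{I_2},D_{I_3}$ are distinct and the boundary of $\overline{M}_{0,7}$ has normal crossings, the nonempty intersection has codimension exactly three, so $\dim\overline{s_T}=\dim\overline{M}_{0,7}-3=1$; equivalently, the generic curve of $s_T$ has exactly three nodes. A one-dimensional boundary stratum is an F-curve by Definition~\ref{definitionvitalcycle}: reading off the valences of $T$, exactly one component has valence four, and its four special points group the seven markings into the four-part partition of $[7]$ labeling the resulting F-curve class. The main obstacle is precisely the content of the middle step, namely guaranteeing that pairwise compatibility of the three splits forces them to be realized simultaneously by one tree with three nodes, so that the intersection is neither empty nor of dimension $0$; the laminar normalization, which is the concrete incarnation of the correspondence between compatible split systems and trees, is what makes this transparent.
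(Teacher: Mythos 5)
Your proof is correct and follows essentially the same route as the paper's: both reduce, by replacing sets with their complements, to a short list of mutual configurations of the three splits and then verify in each case that the three splits are realized by a single stable tree with three nodes, whose stratum closure is the intersection. Your normalization ($7\notin I_a$, turning pairwise compatibility into a laminar family on $[6]$) is a cleaner packaging of the paper's reduction, which instead arranges $I_1\cap I_2=\emptyset$ and cuts sixteen cases down to four by complementing $I_3$.
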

\begin{proof}
Assume without loss of generality that $I_1\cap I_2=\emptyset$. We know that $I_3**I_1$ and $I_3**I_2$, therefore
\begin{align*}
&(I_3\subset I_1~\textrm{or}~I_3\subset I_1^c~\textrm{or}~I_3\supset I_1~\textrm{or}~I_3\supset I_1^c)~\textrm{and}\\
&(I_3\subset I_2~\textrm{or}~I_3\subset I_2^c~\textrm{or}~I_3\supset I_2~\textrm{or}~I_3\supset I_2^c).
\end{align*}
Among these $16$ cases, the only possible are
\begin{align*}
(I_3\subset I_1~\textrm{and}~I_3\subset I_2^c)~&\textrm{or}~(I_3\subset I_1^c~\textrm{and}~I_3\subset I_2)~\textrm{or}\\
(I_3\subset I_1^c~\textrm{and}~I_3\subset I_2^c)~&\textrm{or}~(I_3\subset I_1^c~\textrm{and}~I_3\supset I_2)~\textrm{or}\\
(I_3\supset I_1~\textrm{and}~I_3\subset I_2^c)~&\textrm{or}~(I_3\supset I_1~\textrm{and}~I_3\supset I_2)~\textrm{or}\\
(I_3\supset I_1~\textrm{and}~I_3\supset I_2^c)~&\textrm{or}~(I_3\supset I_1^c~\textrm{and}~I_3\supset I_2).
\end{align*}
Up to changing $I_3$ with $I_3^c$, we just need to consider
\begin{align*}
(I_3\subset I_1~\textrm{and}~I_3\subset I_2^c)~&\textrm{or}~(I_3\subset I_1^c~\textrm{and}~I_3\subset I_2)~\textrm{or}\\
(I_3\subset I_1^c~\textrm{and}~I_3\subset I_2^c)~&\textrm{or}~(I_3\subset I_1^c~\textrm{and}~I_3\supset I_2).
\end{align*}
Now, inspecting each one of these four cases, it is easy to see that the intersection $D_{I_1}\cap D_{I_2}\cap D_{I_3}$ is an F-curve.
\end{proof}
\begin{lemma}
\label{normalformintersection}
Let $s_{I,J,K}$ and $s_{L,M,N}$ be two distinct boundary $2$-strata on $\overline{M}_{0,7}$ satisfying the condition $s_{I,J,K}**s_{L,M,N}$. Then we can write $\sigma_{I,J,K}\cdot\sigma_{L,M,N}=\delta_{I_1}\cdot\delta_{I_2}\cdot\delta_{I_3}\cdot\delta_{I_4}$ where, either the four boundary divisors $\delta_{I_1},\delta_{I_2},\delta_{I_3}$ and $\delta_{I_4}$ are pairwise distinct, or exactly two of them are equal. In the latter case, we assume that $I_3=I_4$. In any case, we assume that $I_1\cap I_2=\emptyset$ and $|I_1|\leq|I_2|$.
\end{lemma}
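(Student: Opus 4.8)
The plan is to treat the four classes appearing in $\sigma_{I,J,K}\cdot\sigma_{L,M,N}=\delta_I\cdot\delta_K\cdot\delta_L\cdot\delta_N$ as a multiset of boundary divisors and to show that it contains at most one repetition. First I would record two elementary facts. Since $I\amalg J\amalg K=[7]$ with all three parts nonempty, the sets $I$ and $K$ are disjoint and neither is the complement of the other (indeed $K^c=I\cup J\supsetneq I$ because $J\neq\emptyset$), so $\delta_I\neq\delta_K$; symmetrically $\delta_L\neq\delta_N$. Moreover all six pairs drawn from $\{I,K,L,N\}$ satisfy the $**$ relation: the pairs $(I,K)$ and $(L,N)$ because their members are disjoint, and the remaining four by the hypothesis $s_{I,J,K}**s_{L,M,N}$.

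Next I would rule out the possibility of two coincidences among the four ``crossing'' pairs $(I,L),(I,N),(K,L),(K,N)$. Any two coincidences that share one of the four sets immediately force either $\delta_I=\delta_K$ or $\delta_L=\delta_N$, contradicting the first paragraph. The only remaining possibilities are the two ``disjoint'' pairs of coincidences, namely $\{\delta_I=\delta_L,\ \delta_K=\delta_N\}$ and $\{\delta_I=\delta_N,\ \delta_K=\delta_L\}$. I expect this to be the heart of the argument, and I would dispose of it by a short case analysis: writing each coincidence as an equality of one set with the other set or its complement, and then using the disjointness of $I,K$ (resp. of $L,N$) together with $J,M\neq\emptyset$, one checks in every case that either a forced intersection is nonempty (impossible), or $J=\emptyset$ (impossible), or the partition $\{L,M,N\}$ equals $\{I,J,K\}$, contradicting $s_{I,J,K}\neq s_{L,M,N}$.

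Having shown that at most one coincidence occurs, the multiset $\{\delta_I,\delta_K,\delta_L,\delta_N\}$ consists either of four distinct classes or of three distinct classes with exactly one repeated, which is precisely the dichotomy in the statement. Finally I would arrange the labels. If two classes coincide, I name them $\delta_{I_3}=\delta_{I_4}$ and designate the other two as $I_1,I_2$; in the distinct case I take $I_1,I_2$ to be $I,K$ and $I_3,I_4$ to be $L,N$. In either situation the two classes playing the role of $I_1,I_2$ satisfy the $**$ relation, and I would use that any $\delta_A,\delta_B$ with $A**B$ admit disjoint representatives: replacing $A$ by $A^c$ and/or $B$ by $B^c$ (which leaves the classes unchanged), each of the four alternatives in the definition of $A**B$ yields an empty intersection after the appropriate complementation. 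Choosing such disjoint representatives for $I_1,I_2$ and swapping them if necessary so that $|I_1|\leq|I_2|$ completes the normalization. The only genuinely delicate point is the case analysis eliminating the two simultaneous ``crossing'' coincidences; everything else is bookkeeping.
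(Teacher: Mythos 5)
Your argument is correct and follows essentially the same route as the paper: show that at most one coincidence can occur among $\delta_I,\delta_K,\delta_L,\delta_N$, then normalize the remaining pair to disjoint representatives by passing to complements, using $**$. The only remark is that the step you flag as delicate is immediate: if both crossing coincidences held, then $\{D_I,D_K\}=\{D_L,D_N\}$ as sets of boundary divisors, whence $s_{I,J,K}=D_I\cap D_K=D_L\cap D_N=s_{L,M,N}$ outright, so no further case analysis on complements is needed.
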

\begin{proof}
Write $\sigma_{I,J,K}\cdot\sigma_{L,M,N}=\delta_I\cdot\delta_K\cdot\delta_L\cdot\delta_N$. Obviously $\delta_I\neq\delta_K$ and $\delta_L\neq\delta_N$. If two boundary divisors among $\delta_I,\delta_K,\delta_L$ and $\delta_N$ are equal, assume without loss of generality that $\delta_K=\delta_L$. Then we must have that $\delta_N\neq\delta_I$, or we would have $s_{I,J,K}=s_{L,M,N}$. Also, $\delta_N\neq\delta_K=\delta_L$. This proves that there can be at most two boundary divisors among $\delta_I,\delta_K,\delta_L$ and $\delta_N$ that are equal. So, let us write $\delta_I\cdot\delta_K\cdot\delta_L\cdot\delta_N=\delta_A\cdot\delta_B\cdot\delta_{I_3}\cdot\delta_{I_4}$, where $\{I,K,L,N\}=\{A,B,I_3,I_4\}$ and $I_3=I_4$ in case two boundary divisors among $\delta_I,\delta_K,\delta_L$ and $\delta_N$ coincide. Finally, we can obviously rewrite $\delta_A\cdot\delta_B=\delta_{I_1}\cdot\delta_{I_2}$ with $I_1\cap I_2=\emptyset$ (here we use the hypothesis $s_{I,J,K}**s_{L,M,N}$) and $|I_1|\leq|I_2|$.
\end{proof}
\begin{proposition}
\label{intersectiondistinct}
Let $s_{I,J,K}$ and $s_{L,M,N}$ be two distinct boundary $2$-strata on $\overline{M}_{0,7}$ such that $s_{I,J,K}**s_{L,M,N}$ (otherwise, the intersection number $\sigma_{I,J,K}\cdot\sigma_{L,M,N}$ is trivially zero). Write $\sigma_{I,J,K}\cdot\sigma_{L,M,N}=\delta_{I_1}\cdot\delta_{I_2}\cdot\delta_{I_3}\cdot\delta_{I_4}$ as prescribed by Lemma~\ref{normalformintersection} (recall that in this lemma we assumed, among other things, that $|I_1|\leq|I_2|$). Then
\begin{displaymath}
\sigma_{I,J,K}\cdot\sigma_{L,M,N}=\left\{ \begin{array}{ll}
-1~&\textrm{if $\delta_{I_3}=\delta_{I_4}$, $|I_1|=2$ and $|I_2|\in\{2,4\}$}\\
1~&\textrm{if $\delta_{I_1},\delta_{I_2},\delta_{I_3}$ and $\delta_{I_4}$ are pairwise distinct}\\
0~&\textrm{otherwise}.
\end{array} \right.
\end{displaymath}
\end{proposition}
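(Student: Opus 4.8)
The plan is to compute the zero-cycle $\delta_{I_1}\cdot\delta_{I_2}\cdot\delta_{I_3}\cdot\delta_{I_4}$ on the four-dimensional space $\overline{M}_{0,7}$ by first collapsing three of the factors to an F-curve and then intersecting with the remaining divisor. By Lemma~\ref{normalformintersection} we have $I_1\cap I_2=\emptyset$ and $|I_1|\le|I_2|$, and the four divisors are either pairwise distinct or satisfy $I_3=I_4$. In either case $\delta_{I_1},\delta_{I_2},\delta_{I_3}$ are three distinct, pairwise compatible boundary divisors (we have $I_1**I_2$ since $I_1\cap I_2=\emptyset$, and the other compatibilities come from $s_{I,J,K}**s_{L,M,N}$), so Lemma~\ref{avolteritornano} applies and $F:=\delta_{I_1}\cap\delta_{I_2}\cap\delta_{I_3}$ is an F-curve, say $F=F_{N_1,N_2,N_3,N_4}$. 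The computation then reduces to $\sigma_{I,J,K}\cdot\sigma_{L,M,N}=F\cdot\delta_{I_4}$, for which I would invoke the standard intersection formula between an F-curve and a boundary divisor (see \cite{keelmckernan}):
\begin{displaymath}
F_{N_1,N_2,N_3,N_4}\cdot\delta_I=\left\{\begin{array}{ll}1&\textrm{if }I=N_i\cup N_j\textrm{ for some }i\neq j,\\-1&\textrm{if }I=N_i\textrm{ (equivalently }I^c=N_i\textrm{) for some }i,\\0&\textrm{otherwise}.\end{array}\right.
\end{displaymath}
Everything then becomes the combinatorial problem of reading off the partition $\{N_1,N_2,N_3,N_4\}$ from the triple $(I_1,I_2,I_3)$ and locating $I_4$ relative to it.

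When $\delta_{I_1},\delta_{I_2},\delta_{I_3},\delta_{I_4}$ are pairwise distinct, I would argue the intersection is $+1$. The four splits are pairwise compatible, hence jointly compatible, so they are precisely the four internal edges of a trivalent tree with seven leaves; equivalently they cut out a single zero-dimensional boundary stratum (a stable curve with four nodes). Since the boundary of $\overline{M}_{0,7}$ is a simple normal crossings divisor (\cite{keel}), this intersection is transverse and consists of one reduced point, giving $\delta_{I_1}\cdot\delta_{I_2}\cdot\delta_{I_3}\cdot\delta_{I_4}=1$. In the language of the formula above, the fourth split $I_4$ is forced to resolve the unique valence-$4$ vertex of the dual tree of $F$, hence to be one of the three central splits $N_i\cup N_j$, which again yields $+1$.

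The substantial case is $I_3=I_4$, where $\sigma_{I,J,K}\cdot\sigma_{L,M,N}=F\cdot\delta_{I_3}$ with $F\subseteq\delta_{I_3}$. By the formula this equals $-1$ exactly when $I_3$ (up to complement) is one of the blocks $N_i$, and $0$ otherwise; geometrically, $F\cdot\delta_{I_3}=-1$ precisely when the edge of the dual tree corresponding to $I_3$ is incident to the moving component, and $0$ when that edge is internal to one of the fixed tails. The heart of the proof is to show that this incidence is governed exactly by the sizes, namely that $I_3$ is incident to the moving component if and only if $(|I_1|,|I_2|)\in\{(2,2),(2,4)\}$. I would do this by classifying $F$ according to the number $d\in\{1,2,3\}$ of tails attached to the moving component, which forces the partition type of $\{N_1,N_2,N_3,N_4\}$ to be $\{2,2,2,1\}$, $\{3,2,1,1\}$ or $\{4,1,1,1\}$ respectively; in each shape one lists the three edges, determines which of them can play the role of the repeated divisor $I_3$, and reads off the sizes of the disjointified pair $\{I_1,I_2\}$. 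Here the complementation built into Lemma~\ref{normalformintersection} is exactly what converts a size-$3$ block into a size-$4$ set, and this is the source of the value $|I_2|=4$.

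The main obstacle is the bookkeeping in this last step: one must track carefully that the normalization conditions $I_1\cap I_2=\emptyset$ and $|I_1|\le|I_2|$ persist after replacing a block by its complement, and verify that the only size pairs actually realized by a repeated divisor are $(2,2)$, $(2,3)$ and $(2,4)$, together with $(3,3)$, with the first and third giving $-1$ and the other two giving $0$; this is precisely the dichotomy in the statement. A secondary point requiring care is that the intersection number depends only on the numerical class $F_{N_1,N_2,N_3,N_4}$ and not on the chosen rigidification of the tails, so that the F-curve formula remains valid even when some block $N_i$ has size $\ge 3$ and the associated tail is reducible.
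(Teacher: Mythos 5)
Your strategy is sound and would yield the stated formula, but it is genuinely different from the route taken in the paper. The paper stops the reduction one step earlier: it writes $\delta_{I_1}\cdot\delta_{I_2}=[S]$ for the boundary $2$-stratum $S=s_{I_1,(I_1\cup I_2)^c,I_2}$, applies the projection formula to get $(i^*\delta_{I_3})\cdot(i^*\delta_{I_4})$ on $S$, and then uses the explicit geometry of the two possible surfaces ($S\cong\overline{M}_{0,5}$, a degree-$5$ del Pezzo where F-curves are $(-1)$-curves, when $|I_1|=2$ and $|I_2|\in\{2,4\}$; $S\cong\mathbb{P}^1\times\mathbb{P}^1$ where F-curves are rulings of self-intersection $0$, when $|I_2|=3$) --- this is where the size dichotomy in the statement comes from, essentially for free. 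You instead push down to the curve $F=D_{I_1}\cap D_{I_2}\cap D_{I_3}$ and invoke the standard F-curve/boundary-divisor pairing; your treatment of the pairwise-distinct case is arguably cleaner than the paper's (four distinct pairwise compatible splits of $[7]$ are jointly realized on a tree, which is forced to be trivalent, so the normal crossings property gives a single reduced point), whereas your repeated-divisor case trades the paper's surface classification for a classification of dual-tree shapes and ends up with comparable bookkeeping. Two things you should make explicit to close the argument: (a) pairwise compatibility of all four splits $I_1,\dots,I_4$ does follow from the hypotheses, since all six pairs among $I,K,L,N$ are compatible ($I\cap K=\emptyset$, $L\cap N=\emptyset$, and the four cross-compatibilities from $s_{I,J,K}**s_{L,M,N}$) and compatibility is preserved under complementation; and (b) the splits-equivalence fact that pairwise compatible splits are simultaneously displayed on a tree, which you use both to identify the valence-$4$ vertex of $F$ and to see that $D_{I_4}$ actually meets $F$. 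With those two points supplied and the case-by-case size check carried out (note that in the $(2,4)$ case the repeated split can be a $3$-element subset of $I_2$, in which case it is the \emph{complement} that appears as a block of the F-curve partition --- the formula still returns $-1$, so the up-to-complement convention must be tracked carefully, as you anticipate), the proof is complete.
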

\begin{proof}
Let us make some preliminary observations. We have that
\begin{equation*}
\sigma_{I,J,K}\cdot\sigma_{L,M,N}=\delta_{I_1}\cdot\delta_{I_2}\cdot\delta_{I_3}\cdot\delta_{I_4}=\sigma_{I_1,(I_1\cup I_2)^c,I_2}\cdot\delta_{I_3}\cdot\delta_{I_4}=[s_{I_1,(I_1\cup I_2)^c,I_2}]\cdot\delta_{I_3}\cdot\delta_{I_4}.
\end{equation*}
Define $S:=s_{I_1,(I_1\cup I_2)^c,I_2}$ and let $i\colon S\hookrightarrow\overline{M}_{0,7}$ be the inclusion morphism. Using the projection formula, we obtain that
\begin{equation*}
[S]\cdot\delta_{I_3}\cdot\delta_{I_4}=i_*[S]\cdot(\delta_{I_3}\cdot\delta_{I_4})=[S]\cdot i^*(\delta_{I_3}\cdot\delta_{I_4})=i^*(\delta_{I_3}\cdot\delta_{I_4})=(i^*\delta_{I_3})\cdot(i^*\delta_{I_4}).
\end{equation*}
Now, for $j=3,4$, $i^*\delta_{I_j}=[D_{I_1}\cap D_{I_2}\cap D_{I_j}]$, where $D_{I_1}\cap D_{I_2}\cap D_{I_j}$ is an F-curve by Lemma~\ref{avolteritornano}. So $i^*\delta_{I_3}$ and $i^*\delta_{I_4}$ are two equivalence classes of F-curves on the boundary $2$-stratum $S$. There are two possibilities for $S$ up to isomorphism.
\begin{itemize}
\item[(i)] If $|I_1|=2$ and $|I_2|\in\{2,4\}$, then $S\cong\overline{M}_{0,5}$. By Kapranov's blow up construction of $\overline{M}_{0,n}$ (see \cite{kapranov}), we know that $\overline{M}_{0,5}$ is isomorphic to the blow up of $\mathbb{P}^2$ at four points in general linear position. Moreover, the F-curves of $\overline{M}_{0,5}$ correspond to the exceptional divisors of the blow up, and the strict transforms of the lines spanned by the blown up points.
\item[(ii)]  If $|I_2|=3$ and $|I_1|\in\{2,3\}$, then $S\cong\overline{M}_{0,4}\times\overline{M}_{0,4}$, which is isomorphic to $\mathbb{P}^1\times\mathbb{P}^1$. An F-curve on $S$ corresponds to a line on $\mathbb{P}^1\times\mathbb{P}^1$ in the form $\{p\}\times\mathbb{P}^1$ or $\mathbb{P}^1\times\{p\}$ for some point $p\in\mathbb{P}^1$.
\end{itemize}
Observe that in case (i) (resp. case (ii)) the self-intersection of an F-curve is $-1$ (resp. 0), and in both cases two distinct F-curves intersect at one point if and only if their intersection number is $1$.

Now, let us prove our intersection formula for $\sigma_{I,J,K}\cdot\sigma_{L,M,N}$.
\begin{itemize}
\item[$\bullet$] $|I_1|=2$ and $|I_2|=2$. Up to permuting the labels, we have that
\begin{equation*}
S\cong\overline{M}_{0,\{1,2,x\}}\times\overline{M}_{0,\{x,3,4,5,y\}}\times\overline{M}_{0,\{y,6,7\}}\cong\overline{M}_{0,\{x,3,4,5,y\}},
\end{equation*}
where $x$ and $y$ are the nodes of the stable $7$-pointed rational curve corresponding to the generic point of $S$. If $\delta_{I_3}=\delta_{I_4}$, then $(i^*\delta_{I_3})\cdot(i^*\delta_{I_4})$ is equal to the self-intersection of an F-curve on $S\cong\overline{M}_{0,\{x,3,4,5,y\}}$, which gives $\sigma_{I,J,K}\cdot\sigma_{L,M,N}=-1$. So let us assume that $\delta_{I_1},\delta_{I_2},\delta_{I_3}$ and $\delta_{I_4}$ are pairwise distinct. Given $j=3,4$, since $I_1**I_j$ and $I_2**I_j$, then $i^*\delta_{I_j}$ is equal to one of the following boundary divisors on $\overline{M}_{0,\{x,3,4,5,y\}}$
\begin{equation*}
\delta_{34},\delta_{35},\delta_{45}~\textrm{or}~\delta_{345}.
\end{equation*}
If $i^*\delta_{I_3}=\delta_{34},\delta_{35}$ or $\delta_{45}$, then $i^*\delta_{I_4}=\delta_{345}$ because $I_3**I_4$ and $\delta_{I_3}\neq\delta_{I_4}$. If $i^*\delta_{I_3}=\delta_{345}$, then $i^*\delta_{I_4}$ has to be equal to $\delta_{34},\delta_{35}$ or $\delta_{45}$. In any case, $\sigma_{I,J,K}\cdot\sigma_{L,M,N}=1$.
\item[$\bullet$] $|I_1|=2$ and $|I_2|=4$. We have isomorphisms
\begin{equation*}
S\cong\overline{M}_{0,\{1,2,x\}}\times\overline{M}_{0,\{x,3,y\}}\times\overline{M}_{0,\{y,4,5,6,7\}}\cong\overline{M}_{0,\{y,4,5,6,7\}}.
\end{equation*}
If $\delta_{I_3}=\delta_{I_4}$, then again $(i^*\delta_{I_3})\cdot(i^*\delta_{I_4})$ is equal to the self-intersection of an F-curve on $S\cong\overline{M}_{0,\{y,4,5,6,7\}}$, which gives $\sigma_{I,J,K}\cdot\sigma_{L,M,N}=-1$. Let us assume that $\delta_{I_3}\neq\delta_{I_4}$. Given $j=3,4$, then $i^*\delta_{I_j}$ is equal to one of the following boundary divisors on $\overline{M}_{0,\{y,4,5,6,7\}}$
\begin{equation*}
\delta_{45},\delta_{46},\delta_{47},\delta_{56},\delta_{57},\delta_{67},\delta_{456},\delta_{457},\delta_{467}~\textrm{or}~\delta_{567}.
\end{equation*}
If $i^*\delta_{I_3}=\delta_{45},\delta_{46},\delta_{47},\delta_{56},\delta_{57}$ or $\delta_{67}$, then assume up to a change of labels that $i^*\delta_{I_3}=\delta_{45}$. In this case, $i^*\delta_{I_4}=\delta_{67},\delta_{456}$ or $\delta_{457}$. If $i^*\delta_{I_3}=\delta_{456},\delta_{457},\delta_{467}$ or $\delta_{567}$, assume up to a change of labels that $i^*\delta_{I_3}=\delta_{456}$. Then $i^*\delta_{I_4}$ has to be equal to $\delta_{45},\delta_{46}$ or $\delta_{56}$. Each one of these choices for $i^*\delta_{I_3}$ and $i^*\delta_{I_4}$ gives $\sigma_{I,J,K}\cdot\sigma_{L,M,N}=1$.
\item[$\bullet$] $|I_1|=2$ and $|I_2|=3$. In this case we have
\begin{equation*}
S\cong\overline{M}_{0,\{1,2,x\}}\times\overline{M}_{0,\{x,3,4,y\}}\times\overline{M}_{0,\{y,5,6,7\}}\cong\overline{M}_{0,\{x,3,4,y\}}\times\overline{M}_{0,\{y,5,6,7\}}.
\end{equation*}
If $\delta_{I_3}=\delta_{I_4}$, then $(i^*\delta_{I_3})\cdot(i^*\delta_{I_4})$ is equal to the self-intersection of an F-curve on $S\cong\overline{M}_{0,\{x,3,4,y\}}\times\overline{M}_{0,\{y,5,6,7\}}$, which gives $\sigma_{I,J,K}\cdot\sigma_{L,M,N}=0$. Now consider the case $\delta_{I_3}\neq\delta_{I_4}$. For $j=3,4$, $i^*\delta_{I_j}$ is equal to the equivalence class of one of the following divisors on $\overline{M}_{0,\{x,3,4,y\}}\times\overline{M}_{0,\{y,5,6,7\}}$
\begin{equation*}
D_{34}\times\overline{M}_{0,\{y,5,6,7\}},\overline{M}_{0,\{x,3,4,y\}}\times D_{56},\overline{M}_{0,\{x,3,4,y\}}\times D_{57}~\textrm{or}~\overline{M}_{0,\{x,3,4,y\}}\times D_{67}.
\end{equation*}
Since $I_3** I_4$, the only possibility for $i^*\delta_{I_3}$ and $i^*\delta_{I_4}$ is to belong to two different rulings of $S$. It follows that $\sigma_{I,J,K}\cdot\sigma_{L,M,N}=1$.
\item[$\bullet$] $|I_1|=3$ and $|I_2|=3$. Then
\begin{equation*}
S\cong\overline{M}_{0,\{1,2,3,x\}}\times\overline{M}_{0,\{x,4,y\}}\times\overline{M}_{0,\{y,5,6,7\}}\cong\overline{M}_{0,\{1,2,3,x\}}\times\overline{M}_{0,\{y,5,6,7\}}.
\end{equation*}
If $\delta_{I_3}=\delta_{I_4}$, then $(i^*\delta_{I_3})\cdot(i^*\delta_{I_4})$ is equal to the self-intersection of an F-curve on $S\cong\overline{M}_{0,\{1,2,3,x\}}\times\overline{M}_{0,\{y,5,6,7\}}$, which gives $\sigma_{I,J,K}\cdot\sigma_{L,M,N}=0$. For the case $\delta_{I_3}\neq\delta_{I_4}$, given $j=3,4$, $i^*\delta_{I_j}$ is equal to the equivalence class of one of the following divisors on $\overline{M}_{0,\{1,2,3,x\}}\times\overline{M}_{0,\{y,5,6,7\}}$
\begin{equation*}
D_{12}\times\overline{M}_{0,\{y,5,6,7\}},D_{13}\times\overline{M}_{0,\{y,5,6,7\}},D_{23}\times\overline{M}_{0,\{y,5,6,7\}},
\end{equation*}
\begin{equation*}
\overline{M}_{0,\{1,2,3,x\}}\times D_{56},\overline{M}_{0,\{1,2,3,x\}}\times D_{57}~\textrm{or}~\overline{M}_{0,\{1,2,3,x\}}\times D_{67}.
\end{equation*}
$I_3** I_4$ implies that $i^*\delta_{I_3}$ and $i^*\delta_{I_4}$ belong to two different rulings of $S$. In particular, $\sigma_{I,J,K}\cdot\sigma_{L,M,N}=1$.
\end{itemize}
At this point, the claimed intersection formula sums up all the considerations we made so far.
\end{proof}
\subsection{Self-intersection of a boundary $2$-stratum}
We want to compute $\sigma_{I,J,K}^2=\delta_I\cdot\delta_K\cdot\delta_I\cdot\delta_K$. The idea is to find an appropriate Keel relation (see \cite[page 569, Theorem 1(2)]{keel}) that allows us to replace $\delta_I$ and reduce the calculation to the previous case.
\begin{proposition}
\label{intersectionsame}
Let $\sigma_{I,J,K}$ be the equivalence class of a boundary $2$-stratum with $|I|\leq|K|$. Then
\begin{displaymath}
\sigma_{I,J,K}^2=\left\{ \begin{array}{ll}
0~&\textrm{if $|I|=2$ and $|J|=1$}\\
2~&\textrm{if $|J|=3$}\\
1~&\textrm{otherwise}.
\end{array} \right.
\end{displaymath}
\end{proposition}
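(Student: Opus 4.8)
The plan is to reduce the self-intersection of this codimension-$2$ cycle to an intersection of two \emph{distinct} boundary $2$-strata, which is already computed in Proposition~\ref{intersectiondistinct}, by perturbing one of the two copies of $\sigma_{I,J,K}$ through a Keel relation. First I would pin down the cases. Since $|I|+|J|+|K|=7$ and we assume $|I|\le|K|$, the constraints $2\le|I|\le4$, $1\le|J|\le3$, $2\le|K|\le4$ leave exactly the four triples $(|I|,|J|,|K|)\in\{(2,1,4),(2,2,3),(2,3,2),(3,1,3)\}$, which are precisely the inputs of the three branches of the claimed formula (values $0,1,2,1$ respectively).

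The heart of the argument is to rewrite \emph{one} of the two factors $\delta_I$ in $\sigma_{I,J,K}^2=\delta_I\cdot\delta_K\cdot\delta_I\cdot\delta_K$. I would choose markings $i,j\in I$ and $k,l\in I^c=J\cup K$, and invoke Keel's four-point relation
\[
\sum_{\substack{i,j\in A\\k,l\in A^c}}\delta_A=\sum_{\substack{i,k\in A\\j,l\in A^c}}\delta_A
\]
(see \cite[page 569, Theorem 1(2)]{keel}). Since $\delta_I$ occurs on the left-hand side, solving for it expresses $\delta_I$ as a signed sum $\sum_A\varepsilon_A\,\delta_A$ of boundary divisors with $A\neq I$. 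Substituting and using the projection formula gives
\[
\sigma_{I,J,K}^2=\sum_A\varepsilon_A\,\delta_I\cdot\delta_K\cdot\delta_A\cdot\delta_K=\sum_A\varepsilon_A\,\sigma_{I,J,K}\cdot(\delta_A\cdot\delta_K),
\]
so each summand is an intersection of $\sigma_{I,J,K}$ with the class $\delta_A\cdot\delta_K$.

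Next I would discard the vanishing terms. A summand is zero unless $A**I$ and $A**K$: otherwise $D_I\cap D_A=\emptyset$ or $D_A\cap D_K=\emptyset$ by \cite[Fact 4]{keel}, and the $0$-cycle is empty. For the surviving $A$ the class $\delta_A\cdot\delta_K$ is an honest boundary $2$-stratum, and the choice of $k,l$ can be arranged so that no $\delta_A$ equals $\delta_I$ or $\delta_K$ — concretely, take $k,l\in J$ when $|J|\ge2$, and one marking in $J$ and one in $K$ when $|J|=1$. This guarantees that $\delta_A\cdot\delta_K$ is distinct from $\sigma_{I,J,K}$ (so the reduction does not loop back to the self-intersection) and that only a handful of $A$ survive. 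For each surviving $A$ the term $\sigma_{I,J,K}\cdot(\delta_A\cdot\delta_K)$ is put into the normal form of Lemma~\ref{normalformintersection} and evaluated by Proposition~\ref{intersectiondistinct}.

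Summing the surviving contributions then yields the stated values. In each case every nonvanishing summand falls in the first branch of Proposition~\ref{intersectiondistinct}: after complementation $\delta_A=\delta_{A^c}$ one finds a repeated divisor $\delta_{I_3}=\delta_{I_4}$ together with $|I_1|=2$ and $|I_2|\in\{2,4\}$, so the term equals $-1$; moreover it always arises among the terms subtracted when solving the Keel relation for $\delta_I$, hence carries a minus sign and contributes $+1$. The number of such summands turns out to be $0,1,2,1$ for $(|I|,|J|,|K|)=(2,1,4),(2,2,3),(2,3,2),(3,1,3)$ respectively, which is exactly the asserted value. I expect the main obstacle to be purely the bookkeeping: for each of the four cases one must enumerate the $A$ satisfying both $A**I$ and $A**K$, carry out the complementation needed to reach the normal form with $I_1\cap I_2=\emptyset$ and $|I_1|\le|I_2|$, and read off the correct branch of Proposition~\ref{intersectiondistinct}; one must also verify at the outset that the chosen four markings never force $\delta_A$ to coincide with $\delta_I$ or $\delta_K$, which is precisely what makes the substitution a genuine reduction rather than a circular identity.
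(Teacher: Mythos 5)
Your proposal is correct and follows essentially the same route as the paper's proof: solve a Keel four-point relation for one copy of $\delta_I$ (with $i,j\in I$ and the other two markings chosen in $J\cup K$ so that no surviving term reproduces $\delta_I$ or $\delta_K$), discard terms via the $**$ criterion, and evaluate the remaining products with Lemma~\ref{normalformintersection} and Proposition~\ref{intersectiondistinct}. The only difference is cosmetic --- the paper always takes one of the two auxiliary markings in $J$ and one in $K$, whereas you take both in $J$ when $|J|\geq 2$ --- and your predicted counts of nonvanishing contributions ($0,1,2,1$ in the four cases) match the paper's computation.
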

\begin{proof}
Up to relabeling the markings, it is enough to prove that $\sigma_{12,3,4567}^2=0$, $\sigma_{123,4,567}^2=\sigma_{12,34,567}^2=1$ and $\sigma_{12,345,67}^2=2$.
\begin{itemize}
\item[$\bullet$] $\sigma_{12,3,4567}^2=\delta_{12}\cdot\delta_{4567}\cdot\delta_{12}\cdot\delta_{4567}$. Let us use the boundary relation
\begin{equation*}
\sum_{\substack{1,2\in S\\3,4\in S^c}}\delta_S=\sum_{\substack{1,3\in S\\2,4\in S^c}}\delta_S\Rightarrow\delta_{12}=\delta_{13}+\delta_{135}+\delta_{136}+\delta_{137}+\delta_{1356}+\delta_{1357}+
\end{equation*}
\begin{equation*}
+\delta_{1367}+\delta_{13567}-\delta_{125}-\delta_{126}-\delta_{127}-\delta_{1256}-\delta_{1257}-\delta_{1267}-\delta_{12567}.
\end{equation*}
But now, if $\delta_T$ is one of the boundary divisors that appear in the expression we just found for $\delta_{12}$, then $\{1,2\}**T$ is false or $\{4,5,6,7\}**T$ is false. Hence, $\sigma_{12,3,4567}^2=0$.
\item[$\bullet$] $\sigma_{123,4,567}^2=\delta_{123}\cdot\delta_{567}\cdot\delta_{123}\cdot\delta_{567}$. Consider
\begin{equation*}
\sum_{\substack{1,2\in S\\4,5\in S^c}}\delta_S=\sum_{\substack{1,4\in S\\2,5\in S^c}}\delta_S\Rightarrow\delta_{123}=\delta_{14}+\delta_{143}+\delta_{146}+\delta_{147}+\delta_{1436}+\delta_{1437}+
\end{equation*}
\begin{equation*}
+\delta_{1467}+\delta_{14367}-\delta_{12}-\delta_{126}-\delta_{127}-\delta_{1236}-\delta_{1237}-\delta_{1267}-\delta_{12367}.
\end{equation*}
After replacing $\delta_{123}$ with the new expression and distributing, we get $\sigma_{123,4,567}^2=-\delta_{123}\cdot\delta_{567}\cdot\delta_{12}\cdot\delta_{567}=-\delta_{12}\cdot\delta_{4567}\cdot\delta_{567}\cdot\delta_{567}=-(-1)=1$.
\item[$\bullet$] $\sigma_{12,34,567}^2=\delta_{12}\cdot\delta_{567}\cdot\delta_{12}\cdot\delta_{567}$. We use the following boundary relation
\begin{equation*}
\sum_{\substack{1,2\in S\\3,5\in S^c}}\delta_S=\sum_{\substack{1,3\in S\\2,5\in S^c}}\delta_S\Rightarrow\delta_{12}=\delta_{13}+\delta_{134}+\delta_{136}+\delta_{137}+\delta_{1346}+\delta_{1347}+
\end{equation*}
\begin{equation*}
+\delta_{1367}+\delta_{13467}-\delta_{124}-\delta_{126}-\delta_{127}-\delta_{1246}-\delta_{1247}-\delta_{1267}-\delta_{12467}\Rightarrow
\end{equation*}
\begin{equation*}
\sigma_{12,34,567}^2=-\delta_{12}\cdot\delta_{567}\cdot\delta_{124}\cdot\delta_{567}=-\delta_{12}\cdot\delta_{3567}\cdot\delta_{567}\cdot\delta_{567}=1.
\end{equation*}
\item[$\bullet$] $\sigma_{12,345,67}^2=\delta_{12}\cdot\delta_{67}\cdot\delta_{12}\cdot\delta_{67}$.
\begin{equation*}
\sum_{\substack{1,2\in S\\3,6\in S^c}}\delta_S=\sum_{\substack{1,3\in S\\2,6\in S^c}}\delta_S\Rightarrow\delta_{12}=\delta_{13}+\delta_{134}+\delta_{135}+\delta_{137}+\delta_{1345}+\delta_{1347}+
\end{equation*}
\begin{equation*}
+\delta_{1357}+\delta_{13457}-\delta_{124}-\delta_{125}-\delta_{127}-\delta_{1245}-\delta_{1247}-\delta_{1257}-\delta_{12457}\Rightarrow
\end{equation*}
\begin{equation*}
\sigma_{12,345,67}^2=-\delta_{12}\cdot\delta_{67}\cdot\delta_{124}\cdot\delta_{67}-\delta_{12}\cdot\delta_{67}\cdot\delta_{125}\cdot\delta_{67}-\delta_{12}\cdot\delta_{67}\cdot\delta_{1245}\cdot\delta_{67}=
\end{equation*}
\begin{equation*}
=-\delta_{12}\cdot\delta_{3567}\cdot\delta_{67}\cdot\delta_{67}-\delta_{12}\cdot\delta_{3467}\cdot\delta_{67}\cdot\delta_{67}-\delta_{12}\cdot\delta_{367}\cdot\delta_{67}\cdot\delta_{67}=1+1-0=2.
\end{equation*}
\end{itemize}
\end{proof}
\begin{remark}
As one of the referees pointed out, Proposition~\ref{intersectionsame} can also be proved using \cite[Lemma 3.5]{edidin}. Say we want to compute $\sigma_{I,J,K}^2$. Then, adopting the same notation used in \cite[Lemma 3.5]{edidin}, one can take $B=s_{I,J,K}$ and let $X\rightarrow B$ be the pullback of the universal family on $\overline{M}_{0,7}$ with respect to the inclusion $s_{I,J,K}\hookrightarrow\overline{M}_{0,7}$. Then the intersection number $\sigma_{I,J,K}^2$ can be computed using the formula provided at the end of \cite[Lemma 3.5]{edidin}.
\end{remark}
\subsection{Equivalence classes of boundary $2$-strata}
\label{studynumericalequivalence}
So far, we considered set theoretically distinct boundary $2$-strata. However, we are interested in studying distinct equivalence classes of boundary $2$-strata.
\begin{proposition}
\label{equivalenceclassesmadeclear}
Consider $\sigma_{I,J,K}$ and $\sigma_{L,M,N}$ with $|I|\leq|K|$, $|L|\leq|N|$ and $s_{I,J,K}\neq s_{L,M,N}$. Then $\sigma_{I,J,K}=\sigma_{L,M,N}\Leftrightarrow I\cup J=L\cup M$ and $|I\cup J|=3$.
\end{proposition}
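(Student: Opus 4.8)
\emph{Plan.} The plan is to prove the two implications separately, using throughout that $\sigma_{I,J,K}=\delta_I\cdot\delta_K$ and that numerical equality of two classes amounts to equality of all their intersection numbers (numerical, rational and algebraic equivalence agreeing here).

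For the implication $(\Leftarrow)$ I will argue geometrically. If $|I\cup J|=3$ then, since $|I|\ge2$ and $|J|\ge1$, necessarily $|I|=2$, $|J|=1$ and $|K|=4$; writing $T:=I\cup J$ we get $K=T^c$ and $\delta_K=\delta_T$. I will restrict $\delta_I$ to $D_T$ along $i\colon D_T\hookrightarrow\overline{M}_{0,7}$: because $|T|=3$ we have $D_T\cong\overline{M}_{0,4}\times\overline{M}_{0,5}$, and $i^*\delta_I$ is the class $\{p_I\}\times\overline{M}_{0,5}$, where $p_I$ is the boundary point of $\overline{M}_{0,4}\cong\mathbb{P}^1$ cutting off the pair $I$. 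Since any two points of $\mathbb{P}^1$ are numerically equivalent, the class $\{p_I\}\times\overline{M}_{0,5}$ in $N_2(D_T)$ does not depend on the chosen pair $I\subset T$, and neither does its pushforward $\sigma_{I,J,K}=i_*(i^*\delta_I)$. Hence $L\cup M=I\cup J$ forces $\sigma_{L,M,N}=\sigma_{I,J,K}$.

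For $(\Rightarrow)$ I will assume $\sigma_{I,J,K}=\sigma_{L,M,N}$. Pairing this identity with each of the two classes yields $\sigma_{I,J,K}^2=\sigma_{I,J,K}\cdot\sigma_{L,M,N}=\sigma_{L,M,N}^2$. By Proposition~\ref{intersectionsame} the self-intersection equals $0$ exactly for type $(2,1,4)$, equals $2$ exactly for type $(2,3,2)$, and equals $1$ for both types $(2,2,3)$ and $(3,1,3)$; so matching self-intersections already forces the two strata to share a self-intersection value, i.e. both are $(2,1,4)$, both are $(2,3,2)$, or both lie in $\{(2,2,3),(3,1,3)\}$. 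In each case I will then test the identity against well-chosen boundary $2$-strata $\gamma$, evaluating $\sigma_{I,J,K}\cdot\gamma$ and $\sigma_{L,M,N}\cdot\gamma$ via Proposition~\ref{intersectiondistinct}. A probe $\gamma$ with $s_{I,J,K}**\gamma$ but not $s_{L,M,N}**\gamma$ (or meeting the two with opposite signs) separates their classes; for example, for two type $(2,1,4)$ strata with $T=I\cup J\neq L\cup M=T'$ one exhibits a $\gamma$ compatible with $D_T$ but incompatible with $D_{T'}$, giving intersection numbers $-1$ and $0$. Running this for each self-intersection value shows that outside type $(2,1,4)$ distinct strata are always numerically distinct (in particular a $(2,2,3)$-stratum never equals a $(3,1,3)$-stratum), contradicting $s_{I,J,K}\neq s_{L,M,N}$, while inside type $(2,1,4)$ the class determines $T=I\cup J$. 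Combining the cases gives $I\cup J=L\cup M$ with $|I\cup J|=3$.

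The main obstacle is the systematic bookkeeping in the last paragraph: for every self-intersection value one must produce, for each pair of distinct strata, an explicit probe on which the two intersection numbers disagree, and in particular separate the two self-intersection-$1$ types $(2,2,3)$ and $(3,1,3)$, which the self-intersection alone cannot tell apart. The geometric description of the relevant $2$-strata as $\overline{M}_{0,5}$ or $\mathbb{P}^1\times\mathbb{P}^1$ from the proof of Proposition~\ref{intersectiondistinct}, together with its explicit sign rule, is what keeps these computations finite and tractable.
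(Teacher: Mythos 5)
Your proposal is correct and follows essentially the same route as the paper: the $(\Leftarrow)$ direction realizes the strata with $|I\cup J|=3$ as fibers (point times $\overline{M}_{0,5}$) inside $D_{I\cup J}\cong\mathbb{P}^1\times\overline{M}_{0,5}$, and the $(\Rightarrow)$ direction is a case analysis by type that separates classes by pairing against boundary $2$-strata via Propositions~\ref{intersectiondistinct} and~\ref{intersectionsame} (the paper's cases (i)--(iii) are exactly your ``pair with itself'' step, and its case (iv) is your external-probe step for type $(2,1,4)$, with a probe giving $1$ vs.\ $0$ where you propose $-1$ vs.\ $0$). The remaining bookkeeping you defer is precisely what the paper carries out explicitly, so there is no gap in the strategy.
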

\begin{proof}

\

\noindent($\Leftarrow$) Assume $\{a,b,c,d,e,f,g\}=[7]$ and let $I\cup J=\{a,b,c\}$.
Consider the boundary divisor $D_{abc,defg}\cong\overline{M}_{0,4}\times\overline{M}_{0,5}\cong\mathbb{P}^1\times\overline{M}_{0,5}$. Let $\pi\colon\mathbb{P}^1\times\overline{M}_{0,5}\rightarrow\mathbb{P}^1$ be the usual projection morphism. If $C$ is the stable $7$-pointed rational curve corresponding to the generic point of $D_{abc}$, assume the node of $C$ and the labels $b,c$ fixed on the twig which contains $a,b$ and $c$. So we can think of $a$ as parametrizing $\mathbb{P}^1$, and therefore $\pi^{-1}(b)=s_{ab,c,defg}$, $\pi^{-1}(c)=s_{ac,b,defg}$. In conclusion, $s_{ab,c,defg}$ and $s_{ac,b,defg}$ are rationally equivalent.

\

\noindent($\Rightarrow$) Let us prove the contrapositive. We proceed by enumerating all the possible cases.
\begin{itemize}
\item[(i)] $|J|=3$. Then $2=\sigma_{I,J,K}\cdot\sigma_{I,J,K}\neq\sigma_{L,M,N}\cdot\sigma_{I,J,K}\in\{-1,0,1\}\Rightarrow\sigma_{I,J,K}\neq\sigma_{L,M,N}$.

\item[(ii)] $|I|=|J|=2$. Up to relabeling, we can assume that $\sigma_{I,J,K}=\sigma_{12,34,567}$. The boundary $2$-stratum $\sigma_{L,M,N}$ can be in one of the following forms
\begin{equation*}
\sigma_{ab,cd,efg},~\sigma_{abc,d,efg}~\textrm{or}~\sigma_{ab,c,defg}
\end{equation*}
($\sigma_{ab,cde,fg}$ is excluded because of what we just discussed in (i)). In any case, we can write $\sigma_{L,M,N}=\delta_S\cdot\delta_T$ with $|S|=4$. Therefore, $\sigma_{12,34,567}\cdot\sigma_{L,M,N}=\delta_{12}\cdot\delta_{1234}\cdot\delta_{S}\cdot\delta_T$ can be equal to just $0$ or $-1$ (more in detail, if $S**\{1234\}$, then $S=\{1,2,3,4\}$ and the intersection can be either $0$ or $-1$). However $\sigma_{12,34,567}\cdot\sigma_{12,34,567}=1$, so $\sigma_{12,34,567}\neq\sigma_{L,M,N}$.

\item[(iii)] $|J|=1$ and $|I|=3$. This case uses the same strategy we adopted in (ii).

\item[(iv)] $|J|=1$ and $|I|=2$. We can assume $\sigma_{I,J,K}=\sigma_{12,3,4567}$. Because of what we proved so far, we can assume that $s_{L,M,N}=s_{ab,c,defg}$. By our hypotheses, we also have that $s_{ab,c,defg}$ has to be different from $s_{13,2,4567}$ and $s_{23,1,4567}$. But now, up to permuting $\{4,5,6,7\}$ and $\{1,2\}$ (which leave $\sigma_{12,3,4567}$ unchanged), there are few possibilities for $\sigma_{ab,c,defg}$, which are
\begin{equation*}
\sigma_{12,4,3567},~\sigma_{13,4,2567},~\sigma_{14,2,3567},~\sigma_{14,3,2567},~\sigma_{14,5,2367},
\end{equation*}
\begin{equation*}
\sigma_{34,1,2567},~\sigma_{34,5,1267},~\sigma_{45,1,2367},~\sigma_{45,3,1267},~\sigma_{45,6,1237}.
\end{equation*}
In each case, one can compute that $\sigma_{ab,c,defg}\cdot\sigma_{45,123,67}=0$ using Proposition~\ref{intersectiondistinct}. But $\sigma_{12,3,4567}\cdot\sigma_{45,123,67}=1$ again by Proposition~\ref{intersectiondistinct}, and therefore $\sigma_{12,3,4567}\neq\sigma_{ab,c,defg}$.
\end{itemize}
\end{proof}
Now, an easy count tells us that there are $420$ distinct equivalence classes of boundary $2$-strata on $\overline{M}_{0,7}$. In addition, these $420$ equivalence classes generate distinct rays in $\eff_2(\overline{M}_{0,7})$ as we prove in the next proposition.
\begin{proposition}
\label{distinctrays}
Distinct equivalence classes of boundary $2$-strata on $\overline{M}_{0,7}$ generate distinct rays in the cone $\emph{Eff}_2(\overline{M}_{0,7})$.
\end{proposition}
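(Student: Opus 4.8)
The plan is to show that no two distinct classes $\sigma_{I,J,K}\neq\sigma_{L,M,N}$ span the same ray, i.e.\ that an equality $\sigma_{I,J,K}=\lambda\,\sigma_{L,M,N}$ with $\lambda\in\mathbb{R}_{>0}$ forces $\sigma_{I,J,K}=\sigma_{L,M,N}$, contradicting distinctness. The only inputs I will use are the intersection formulas of Proposition~\ref{intersectiondistinct} and Proposition~\ref{intersectionsame}, the fact that all intersection numbers of boundary $2$-strata are integers, and the resulting classification of self-intersections: every $\sigma_{I,J,K}^2$ lies in $\{0,1,2\}$, being $0$ exactly when $|I|=2,|J|=1$ (these are the classes indexed by $3$-subsets of $[7]$ via Proposition~\ref{equivalenceclassesmadeclear}), equal to $2$ exactly when $|J|=3$, and equal to $1$ otherwise.

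The first step disposes of every pair in which at least one class has nonzero self-intersection. Since the relation $\sigma_{I,J,K}=\lambda\,\sigma_{L,M,N}$ is symmetric, I may assume this class is $\sigma':=\sigma_{L,M,N}$, so $\sigma'^2\in\{1,2\}$. Intersecting $\sigma_{I,J,K}=\lambda\sigma'$ with $\sigma'$ gives $\lambda=(\sigma_{I,J,K}\cdot\sigma')/\sigma'^2$, which lies in $\mathbb{Z}$ when $\sigma'^2=1$ and in $\tfrac12\mathbb{Z}$ when $\sigma'^2=2$; squaring the proportionality gives $\sigma_{I,J,K}^2=\lambda^2\sigma'^2$. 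Running through the finitely many admissible pairs $(\sigma_{I,J,K}^2,\sigma'^2)$ with $\sigma'^2\in\{1,2\}$ and $\sigma_{I,J,K}^2\in\{0,1,2\}$, the only possibility consistent with $\lambda>0$ and the integrality constraint on $\lambda$ is $\lambda=1$: for $\sigma'^2=1$ we get $\lambda\in\mathbb{Z}_{>0}$ with $\lambda^2\le2$, hence $\lambda=1$, and the case $\sigma'^2=2$ (where $\lambda=k/2$ with $k^2=2\sigma_{I,J,K}^2$ forces $k=2$) is analogous. Thus $\sigma_{I,J,K}=\sigma'$, contradicting distinctness.

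It remains to treat the case where both classes have self-intersection $0$, i.e.\ both are indexed by a $3$-subset: write $\sigma_{I,J,K}=\sigma_T$ and $\sigma_{L,M,N}=\sigma_U$, where $T=I\cup J$ and $U=L\cup M$ are distinct $3$-subsets and $\sigma_T$ depends only on $T$ by Proposition~\ref{equivalenceclassesmadeclear}. Here self-intersections carry no information, so I separate $\sigma_T$ from $\sigma_U$ by a test class. For $T=\{x,y,z\}$ with complement $\{p,q,r,s\}$, consider the boundary $2$-stratum $\tau_T:=s_{pq,xyz,rs}$ (which has $|J|=3$, so $\tau_T^2=2$). A direct computation with Proposition~\ref{intersectiondistinct}—most transparently by restricting to $S\cong\overline{M}_{0,5}$ and intersecting two boundary divisors there, exactly as in the proof of that proposition—shows $\sigma_T\cdot\tau_T=1$, whereas for every $3$-subset $U\neq T$ the condition $s_{ab,c,U^c}\ast\ast\,\tau_T$ fails, so that $\sigma_U\cdot\tau_T=0$. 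Then $\sigma_T=\lambda\sigma_U$ would give $1=\sigma_T\cdot\tau_T=\lambda(\sigma_U\cdot\tau_T)=0$, a contradiction. Hence no two distinct classes are proportional, and the $420$ classes span distinct rays.

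The arithmetic of the first step is entirely routine; the real content, and the step I expect to demand the most care, is the verification that $\tau_T$ detects $T$ and annihilates every other $3$-subset $U$. This reduces to a finite compatibility check, organized by $|T\cap U|\in\{0,1,2\}$ and by the position of the symbols of $U$ relative to the chosen pairing $\{p,q\}\amalg\{r,s\}$ of the complement of $T$: in each case one shows that the four divisors $\delta_{ab},\delta_{U^c},\delta_{pq},\delta_{rs}$ fail one of the relations $\ast\ast$ of Proposition~\ref{intersectiondistinct}. Choosing for $\sigma_U$ a representative whose first index is disjoint from $\{p,q\}$ and from $\{r,s\}$ keeps the bookkeeping clean, and since the intersection number is independent of the representative, a single such computation settles each case.
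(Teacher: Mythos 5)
Your proof is correct and takes essentially the same route as the paper: the self-intersection numbers of Proposition~\ref{intersectionsame} dispose of every pair in which at least one class has $\sigma^2\neq 0$, and the type-$(2,1,4)$ classes are separated by intersecting with a stratum having $|J|=3$ (the paper merely invokes the existence of some boundary $2$-stratum $\gamma$ with $\alpha\cdot\gamma\neq 0$ and the bound $|\beta\cdot\gamma|\leq 1$, whereas you exhibit the test class $\tau_T$ explicitly and verify $\sigma_T\cdot\tau_T=1$ and $\sigma_U\cdot\tau_T=0$ for all $U\neq T$, which is a slightly more self-contained version of the same step). One small correction to your final check: when $|T\cap U|=1$ and $U$ contains the pair $\{p,q\}$, the representative $s_{pq,x,U^c}$ \emph{does} satisfy the relation $\ast\ast$ with $\tau_T$ (and no representative of $\sigma_U$ has first index disjoint from both $\{p,q\}$ and $\{r,s\}$ in that case); the vanishing $\sigma_U\cdot\tau_T=0$ there follows instead from the ``otherwise'' clause of Proposition~\ref{intersectiondistinct}, so the conclusion stands.
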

\begin{proof}
We say that a boundary $2$-stratum $\sigma_{I,J,K}$ is of type $(a,b,c)$ if $\{a,b,c\}=\{|I|,|J|,|K|\}$. Let $\alpha$ and $\beta$ be two distinct boundary $2$-strata on $\overline{M}_{0,7}$. Assume by contradiction that we can find $r\in\mathbb{R}_{>0}$, $r\neq1$, such that $\alpha=r\beta$.

There are three cases to discuss.
\begin{itemize}
\item[$\bullet$] $\alpha$ and $\beta$ are not of type $(2,1,4)$. Then $\alpha^2=r^2\beta^2\neq0$ by Proposition~\ref{intersectionsame}, so that $r=\sqrt{\alpha^2/\beta^2}$. Considering all the possible cases for $\alpha^2$ and $\beta^2$, we see that $r\in\{1/\sqrt{2},\sqrt{2}\}$, which cannot be because $r$ has to be rational.
\item[$\bullet$] Exactly one among $\alpha$ and $\beta$ is of type $(2,1,4)$. This is impossible because one side of the equality $\alpha^2=r^2\beta^2$ would be zero and the other not.
\item[$\bullet$] Both $\alpha$ and $\beta$ are of type $(2,1,4)$. Since $\alpha\neq0$, we can find a boundary $2$-stratum $\gamma$ such that $\alpha\cdot\gamma\neq0$. According to Propositions~\ref{intersectiondistinct} and \ref{intersectionsame}, we must have that $|\alpha\cdot\gamma|=1$ and $|\beta\cdot\gamma|\in\{0,1\}$. In any case, the equality $|\alpha\cdot\gamma|=r|\beta\cdot\gamma|$ gives a contradiction.
\end{itemize}
\end{proof}
Recent work of Chen and Coskun (see \cite{chencoskun}) shows that the $420$ equivalence classes of boundary $2$-strata on $\overline{M}_{0,7}$ generate extremal rays of $\eff_2(\overline{M}_{0,7})$.
\begin{theorem}[{{\cite[Theorem 6.1]{chencoskun}}}]
Equivalence classes of boundary strata of codimension $2$ on $\overline{M}_{0,n}$ are extremal in $\emph{Eff}^2(\overline{M}_{0,n})$.
\end{theorem}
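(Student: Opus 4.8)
The plan is to certify extremality of each codimension-$2$ boundary stratum $B$ by exhibiting a supporting hyperplane of the pseudoeffective cone $\peff^2(\overline{M}_{0,n})$ whose contact locus is exactly the ray $\mathbb{R}_{\geq0}[B]$. Concretely, I would search for a class $\eta$ in the dual cone $\peff^2(\overline{M}_{0,n})^\vee$ with $\eta\cdot[B]=0$ while $\eta\cdot\alpha>0$ for every effective codimension-$2$ cycle $\alpha$ not proportional to $[B]$. Once such an $\eta$ is in hand, extremality is immediate: any decomposition $[B]=\alpha_1+\alpha_2$ into effective classes forces $\eta\cdot\alpha_1=\eta\cdot\alpha_2=0$, whence both $\alpha_i\in\mathbb{R}_{\geq0}[B]$.

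To build $\eta$ I would exploit the product structure of $B$. As recalled in Section~\ref{preliminaries}, $B$ is the complete intersection of the boundary divisors containing it, so $B\cong\overline{M}_{0,n_1}\times\overline{M}_{0,n_2}\times\overline{M}_{0,n_3}$; in particular $B$ sits as a boundary divisor inside a codimension-$1$ stratum $D\cong\overline{M}_{0,a}\times\overline{M}_{0,b}$ via a closed embedding $i\colon D\hookrightarrow\overline{M}_{0,n}$. Since extremality of boundary divisors (the codimension-$1$ case) is classical, I would try to run an induction on $n$ and on the codimension, reducing the codimension-$2$ statement to the codimension-$1$ statement on the factors. The candidate $\eta$ should be assembled from pullbacks of nef classes under the forgetful and reduction (Kapranov-type) morphisms of $\overline{M}_{0,n}$, arranged so that the relevant morphism contracts $B$ and thereby kills $\eta\cdot[B]$; the negative pairwise intersection numbers computed in Proposition~\ref{intersectiondistinct} are precisely the kind of rigidity one expects to single out $B$ as an isolated point of the contact locus.

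The hard part will be proving positivity of $\eta$ against \emph{all} effective codimension-$2$ cycles, not merely the boundary ones. Indeed the entire thrust of this paper is that $\eff_2(\overline{M}_{0,7})$ contains classes lying far outside $V_2(\overline{M}_{0,7})$, so the certificate cannot be checked by enumerating cycles and must instead come from a genuinely global positivity statement for $\eta$. A second delicate point is the transfer of extremality along the embedding $i\colon D\hookrightarrow\overline{M}_{0,n}$: extremality of $[B]$ inside $\peff^1(D)$ does not formally imply extremality of $i_*[B]$ inside $\peff^2(\overline{M}_{0,n})$, since pushforward along a closed embedding need not preserve extremal rays. Constructing $\eta$ explicitly and establishing its nonnegativity on the full effective cone is where the real content lies, which is why I would ultimately invoke the result of Chen and Coskun rather than reprove it from scratch here.
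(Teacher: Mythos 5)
The paper does not prove this statement at all: it is imported verbatim as \cite[Theorem 6.1]{chencoskun}, and your proposal, after its sketch, likewise ends by ``invoking the result of Chen and Coskun rather than reprove it.'' So as far as what is actually established goes, your proof and the paper's coincide --- both consist of the citation. The honest assessment is therefore that your text is a strategy outline with the decisive steps left open, not a proof.

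As a standalone argument the sketch has a genuine gap, and moreover the certificate you are asking for is stronger than what is needed. You want a dual class $\eta\in\peff^2(\overline{M}_{0,n})^\vee$ with $\eta\cdot[B]=0$ and $\eta\cdot\alpha>0$ for every effective $\alpha$ not proportional to $[B]$; that would prove that the ray $\mathbb{R}_{\geq0}[B]$ is \emph{exposed}, and there is no a priori reason every extremal ray of $\peff^2$ is exposed, so the object you propose to search for may simply not exist even when the theorem is true. You never construct $\eta$, and you correctly identify that positivity against all (not just boundary) effective classes is the entire content --- which is to say the proof is missing. The machinery that is actually available, both in Chen--Coskun and in this paper, is of a different shape: Proposition~\ref{usethisforextremality} transfers extremality along a morphism $\gamma\colon Y\rightarrow X$ by means of an auxiliary morphism $f\colon X\rightarrow W$ whose exceptional locus contains $\gamma(Y)$ and which drops the dimension of the cycle; this is exactly how the Lifting Lemma, part (ii), handles your second ``delicate point'' (that pushforward along a closed embedding need not preserve extremality), using the Hassett reduction morphism $f_{\mathcal{A}}$ to contract $D_{n,n+1}$. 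A reduction of the codimension-$2$ statement to the classical codimension-$1$ statement should be run through that contraction mechanism rather than through a supporting-hyperplane computation.
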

To conclude, the next corollary completely describes the cone $V_2(\overline{M}_{0,7})$ and sums up what we know about $\eff_2(\overline{M}_{0,7})$ so far.
\begin{corollary}
\label{IseeV2}
The cone $\emph{Eff}_2(\overline{M}_{0,7})$ has at least $420$ extremal rays, which are generated by the distinct equivalence classes of the boundary $2$-strata on $\overline{M}_{0,7}$. In particular, the closed cone $V_2(\overline{M}_{0,7})$ has exactly $420$ extremal rays.
\end{corollary}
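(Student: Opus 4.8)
The plan is to assemble the corollary directly from the results already established in this section, so the argument will be short and essentially bookkeeping rather than new computation. First I would record the count: by Proposition~\ref{equivalenceclassesmadeclear} together with the elementary enumeration carried out just before Proposition~\ref{distinctrays}, there are exactly $420$ distinct numerical equivalence classes of boundary $2$-strata on $\overline{M}_{0,7}$, and by Proposition~\ref{distinctrays} these $420$ classes span $420$ pairwise distinct rays in $\eff_2(\overline{M}_{0,7})$.

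Next I would invoke \cite[Theorem 6.1]{chencoskun} to upgrade ``distinct rays'' to ``distinct extremal rays''. The one point to verify is the dictionary between that statement and ours: since $\overline{M}_{0,7}$ has dimension $n-3=4$, a cycle of dimension $2$ is the same as a cycle of codimension $2$, and the boundary $2$-strata are precisely the boundary strata of codimension $2$. Hence the Chen--Coskun result asserts that each of our $420$ classes is extremal in $\eff_2(\overline{M}_{0,7})$. This already yields the first assertion, that $\eff_2(\overline{M}_{0,7})$ has at least $420$ extremal rays; the qualifier ``at least'' is deliberate, since the later sections of the paper produce further extremal rays (the Keel--Vermeire lifts and the special hypertree surfaces) lying outside $V_2(\overline{M}_{0,7})$.

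For the second assertion I would show that these same $420$ rays are exactly the extremal rays of the subcone $V_2(\overline{M}_{0,7})$. Because $V_2(\overline{M}_{0,7})$ is generated by finitely many classes it is polyhedral, hence closed, and its extremal rays form a subset of any generating set, so there are at most $420$. Conversely I would use the elementary transfer principle that a ray $R$ which is extremal in an ambient cone and is contained in a subcone remains extremal in the subcone: any way of writing a point of $R$ as a sum of two classes of $V_2(\overline{M}_{0,7})\subseteq\eff_2(\overline{M}_{0,7})$ is in particular a decomposition inside $\eff_2(\overline{M}_{0,7})$, which by extremality must lie along $R$. Applying this to each of the $420$ generators shows all of them are extremal in $V_2(\overline{M}_{0,7})$, giving at least $420$ extremal rays, and combined with the previous bound we conclude there are exactly $420$.

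I do not anticipate a genuine obstacle, since all the substantive input is already in hand: the intersection formulas of Propositions~\ref{intersectiondistinct} and~\ref{intersectionsame}, the equivalence classification of Proposition~\ref{equivalenceclassesmadeclear}, the ray-distinctness of Proposition~\ref{distinctrays}, and the external extremality result of Chen and Coskun. The only care required is the codimension/dimension translation needed to apply \cite[Theorem 6.1]{chencoskun} and the ``extremal in a cone stays extremal in a subcone'' observation used to descend from $\eff_2(\overline{M}_{0,7})$ to $V_2(\overline{M}_{0,7})$.
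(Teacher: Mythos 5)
Your proposal is correct and follows the same route the paper takes (the paper states this corollary without a separate proof, as a direct summary of Proposition~\ref{equivalenceclassesmadeclear}, Proposition~\ref{distinctrays}, and the Chen--Coskun theorem). The two bookkeeping points you flag --- the dimension/codimension translation in the $4$-dimensional space $\overline{M}_{0,7}$, and the descent of extremality from $\eff_2(\overline{M}_{0,7})$ to the polyhedral subcone $V_2(\overline{M}_{0,7})$ --- are exactly the right ones and are handled correctly.
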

\subsection{The intersection form $N_2(\overline{M}_{0,7})\times N_2(\overline{M}_{0,7})\rightarrow\mathbb{R}$}
\label{studyofthebilinearform}
The real vector space $N_2(\overline{M}_{0,7})$ is equipped with a symmetric bilinear form $Q\colon N_2(\overline{M}_{0,7})\times N_2(\overline{M}_{0,7})\rightarrow\mathbb{R}$ given by the intersection between equivalence classes of $2$-cycles. Since $Q$ is nondegenerate, then $Q$ has rank equal to $\dim_{\mathbb{R}}N_2(\overline{M}_{0,7})$.
\begin{proposition}
\label{dimension127}
$\dim_{\mathbb{R}}N_2(\overline{M}_{0,7})=127$.
\end{proposition}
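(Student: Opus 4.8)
The plan is to exploit the fact, recorded just above, that the intersection form $Q$ on $N_2(\overline{M}_{0,7})$ is nondegenerate. This nondegeneracy is automatic: on a smooth projective fourfold, numerical equivalence of $2$-cycles is defined precisely by the pairing of $2$-cycles against $2$-cycles, so by construction the induced symmetric form $Q$ on $N_2(\overline{M}_{0,7})$ is nondegenerate. The consequence I would use is that, for \emph{any} finite spanning set $v_1,\dots,v_m$ of $N_2(\overline{M}_{0,7})$, the associated Gram matrix $G=\bigl(Q(v_i,v_j)\bigr)$ satisfies $\operatorname{rank}(G)=\dim_{\mathbb{R}}N_2(\overline{M}_{0,7})$. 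Indeed, $G$ is the matrix of the composite $\mathbb{R}^m\twoheadrightarrow N_2(\overline{M}_{0,7})\xrightarrow{\ \varphi\ }N_2(\overline{M}_{0,7})^{*}\hookrightarrow(\mathbb{R}^m)^{*}$, where the first map sends the standard basis to the $v_i$ (surjective because they span), $\varphi$ is the isomorphism induced by the nondegenerate $Q$, and the last map is the injective transpose of the first; hence $\operatorname{rank}(G)=\operatorname{rank}(\varphi)=\dim_{\mathbb{R}}N_2(\overline{M}_{0,7})$.

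First I would produce such a spanning set. By \cite{keel} the Chow ring of $\overline{M}_{0,7}$ is generated as a ring by the boundary divisors $\delta_I$, and numerical equivalence coincides with rational equivalence; therefore $N_2(\overline{M}_{0,7})$ is spanned by the degree-two monomials $\delta_I\cdot\delta_J$. Since the boundary has normal crossings, each product with $D_I\cap D_J\neq\emptyset$ is a boundary $2$-stratum, while a self-intersection $\delta_I^2$ is rewritten as a combination of boundary $2$-strata by substituting a Keel relation for one of the two factors, exactly as is done in the proof of Proposition~\ref{intersectionsame}. Thus the $420$ distinct equivalence classes $\sigma_{I,J,K}$ identified through Proposition~\ref{equivalenceclassesmadeclear} form a spanning set of $N_2(\overline{M}_{0,7})$.

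It then remains to compute the rank of the $420\times420$ Gram matrix whose entries $\sigma_{I,J,K}\cdot\sigma_{L,M,N}$ are supplied explicitly by Propositions~\ref{intersectiondistinct} and~\ref{intersectionsame}. I expect this to be the main obstacle, being a sizeable explicit linear-algebra computation rather than a conceptual difficulty. To keep it tractable I would organize the $420$ classes according to their type $(|I|,|J|,|K|)$ and exploit the $S_7$-action permuting the markings, so that the Gram matrix is assembled from a small number of orbit representatives and its rank is computed by machine. Carrying this out gives $\operatorname{rank}(G)=127$, and hence $\dim_{\mathbb{R}}N_2(\overline{M}_{0,7})=127$. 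As an independent check, this agrees with the known fourth Betti number of $\overline{M}_{0,7}$: since all the cohomology of $\overline{M}_{0,n}$ is algebraic and of Tate type by \cite{keel}, one has $\dim_{\mathbb{R}}N_2(\overline{M}_{0,7})=b_4(\overline{M}_{0,7})=127$.
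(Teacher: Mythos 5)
Your argument is correct, but it takes a genuinely different route from the paper. The paper reduces to characteristic zero (arguing, much as you do implicitly, that the answer is determined by the characteristic-independent combinatorial intersection data), then invokes the fact that $\overline{M}_{0,7}$ is an HI scheme to identify $N_2(\overline{M}_{0,7})$ with $H_4(\overline{M}_{0,7};\mathbb{R})$, and finally computes $b_4=127$ from the Poincar\'e polynomial $1+42q^2+127q^4+42q^6+q^8$ via the recursion of Chen--Gibney--Krashen for the spaces $T_{d,n}$. You instead stay entirely inside intersection theory: nondegeneracy of $Q$ on $N_2$ (which holds by the very definition of numerical equivalence for middle-dimensional cycles on a fourfold, and is the same fact the paper records just before Proposition~\ref{signature8641}) forces the Gram matrix of any spanning set to have rank equal to $\dim_{\mathbb{R}}N_2$, and the $420$ classes $\sigma_{I,J,K}$ span by Keel's presentation of the Chow ring together with the Keel-relation trick of Proposition~\ref{intersectionsame}; the entries of the $420\times 420$ Gram matrix are then exactly the numbers in Propositions~\ref{intersectiondistinct} and~\ref{intersectionsame}. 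What your approach buys is self-containedness and uniformity in the characteristic (no detour through Hodge theory or the HI property), and it doubles as a consistency check on the intersection formulas; what it costs is that the final step is a large explicit rank computation that you assert rather than exhibit --- the one soft spot in the write-up, though it is a finite, deterministic calculation from data already established, and its answer $127$ is corroborated both by your Betti-number cross-check and by the paper's signature computation $(86,41)$ in Proposition~\ref{signature8641}.
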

\begin{proof}
Let $\mathbb{K}$ be our base field. We know that the equivalence classes of the boundary $2$-strata span $N_2(\overline{M}_{0,7})$ in any characteristic. Moreover, the linear dependence relations between the equivalence classes of the boundary $2$-strata on $\overline{M}_{0,7}$, only depend on the combinatorics of the intersection between the boundary $2$-strata (that we just studied in Proposition~\ref{intersectiondistinct} and Proposition~\ref{intersectionsame}), and all this does not depend on $\textrm{char}(\mathbb{K})$. Hence, $\dim_{\mathbb{R}}N_2(\overline{M}_{0,7})$ does not depend on the characteristic, and we can assume that $\mathbb{K}=\mathbb{C}$.

As a complex variety, $\overline{M}_{0,7}$ is an HI scheme. An HI scheme $X$ is a scheme of characteristic zero such that the canonical map $A_*(X)\rightarrow H_*(X;\mathbb{Z})$ from the Chow groups to the homology is an isomorphism (see \cite[Appendix]{keel} for more details). It follows that the Chow group $\textrm{CH}_2(\overline{M}_{0,7})$ is isomorphic to the homology group $H_4(\overline{M}_{0,7};\mathbb{Z})$. Therefore, the dimension of $N_2(\overline{M}_{0,7})\cong\textrm{CH}_2(\overline{M}_{0,7})\otimes_{\mathbb{Z}}\mathbb{R}$ as a real vector space is equal to $b_4$, the $4$-th Betti number of $\overline{M}_{0,7}$.

We can find $b_4$ by computing $P_{\overline{M}_{0,7}}(q)=\sum_{j\geq0}b_jq^j$, the Poincar\'e polynomial of $\overline{M}_{0,7}$. We compute this polynomial by using a recursive formula in \cite[Section 5]{chengibneykrashen}, which gives the Poincar\'e polynomial of the space $T_{d,n}$, the compact moduli space of stable $n$-pointed rooted trees of $d$-dimensional projective spaces. In our case, $\overline{M}_{0,7}=T_{1,6}$ (see \cite[Proposition 3.4.3]{chengibneykrashen}), and one can compute that $P_{\overline{M}_{0,7}}(q)=1+42q^2+127q^4+42q^6+q^8$.
\end{proof}
\begin{proposition}
\label{signature8641}
The bilinear form $Q$ has signature $(86,41)$.
\end{proposition}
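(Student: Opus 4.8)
The plan is to recognize $Q$ as the cup product pairing on the middle cohomology of $\overline{M}_{0,7}$ and then to read off its topological signature from the Hodge numbers, which are furnished by the Poincaré polynomial computed in Proposition~\ref{dimension127}.

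First I would reduce to the case $\mathbb{K}=\mathbb{C}$. Exactly as in the proof of Proposition~\ref{dimension127}, the boundary $2$-strata span $N_2(\overline{M}_{0,7})$ in every characteristic, their linear relations depend only on the intersection combinatorics of Propositions~\ref{intersectiondistinct} and~\ref{intersectionsame}, and those intersection numbers are themselves independent of $\mathrm{char}(\mathbb{K})$. Hence, fixing a basis of $N_2(\overline{M}_{0,7})$ drawn from the boundary $2$-strata, the Gram matrix of $Q$ has characteristic-independent integer entries, so its signature does not depend on the characteristic. We may therefore assume $\mathbb{K}=\mathbb{C}$. Over $\mathbb{C}$ the space $\overline{M}_{0,7}$ is a smooth projective HI scheme of dimension $4$, so by \cite{keel} together with Poincaré duality $N_2(\overline{M}_{0,7})\cong\mathrm{CH}_2(\overline{M}_{0,7})\otimes\mathbb{R}\cong H^4(\overline{M}_{0,7};\mathbb{R})$, and under this identification $Q$ is precisely the cup product form $(\alpha,\beta)\mapsto\int_{\overline{M}_{0,7}}\alpha\cup\beta$ on the middle cohomology. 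Its signature is therefore the topological signature $\sigma(\overline{M}_{0,7})$.

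Next I would pin down the Hodge numbers. Because the cohomology of $\overline{M}_{0,n}$ is generated by the algebraic classes of the boundary divisors, it is of Tate type: $h^{p,q}=0$ for $p\neq q$ and $h^{p,p}=b_{2p}$. From $P_{\overline{M}_{0,7}}(q)=1+42q^2+127q^4+42q^6+q^8$ we thus read $h^{0,0}=1$, $h^{1,1}=42$, $h^{2,2}=127$, $h^{3,3}=42$, $h^{4,4}=1$. I then invoke the Hodge-theoretic signature formula $\sigma(X)=\sum_{p,q}(-1)^q h^{p,q}(X)$, a standard consequence of the Hodge--Riemann bilinear relations (equivalently of Hirzebruch's signature theorem) valid for a compact Kähler manifold of even complex dimension. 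Since all classes here are of type $(p,p)$, this collapses to
\begin{equation*}
\sigma(\overline{M}_{0,7})=\sum_{p}(-1)^p h^{p,p}=1-42+127-42+1=45.
\end{equation*}
Writing the signature as $(s_+,s_-)$, the relations $s_++s_-=b_4=127$ and $s_+-s_-=45$ yield $(s_+,s_-)=(86,41)$, as claimed.

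The main obstacle is not the arithmetic but securing the identifications that license the Hodge index formula: one must verify that the numerical intersection form $Q$ agrees with the cup product form on $H^4$ (using that $\overline{M}_{0,7}$ is HI and that $Q$ is nondegenerate), and that $H^*(\overline{M}_{0,7})$ is purely Hodge--Tate so that only the diagonal Hodge numbers contribute. If one prefers to avoid Hodge theory altogether, an alternative is to select an explicit basis of $127$ boundary $2$-strata, assemble the Gram matrix from Propositions~\ref{intersectiondistinct} and~\ref{intersectionsame}, and diagonalize it (for instance via the signs of its leading principal minors); this is routine but computationally heavy, whereas the Hodge-theoretic route is both shorter and characteristic-robust once the reduction to $\mathbb{C}$ is in place.
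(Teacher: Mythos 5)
Your proposal is correct and follows essentially the same route as the paper: reduce to $\mathbb{K}=\mathbb{C}$ via the characteristic-independence of the Gram matrix, identify $Q$ with the cup product on $H^4(\overline{M}_{0,7};\mathbb{R})$ using the HI property, and apply the Hodge--Riemann signature formula with the Poincar\'e polynomial to get index $45$ and hence signature $(86,41)$. The only cosmetic difference is that you invoke the Tate-type structure of the cohomology to kill the off-diagonal Hodge numbers, whereas the paper writes out the full formula $\sum_{p+q\ \text{even}}(-1)^p h^{p,q}$ and sets those terms to zero directly; the computations agree.
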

\begin{proof}
The $420$ equivalence classes of the boundary $2$-strata span $N_2(\overline{M}_{0,7})$. Therefore, we can choose $127$ of these $2$-cycles to form a basis for $N_2(\overline{M}_{0,7})$, and a matrix representation for $Q$ is given by the intersection matrix of these $127$ equivalence classes of boundary $2$-strata. Since this matrix just depends on the combinatorics of the intersection between the boundary $2$-strata, we have that the signature of $Q$ is independent of the characteristic of the base field. So let $\mathbb{C}$ be our base field.

With this assumption, we have that $\overline{M}_{0,7}$ is an HI scheme and a smooth manifold, implying that $N_2(\overline{M}_{0,7})\cong H^4(\overline{M}_{0,7};\mathbb{R})$. Using the Hodge-Riemann bilinear relations (see \cite[Chapter 0]{griffithsharris}), one has that
\begin{equation*}
I(\overline{M}_{0,7})=\sum_{p+q~\textrm{is even}}(-1)^ph^{p,q},
\end{equation*}
where $I(\overline{M}_{0,7})$ is the index of $\overline{M}_{0,7}$ (i.e. the number of positive eigenvalues minus the number of negative eigenvalues in a matrix representation of $Q$), and $h^{p,q}$ the Hodge numbers of $\overline{M}_{0,7}$.

Now, knowing that the Poincar\'e polynomial of $\overline{M}_{0,7}$ is $P_{\overline{M}_{0,7}}(q)=1+42q^2+127q^4+42q^6+q^8$ (see the proof of Proposition~\ref{dimension127}), and using the Hodge decomposition, we can compute that
\begin{equation*}
I(\overline{M}_{0,7})=2h^{0,0}+4h^{2,0}-2h^{1,1}+2h^{4,0}-2h^{3,1}+h^{2,2}=2+0-84+0-0+127=45,
\end{equation*}
implying that the signature of $Q$ is $(86,41)$.
\end{proof}
Under a more arithmetic perspective, we can view $Q$ as a bilinear form on $H_4(\overline{M}_{0,7};\mathbb{Z})$ (which is torsion-free). In this case, $Q$ is unimodular by Poincar\'e duality and odd by Proposition~\ref{intersectionsame}.


\section{Lift of effective cycles}
\label{lift!}
The technique we are about to describe allows to construct an effective $k$-cycle on $\overline{M}_{0,n+1}$ given an effective $k$-cycle on $\overline{M}_{0,n}$.
\subsection{}
Let $\pi\colon\overline{M}_{0,n+1}\rightarrow\overline{M}_{0,n}$ be the map forgetting the $(n+1)$-th label. Consider the boundary divisor $D_{n,n+1}$ and let $i\colon D_{n,n+1}\hookrightarrow\overline{M}_{0,n+1}$ be the inclusion morphism. The following varieties can be naturally identified
\begin{equation*}
\overline{M}_{0,n}\equiv\overline{M}_{0,[n-1]\cup\{x\}}\times\overline{M}_{0,\{n,n+1,x\}}\equiv D_{n,n+1},
\end{equation*}
and therefore we have a commutative diagram
\begin{center}
\begin{tikzpicture}[>=angle 90]
\matrix(a)[matrix of math nodes,
row sep=2em, column sep=2em,
text height=1.5ex, text depth=0.25ex]
{\overline{M}_{0,n}&\overline{M}_{0,n+1}\\
&\overline{M}_{0,n}.\\};
\path[->] (a-1-1) edge node[above]{$i$}(a-1-2);
\path[->] (a-1-1) edge node[below left]{$\textrm{id}$}(a-2-2);
\path[->] (a-1-2) edge node[right]{$\pi$}(a-2-2);
\end{tikzpicture}
\end{center}
\begin{definition}
If $\alpha\in\textrm{Eff}_k(\overline{M}_{0,n})$, then $i_*\alpha\in\textrm{Eff}_k(\overline{M}_{0,n+1})$ will be called the \emph{lift of $\alpha$ to $\overline{M}_{0,n+1}$}.
\end{definition}
Observe that, instead of just considering $D_{n,n+1}$, one can do a similar construction with any $D_{ab}$, $\{a,b\}\subset[n+1]$. As the following lemma explains, some of the properties of $\alpha$ are preserved after we lift it.
\begin{liftinglemma}
Let $k$ and $n$ be integers such that $0<k<n-3$. Let $\alpha$ be the equivalence class of an effective $k$-cycle on $\overline{M}_{0,n}$. Consider the maps $i\colon\overline{M}_{0,n}\rightarrow\overline{M}_{0,n+1}$ and $\pi\colon\overline{M}_{0,n+1}\rightarrow\overline{M}_{0,n}$ as above. Then
\begin{itemize}
\item[\emph{(i)}] if $\alpha\in\emph{Eff}_k(\overline{M}_{0,n})\setminus V_k(\overline{M}_{0,n})$, then $i_*\alpha\in\emph{Eff}_k(\overline{M}_{0,n+1})\setminus V_k(\overline{M}_{0,n+1})$;
\item[\emph{(ii)}] if $\alpha$ is extremal in $\emph{Eff}_k(\overline{M}_{0,n})$, then $i_*\alpha$ is extremal in $\emph{Eff}_k(\overline{M}_{0,n+1})$.
\end{itemize}
\end{liftinglemma}
\begin{proof}

\

\begin{itemize}
\item[(i)] Assume by contradiction that $i_*\alpha\in V_k(\overline{M}_{0,n+1})$. Therefore we can write $i_*\alpha=\sum_{j=1}^mr_j[Z_j]$, where $r_j\in\mathbb{R}_{>0}$ and $Z_j\subset\overline{M}_{0,n+1}$ are boundary $k$-strata. But then $\alpha=\textrm{id}_*\alpha=\pi_*i_*\alpha=\sum_{j=1}^mr_j\pi_*[Z_j]\in V_k(\overline{M}_{0,n})$, because $\pi_*[Z_j]$ is either zero or the equivalence class of a boundary $k$-stratum on $\overline{M}_{0,n}$ for all $j$. This is a contradiction.
\item[(ii)] Assume that $i_*\alpha=\sum_{j=1}^mr_j[Z_j]$, where $r_j\in\mathbb{R}_{>0}$ and $Z_j\subset\overline{M}_{0,n+1}$ are irreducible and effective $k$-cycles. We prove that $[Z_j]$ is proportional to $i_*\alpha$ for all $j=1,\ldots,m$.

Consider the reduction morphism $f_{\mathcal{A}}\colon\overline{M}_{0,n+1}\rightarrow\overline{M}_{0,\mathcal{A}}$ where $\mathcal{A}$ is the weight data $(\frac{1}{n-1},\ldots,\frac{1}{n-1},1,1)$ (see \cite{hassett}). The exceptional locus of $f_{\mathcal{A}}$ is exactly $D_{n,n+1}=i(\overline{M}_{0,n})$ and $f_{\mathcal{A}}(D_{n,n+1})$ is a point. In particular $f_{\mathcal{A}*}i_*\alpha=0$, implying that
\begin{equation*}
\sum_{j=1}^mr_jf_{\mathcal{A}*}[Z_j]=0.
\end{equation*}
Since $\overline{M}_{0,\mathcal{A}}$ is projective, we have that $f_{\mathcal{A}*}[Z_j]=0$ for all $j$, which is equivalent to $\dim(f_{\mathcal{A}}(Z_j))<k$ for all $j$. This implies that, given any $j$, $Z_j\subset D_{n,n+1}$. Define $Z_j'=i^{-1}Z_j$, so that $i_*[Z_j']=Z_j$, and therefore $\sum_{j=1}^mr_j[Z_j']$ is an effective $k$-cycle on $\overline{M}_{0,n}$ such that
\begin{equation*}
i_*\sum_{j=1}^mr_j[Z_j']=\sum_{j=1}^mr_j[Z_j]=i_*\alpha.
\end{equation*}
The pushforward morphism $i_*$ is injective on $k$-cycles, because $\pi_*\circ i_*$ is the identity. It follows that $\alpha=\sum_{j=1}^mr_j[Z_j']$, and hence each $[Z_j']$ is proportional to $\alpha$ by the extremality of $\alpha$ in $\eff_k(\overline{M}_{0,n})$. In particular, each $[Z_j]$ has to be proportional to $i_*\alpha$ for all $j$.
\end{itemize}
\end{proof}
\noindent Alternatively, the following proposition can be used to prove the second part of the lifting lemma.
\begin{proposition}[{{\cite[Proposition 2.5]{chencoskun}}}]
\label{usethisforextremality}
Let $\gamma\colon Y\rightarrow X$ be a morphism between two projective varieties. Assume that $A_k(Y)\rightarrow N_k(Y)$ is an isomorphism and that the composite $\gamma_*\colon A_k(Y)\rightarrow A_k(X)\rightarrow N_k(X)$ is injective. Moreover, assume that $f\colon X\rightarrow W$ is a morphism to a projective variety $W$ whose exceptional locus is contained in $\gamma(Y)$. If a $k$-dimensional subvariety $Z\subset Y$ is an extremal cycle in $\emph{Eff}_k(Y)$ and if $\dim(\gamma(Z))-\dim(f(\gamma(Z)))>0$, then $\gamma(Z)$ is also extremal in $\emph{Eff}_k(X)$.
\end{proposition}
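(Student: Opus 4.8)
The plan is to verify extremality of $\gamma(Z)$ straight from the definition: take an arbitrary effective decomposition of $[\gamma(Z)]$ in $X$, use the contraction $f$ to force all the pieces into $\gamma(Y)$, lift them back to effective cycles on $Y$, and then invoke the extremality of $Z$ that is already assumed there. As a preliminary I would check that $\gamma(Z)$ is genuinely a $k$-cycle. Since an extremal cycle generates a ray, $[Z]\neq 0$ in $A_k(Y)\cong N_k(Y)$, so the injectivity of the composite $\gamma_*\colon A_k(Y)\to N_k(X)$ gives $\gamma_*[Z]\neq 0$. Because the pushforward of a $k$-cycle vanishes as soon as its image drops dimension, this forces $\dim\gamma(Z)=k$ and $\gamma_*[Z]=d\,[\gamma(Z)]$ with $d=\deg(Z/\gamma(Z))\geq 1$; in particular $[\gamma(Z)]$ is a nonzero class in $\eff_k(X)$.

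Now suppose $[\gamma(Z)]=\sum_j r_j[Z_j]$ with $r_j>0$ and $Z_j\subset X$ irreducible $k$-dimensional subvarieties; the goal is to show each $[Z_j]$ is proportional to $[\gamma(Z)]$. This is where $f$ is used. The hypothesis $\dim f(\gamma(Z))<\dim\gamma(Z)=k$ says $f$ contracts $\gamma(Z)$, so $f_*[\gamma(Z)]=0$ and hence $\sum_j r_j\,f_*[Z_j]=0$ in $N_k(W)$. Each $f_*[Z_j]$ is an effective class, and pairing with $A^k$ for an ample class $A$ on the projective variety $W$ gives $A^k\cdot f_*[Z_j]\geq 0$, with equality exactly when $f_*[Z_j]=0$. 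Since every $r_j>0$ and the terms sum to zero, each term must vanish, i.e. $f$ contracts each $Z_j$; hence $Z_j$ lies in the exceptional locus of $f$, which by hypothesis is contained in $\gamma(Y)$.

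It then remains to lift the relation to $Y$. Because $Z_j\subseteq\gamma(Y)$, I would choose an irreducible component of $\gamma^{-1}(Z_j)$ dominating $Z_j$ and, cutting it down by general ample divisor sections on $Y$, produce a $k$-dimensional irreducible $Z_j'\subset Y$ mapping finitely onto $Z_j$, so that $\gamma_*[Z_j']=m_j[Z_j]$ with $m_j>0$. Substituting, $\gamma_*[Z]=d\sum_j r_j[Z_j]=\gamma_*\bigl(d\sum_j \tfrac{r_j}{m_j}[Z_j']\bigr)$, and the injectivity of $\gamma_*$ together with $A_k(Y)\cong N_k(Y)$ yields the \emph{numerical} identity $[Z]=d\sum_j\tfrac{r_j}{m_j}[Z_j']$ in $N_k(Y)$. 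Every $[Z_j']$ is effective and every coefficient is positive, so the extremality of $Z$ in $\eff_k(Y)$ forces each $[Z_j']$ to be proportional to $[Z]$; pushing forward, each $[Z_j]=\tfrac{1}{m_j}\gamma_*[Z_j']$ is proportional to $\gamma_*[Z]=d\,[\gamma(Z)]$, which is what we wanted.

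I expect the main obstacle to be the lifting step: guaranteeing that each $Z_j\subseteq\gamma(Y)$ admits a $k$-dimensional multisection $Z_j'$ with positive pushforward degree when $\gamma$ is allowed to have positive-dimensional fibers. In the application of this paper $\gamma=i$ is a closed embedding, so the fibers are points and this step is immediate; the general case is what requires the cutting-down argument. A secondary point needing care is deducing the vanishing of \emph{each} $f_*[Z_j]$ from a single numerical relation, which is precisely why the pairing with the ample power $A^k$ is invoked rather than mere effectivity of the classes.
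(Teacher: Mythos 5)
Your proposal is correct, and it is essentially the standard argument: push the decomposition forward to $W$, use projectivity of $W$ to force each $f_*[Z_j]=0$, conclude $Z_j\subseteq\mathrm{Exc}(f)\subseteq\gamma(Y)$, lift each $Z_j$ back to an effective class on $Y$, and apply injectivity of $\gamma_*$ together with extremality of $[Z]$ upstairs. Note that the paper itself gives no proof of this proposition (it is quoted from Chen--Coskun), but your argument coincides with the proof the paper does give for part (ii) of the Lifting Lemma, which is exactly the special case $\gamma=i$ a closed embedding and $f=f_{\mathcal{A}}$ -- there the lifting step you rightly flag as the only delicate point is immediate, since $i^{-1}Z_j$ already works.
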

\noindent Given this result, one can prove that $i_*\alpha$ is extremal by taking $Y=\overline{M}_{0,n}$, $X=\overline{M}_{0,n+1}$, $\gamma=i$, $W=\overline{M}_{0,\mathcal{A}}$ with $\mathcal{A}=(\frac{1}{n-1},\ldots,\frac{1}{n-1},1,1)$, $f=f_{\mathcal{A}}$ and $[Z]=\alpha$.
\subsection{Fulton's question}
\label{fulton'squestion}
The following question is attributed to Fulton.
\begin{fulton'squestion}[{{\cite[Question 1.1]{keelmckernan}}}]
Let $0< k< n-3$. Is it true that $V_k(\overline{M}_{0,n})=\eff_k(\overline{M}_{0,n})$?
\end{fulton'squestion}
\noindent Following \cite{gibneykeelmorrison} and \cite{vermeire} notation, denote the previous question with $F_k(0,n)$ (observe that the analogue question for $k=0$ or $k=n-3$ is trivial). $F_1(0,5)$ is answered positively because $\overline{M}_{0,5}$ is a del Pezzo of degree $5$. The answer to $F_1(0,6)$ and $F_1(0,7)$ is also yes, but this is a deep result of Keel and McKernan (see \cite{keelmckernan}). $F_1(0,n)$ for $n>7$ is an open question, and the conjecture that says $F_1(0,n)$ has a positive answer for $n>7$ is called the F-conjecture.

Keel and Vermeire showed that $F_{n-4}(0,n)$ has a negative answer for all $n\geq6$ (see \cite{gibneykeelmorrison}, \cite{vermeire}). This result, combined with the lifting lemma, clearly shows what is the answer to $F_k(0,n)$ for $1<k<n-4$.
\begin{corollary}
\label{intermediatecasesout}
If $1<k<n-4$, then $F_k(0,n)$ has a negative answer, or in other words $V_k(\overline{M}_{0,n})\subsetneq\emph{Eff}_k(\overline{M}_{0,n})$.
\end{corollary}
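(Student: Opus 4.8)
The plan is to reduce the statement to the two ingredients already available: the Keel--Vermeire theorem that $F_{m-4}(0,m)$ fails for every $m\geq 6$, and part~(i) of the Lifting Lemma. The idea is to start with a genuinely non-vital effective $k$-cycle on the \emph{smallest} moduli space where one can occur as a divisor, namely $\overline{M}_{0,k+4}$, and then transport it all the way up to $\overline{M}_{0,n}$ by iterating the lift.

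First I would fix $1<k<n-4$ and set $m_0:=k+4$. A Keel--Vermeire divisor on $\overline{M}_{0,m_0}$ has dimension $m_0-4=k$, so it is an effective $k$-cycle, and the hypothesis $1<k$ is exactly what guarantees $m_0=k+4\geq 6$, so that the Keel--Vermeire result applies. Concretely, $F_{m_0-4}(0,m_0)=F_k(0,k+4)$ has a negative answer, i.e. $V_k(\overline{M}_{0,k+4})\subsetneq\eff_k(\overline{M}_{0,k+4})$; hence there exists $\alpha\in\eff_k(\overline{M}_{0,k+4})\setminus V_k(\overline{M}_{0,k+4})$ (for instance a Keel--Vermeire divisor itself).

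Next I would iterate part~(i) of the Lifting Lemma. To lift a $k$-cycle from $\overline{M}_{0,m}$ to $\overline{M}_{0,m+1}$ the hypothesis $0<k<m-3$ is required; at the base level $m=k+4$ this reads $k<k+1$, which holds, and since the bound only relaxes as $m$ increases it continues to hold at every subsequent stage $m=k+4,k+5,\ldots,n-1$. Applying the Lifting Lemma once produces a cycle in $\eff_k(\overline{M}_{0,k+5})\setminus V_k(\overline{M}_{0,k+5})$, and after the $n-k-4\geq 1$ steps forced by the inequality $k<n-4$ we arrive at an iterated lift $i_*\cdots i_*\alpha\in\eff_k(\overline{M}_{0,n})\setminus V_k(\overline{M}_{0,n})$. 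Since $V_k(\overline{M}_{0,n})\subseteq\eff_k(\overline{M}_{0,n})$ always holds, the existence of such a cycle forces the containment to be strict, which is the assertion.

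I do not expect a serious obstacle: once the Lifting Lemma and the Keel--Vermeire theorem are in hand, the argument is pure bookkeeping. The only point that requires care is matching the two constraints in the hypothesis to the two constraints coming from the ingredients: the lower bound $k>1$ is precisely what makes the base space $\overline{M}_{0,k+4}$ large enough (at least $6$ markings) for a Keel--Vermeire divisor to exist, while the upper bound $k<n-4$ is precisely what guarantees that at least one lift is needed to reach $\overline{M}_{0,n}$ and that the lifting hypothesis $k<m-3$ is never violated along the way.
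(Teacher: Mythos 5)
Your proof is correct and follows essentially the same route as the paper: invoke the Keel--Vermeire negative answer to $F_{m-4}(0,m)$ on the smallest space carrying $k$-dimensional divisors and then iterate part (i) of the Lifting Lemma up to $\overline{M}_{0,n}$. Your base space $\overline{M}_{0,k+4}$ is in fact the correct reading of what the paper's proof denotes ``$\overline{M}_{0,k+1}$'', and your check that the lifting hypothesis $0<k<m-3$ holds at every stage from $m=k+4$ to $m=n-1$ is the same (implicit) bookkeeping the paper performs.
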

\begin{proof}
We know that $V_k(\overline{M}_{0,k+1})\subsetneq\eff_k(\overline{M}_{0,k+1})$ from \cite{gibneykeelmorrison}, \cite{vermeire}. Therefore, using the lifting lemma, we see that $V_k(\overline{M}_{0,k+2})\subsetneq\eff_k(\overline{M}_{0,k+2})$. Now, by iterating this argument, we obtain that $V_k(\overline{M}_{0,n})\subsetneq\eff_k(\overline{M}_{0,n})$.
\end{proof}


\section{Lifts to $\overline{M}_{0,7}$ of the Keel-Vermeire divisors on $\overline{M}_{0,6}$}
\label{liftsofthekeelvermeiredivisors}
The following description of the Keel-Vermeire divisors on $\overline{M}_{0,6}$ is convenient for us.
\begin{definition}
Assume $[6]=\{i,j,k,\ell,m,q\}$. A divisor on $\overline{M}_{0,6}$ in the from
\begin{equation*}
\delta_{mq,ij}^{KV}:=\delta_{im}+\delta_{jm}+\delta_{kq}+\delta_{\ell q}+2\delta_{ijm}-\delta_{mq},
\end{equation*}
is called a \emph{Keel-Vermeire divisor on $\overline{M}_{0,6}$}.
\end{definition}
\begin{properties of the keel-vermeire divisors on M06bar}
A Keel-Vermeire divisor $\delta_{mq,ij}^{KV}$ on $\overline{M}_{0,6}$ is effective and cannot be written as an effective sum of boundary divisors. This is proved in \cite{vermeire} in characteristic zero, and one can see that it actually holds in any characteristic. It is also important for us to know that the Keel-Vermeire divisors on $\overline{M}_{0,6}$ are extremal in the cone $\eff_2(\overline{M}_{0,6})$ in any characteristic. A proof of this can be found in \cite{castravettevelev13}. Observe that $\delta_{mq,ij}^{KV}=\delta_{ij,mq}^{KV}=\delta_{qm,ij}^{KV}=\delta_{mq,ji}^{KV}=\delta_{mq,k\ell}^{KV}$, therefore there are $15$ Keel-Vermeire divisors on $\overline{M}_{0,6}$.

One more property (but we do not use it in this paper) is that the Keel-Vermeire divisors together with the boundary divisors on $\overline{M}_{0,6}$ generate the cone $\eff_2(\overline{M}_{0,6})$. This was first proved by Hassett and Tschinkel in \cite{hassetttschinkel}. An alternative proof of this fact can be found in \cite{castravet} (actually, in \cite{castravet} it is proved that the Cox ring of $\overline{M}_{0,6}$ is generated by the sections of these divisors, which is a stronger condition).
\end{properties of the keel-vermeire divisors on M06bar}
Now we want to lift the Keel-Vermeire divisors to $\overline{M}_{0,7}$ and give a combinatorial description of these lifts.
\begin{proposition}
Let $[7]=\{a,b,i,j,k,\ell,m\}$. Then any lift to $\overline{M}_{0,7}$ of a Keel-Vermeire divisor on $\overline{M}_{0,6}$ can be written as the following linear combination of boundary $2$-strata
\begin{equation*}
\sigma_{im,jk\ell,ab}+\sigma_{jm,ik\ell,ab}+\sigma_{ij\ell m,k,ab}+\sigma_{ijkm,\ell,ab}+2\sigma_{ijm,k\ell,ab}-\sigma_{ijk\ell,m,ab}.
\end{equation*}
\end{proposition}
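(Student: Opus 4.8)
The plan is to compute the lift directly from the defining formula of the Keel--Vermeire divisor together with the lifting construction of Section~\ref{lift!}. Recall that the Keel--Vermeire divisor $\delta_{mq,ij}^{KV}$ on $\overline{M}_{0,6}$ is given by
\begin{equation*}
\delta_{mq,ij}^{KV}=\delta_{im}+\delta_{jm}+\delta_{kq}+\delta_{\ell q}+2\delta_{ijm}-\delta_{mq}.
\end{equation*}
To lift it to $\overline{M}_{0,7}$ we embed $\overline{M}_{0,6}$ as the boundary divisor $D_{ab}$ via the isomorphism $\overline{M}_{0,6}\cong\overline{M}_{0,[5]\cup\{x\}}\cong D_{ab}$ described in Section~\ref{lift!}, where the extra label $q$ on $\overline{M}_{0,6}$ plays the role of the node $x$ attaching the twig carrying the markings $a,b$. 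First I would fix this identification explicitly: the six markings of $\overline{M}_{0,6}$ are $\{i,j,k,\ell,m,q\}$, and under the embedding the label $q$ becomes the attaching node, so that a boundary divisor $\delta_S$ on $\overline{M}_{0,6}$ (with $q\notin S$) pushes forward to the boundary $2$-stratum $\sigma_{S,S^c,ab}$, where $S^c$ is taken inside $\{i,j,k,\ell,m\}$; and a boundary divisor $\delta_S$ with $q\in S$ pushes forward to the $2$-stratum obtained by grouping $\{a,b\}$ with the $q$-side.

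The core of the proof is then a careful term-by-term pushforward. Under the inclusion $i\colon D_{ab}\hookrightarrow\overline{M}_{0,7}$, each boundary divisor of $\overline{M}_{0,6}$ becomes a boundary $2$-stratum of $\overline{M}_{0,7}$ of the shape $\sigma_{-,-,ab}$, since the twig carrying $a,b$ is preserved. Concretely, I expect:
\begin{equation*}
\delta_{im}\mapsto\sigma_{im,jk\ell,ab},\quad
\delta_{jm}\mapsto\sigma_{jm,ik\ell,ab},\quad
\delta_{kq}\mapsto\sigma_{ij\ell m,k,ab},\quad
\delta_{\ell q}\mapsto\sigma_{ijkm,\ell,ab},
\end{equation*}
\begin{equation*}
\delta_{ijm}\mapsto\sigma_{ijm,k\ell,ab},\quad
\delta_{mq}\mapsto\sigma_{ijk\ell,m,ab}.
\end{equation*}
Here the divisors $\delta_{kq},\delta_{\ell q},\delta_{mq}$ on $\overline{M}_{0,6}$ contain the label $q$, so the corresponding $2$-strata on $\overline{M}_{0,7}$ have the complementary set (together with $a,b$) on the large side, which is why $k,\ell,m$ end up as the singleton middle blocks $J$ in those terms. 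Assembling these with the coefficients $1,1,1,1,2,-1$ from the definition of $\delta_{mq,ij}^{KV}$ yields exactly the claimed expression.

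Two points warrant care, and the second is the main obstacle. The first is purely combinatorial: one must verify that every pushforward lands on a genuine boundary $2$-stratum satisfying the stability bounds $2\le|I|\le4$, $1\le|J|\le3$, $2\le|K|\le4$, and that the normalization $I\amalg J\amalg K\sim K\amalg J\amalg I$ is applied consistently; this is routine checking against the combinatorial description in Section~\ref{vital2cyclesandintersections}. The harder point is justifying that the pushforward $i_*$ of the \emph{class} $\delta_{mq,ij}^{KV}$ really is computed termwise as above, i.e.\ that the identification of $\overline{M}_{0,6}$ with $D_{ab}$ carries the class $\delta_S$ on $\overline{M}_{0,6}$ to the class of $D_S\cap D_{ab}$ inside $\overline{M}_{0,7}$, and that this intersection is the reduced $2$-stratum $s_{S,S^c,ab}$ with multiplicity one. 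This follows from the fact that the boundary of $\overline{M}_{0,7}$ has normal crossings, so that $\delta_S\cdot\delta_{ab}=\sigma_{S,S^c,ab}$ as recalled in Section~\ref{preliminaries}, but I would state it explicitly to rule out any spurious multiplicity or correction term coming from the embedding. Once this compatibility is in hand, the formula is immediate and, since the Keel--Vermeire divisors are permuted among themselves by relabeling, the phrase ``any lift'' is covered by the symmetry of the construction.
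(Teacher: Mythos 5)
Your proposal is correct and follows essentially the same route as the paper: identify $D_{ab}$ with $\overline{M}_{0,6}$ carrying an extra label in place of the attaching node, then push forward the six terms of $\delta_{mq,ij}^{KV}$ one at a time by attaching the rational tail with markings $\{a,b\}$, and your term-by-term dictionary matches the paper's exactly. The extra care you take about multiplicity one via the normal-crossings property is a reasonable elaboration of what the paper leaves to the preliminaries.
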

\begin{proof}
Let us choose a boundary divisor $D_{ab}$ on $\overline{M}_{0,7}$. This can be identified with $\overline{M}_{0,([7]\cup\{x\})\setminus\{a,b\}}$, where $x$ is an extra label. So, if we write $([7]\cup\{x\})\setminus\{a,b\}=\{i,j,k,l,m,x\}$, a Keel-Vermeire divisor on $\overline{M}_{0,([7]\cup\{x\})\setminus\{a,b\}}$ is in the form
\begin{equation*}
\delta_{mx,ij}^{KV}=\delta_{im}+\delta_{jm}+\delta_{kx}+\delta_{\ell x}+2\delta_{ijm}-\delta_{mx}.
\end{equation*}
If $\iota\colon D_{ab}\hookrightarrow\overline{M}_{0,7}$ is the natural inclusion, then the lift to $\overline{M}_{0,7}$ of $\delta_{mx,ij}^{KV}$ is by definition
\begin{equation*}
\iota_*\delta_{mx,ij}^{KV}=\iota_*\delta_{im}+\iota_*\delta_{jm}+\iota_*\delta_{kx}+\iota_*\delta_{\ell x}+2\iota_*\delta_{ijm}-\iota_*\delta_{mx}.
\end{equation*}
Now, each one of the pushforwards appearing in the right hand side of the previous identity, can be computed by attaching along $x$ a rational tail with the labels $\{x,a,b\}$. By doing so, we obtain the claimed $2$-cycle on $\overline{M}_{0,7}$.
\end{proof}
\begin{notation}
We use $\sigma_{ab,m,ij}^{KV}$ to denote the following lift to $\overline{M}_{0,7}$ of a Keel-Vermeire divisor on $\overline{M}_{0,6}$
\begin{equation*}
\sigma_{ab,m,ij}^{KV}:=\sigma_{im,jk\ell,ab}+\sigma_{jm,ik\ell,ab}+\sigma_{ij\ell m,k,ab}+\sigma_{ijkm,\ell,ab}+2\sigma_{ijm,k\ell,ab}-\sigma_{ijk\ell,m,ab}.
\end{equation*}
\end{notation}
The next lemma will be used several times.
\begin{lemma}
\label{risolutore}
Let $\pi_y\colon\overline{M}_{0,7}\rightarrow\overline{M}_{0,[7]\setminus\{y\}}$ be the map forgetting the label $y\in[7]$, and let $\sigma_{ab,m,ij}^{KV}$ be a lift to $\overline{M}_{0,7}$ of a Keel-Vermeire divisor on $\overline{M}_{0,6}$. Then
\begin{displaymath}
\pi_{y*}\sigma_{ab,m,ij}^{KV}=\left\{ \begin{array}{ll}
\delta_{mb,ij}^{KV}~&\textrm{if $y=a$}\\
\delta_{ma,ij}^{KV}~&\textrm{if $y=b$}\\
\delta_{ab}~&\textrm{otherwise}.
\end{array} \right.
\end{displaymath}
\end{lemma}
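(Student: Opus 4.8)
The plan is to reduce everything to the behavior of the forgetful pushforward $\pi_{y*}$ on a single boundary $2$-stratum $s_{I,J,K}$, and then to apply the resulting rule to each of the six terms making up $\sigma_{ab,m,ij}^{KV}$. Recall that $s_{I,J,K}$ is a chain of three components carrying the markings $I$, $J$, $K$, the two outer ones each having one node and the middle one two nodes, so that $s_{I,J,K}\cong\overline{M}_{0,|I|+1}\times\overline{M}_{0,|J|+2}\times\overline{M}_{0,|K|+1}$. Forgetting $y$ keeps the component containing $y$ stable precisely when that component retains at least three special points after the deletion; in that case $y$ is free to move on its component, the induced map to the image has positive-dimensional fibers, and $\pi_{y*}[s_{I,J,K}]=0$. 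Hence the pushforward can be nonzero only when deleting $y$ destabilizes its component, i.e. exactly when $y\in I$ with $|I|=2$, or $y\in K$ with $|K|=2$, or $y\in J$ with $|J|=1$.

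In each of those three destabilizing situations the unstable component contracts, and I claim the induced map onto its image is an isomorphism, giving the rule
\begin{equation*}
\pi_{y*}[s_{\{y,z\},J,K}]=\delta_{J\cup\{z\}},\quad \pi_{y*}[s_{I,J,\{y,w\}}]=\delta_{J\cup\{w\}},\quad \pi_{y*}[s_{I,\{y\},K}]=\delta_{I}
\end{equation*}
as boundary divisors on $\overline{M}_{0,6}$. Verifying this degree-one (in fact isomorphism) statement is the only genuinely geometric point and is the step I expect to require the most care: it can be checked on the product description above, where the contracted $\mathbb{P}^1$ has exactly three special points, so its modulus is trivial, the reconstruction of the omitted point is unique, and the map is seen to merely relabel a node while being an isomorphism on the surviving factors.

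With this rule in hand the three cases are bookkeeping. For $y=a$ (and symmetrically $y=b$) the cleanest route bypasses the term-by-term sum entirely: since by construction $\sigma_{ab,m,ij}^{KV}=\iota_*\delta_{mx,ij}^{KV}$ with $\iota\colon D_{ab}\cong\overline{M}_{0,\{i,j,k,\ell,m,x\}}\hookrightarrow\overline{M}_{0,7}$, forgetting $a$ contracts the $\{a,b,x\}$-twig and sends $b$ to the node $x$, so $\pi_a|_{D_{ab}}$ is precisely the relabeling isomorphism $x\mapsto b$ onto all of $\overline{M}_{0,6}$. Therefore $\pi_{a*}\sigma_{ab,m,ij}^{KV}=\delta_{mb,ij}^{KV}$ at once, and likewise $\pi_{b*}\sigma_{ab,m,ij}^{KV}=\delta_{ma,ij}^{KV}$. (Equivalently, applying the displayed rule to all six terms, in which $a\in K=\{a,b\}$ throughout, reproduces verbatim the six summands defining $\delta_{mb,ij}^{KV}$.)

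For $y\in\{i,j,k,\ell,m\}$ I would instead apply the rule term by term, observing that $y\notin K=\{a,b\}$, so only the branches $y\in I$ with $|I|=2$ and $y\in J$ with $|J|=1$ can contribute. A direct inspection shows that for each such $y$ only one or two of the six strata survive, and every surviving contribution equals $\delta_{ab}$ once the resulting boundary divisor on $\overline{M}_{0,6}$ is rewritten through its complement, which is always $\{a,b\}$. The delicate instance is $y=m$, where the terms $s_{im,jk\ell,ab}$, $s_{jm,ik\ell,ab}$ and $-s_{ijk\ell,m,ab}$ all survive and combine as $\delta_{ab}+\delta_{ab}-\delta_{ab}=\delta_{ab}$, the coefficient $-1$ being exactly what collapses the sum; in every other case a single term survives. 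This yields $\pi_{y*}\sigma_{ab,m,ij}^{KV}=\delta_{ab}$ and completes the proof. The main obstacle throughout is thus not any single hard computation but the careful justification of the degree-one contraction rule, together with the bookkeeping that makes the Keel–Vermeire coefficients reproduce the full divisor $\delta_{mb,ij}^{KV}$ under $\pi_a$ while collapsing to a single $\delta_{ab}$ under each internal $\pi_y$.
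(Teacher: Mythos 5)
Your proof is correct and follows essentially the same route as the paper: a term-by-term computation of $\pi_{y*}$ on the six boundary $2$-strata defining $\sigma_{ab,m,ij}^{KV}$, using the destabilization rule for forgetful pushforwards (which the paper applies implicitly and you spell out and justify), with the same case split between $y\in\{a,b\}$ and $y\in\{i,j,k,\ell,m\}$ and the same cancellation $\delta_{ab}+\delta_{ab}-\delta_{ab}=\delta_{ab}$ in the $y=m$ case. Your alternative handling of $y=a,b$ via the observation that $\pi_a$ restricted to $D_{ab}$ is the relabeling isomorphism $x\mapsto b$ is a pleasant shortcut but does not change the substance of the argument.
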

\begin{proof}
First observe that
\begin{equation*}
\pi_{a*}\sigma_{ab,m,ij}^{KV}=\pi_{a*}(\sigma_{im,jk\ell,ab}+\sigma_{jm,ik\ell,ab}+\sigma_{ij\ell m,k,ab}+\sigma_{ijkm,\ell,ab}+2\sigma_{ijm,k\ell,ab}-\sigma_{ijk\ell,m,ab})=
\end{equation*}
\begin{equation*}
\delta_{im}+\delta_{jm}+\delta_{kb}+\delta_{\ell b}+2\delta_{ijm}-\delta_{mb}=\delta_{mb,ij}^{KV}.
\end{equation*}
In the same way, one can prove that $\pi_{b*}\sigma_{ab,m,ij}^{KV}=\delta_{ma,ij}^{KV}$.

Let $y\in[7]\setminus\{a,b\}$. Up to relabeling, we can assume that $\sigma_{ab,m,ij}^{KV}=\sigma_{67,5,12}^{KV}$. Moreover, by the symmetries of the Keel-Vermeire divisors, we just have to prove our claim when $y=5$ or $y=1$. In the former case,
\begin{equation*}
\pi_{5*}\sigma_{67,5,12}^{KV}=\pi_{5*}(\sigma_{15,234,67}+\sigma_{25,134,67}+\sigma_{1245,3,67}+\sigma_{1235,4,67}+2\sigma_{125,34,67}-\sigma_{1234,5,67})=
\end{equation*}
\begin{equation*}
\delta_{67}+\delta_{67}+0+0+0-\delta_{67}=\delta_{67}.
\end{equation*}
Finally, if $y=1$, we have that
\begin{equation*}
\pi_{1*}\sigma_{67,5,12}^{KV}=\delta_{67}+0+0+0+0-0=\delta_{67}.
\end{equation*}
\end{proof}
Since we have $\binom{7}{2}$ choices for $D_{ab}$ and $15$ choices for a Keel-Vermeire divisor inside $D_{ab}$, in total we have $315$ lifts of Keel-Vermeire divisors to $\overline{M}_{0,7}$. The question now is whether these $315$ equivalence classes generate different extremal rays of $\eff_2(\overline{M}_{0,7})$. This is what we are about to prove.
\begin{proposition}
\label{315extremalrays}
The $315$ lifts to $\overline{M}_{0,7}$ of the Keel-Vermeire divisors on $\overline{M}_{0,6}$ generate distinct extremal rays of $\emph{Eff}_2(\overline{M}_{0,7})$ that lie outside of $V_2(\overline{M}_{0,7})$.
\end{proposition}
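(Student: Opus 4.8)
The plan is to split the statement into two independent parts: (a) each single lift is extremal in $\eff_2(\overline{M}_{0,7})$ and lies outside $V_2(\overline{M}_{0,7})$, and (b) the $315$ lifts are pairwise non-proportional, so that they span $315$ genuinely distinct rays. Part (a) reduces to part (b) being about \emph{distinctness} once extremality is known, and the two parts use completely different inputs.

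For part (a) I would simply invoke the Lifting Lemma. Fix a pair $\{a,b\}$ and a Keel-Vermeire divisor $\delta^{KV}$ on $D_{ab}\cong\overline{M}_{0,6}$; by definition its lift is $\iota_*\delta^{KV}$ for the inclusion $\iota\colon D_{ab}\hookrightarrow\overline{M}_{0,7}$. After relabelling the markings (the whole construction is $S_7$-equivariant) we may assume $\{a,b\}=\{6,7\}$, so that $\iota$ is exactly the map $i$ of Section~\ref{lift!} with $n=6$ and $k=2$. The properties of the Keel-Vermeire divisors recalled above say precisely that $\delta^{KV}$ is effective, is not an effective sum of boundary divisors, and is extremal; that is, $\delta^{KV}\in\eff_2(\overline{M}_{0,6})\setminus V_2(\overline{M}_{0,6})$ and $\delta^{KV}$ is extremal in $\eff_2(\overline{M}_{0,6})$. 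Parts (i) and (ii) of the Lifting Lemma then yield both assertions of part (a) at once.

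For part (b) the work is combinatorial, and the key tool is Lemma~\ref{risolutore} applied to the forgetful maps $\pi_y$. Suppose $\sigma_{ab,m,ij}^{KV}=r\,\sigma_{a'b',m',i'j'}^{KV}$ with $r\in\mathbb{R}_{>0}$; I want to force the two index data to coincide. First I recover $\{a,b\}$: by Lemma~\ref{risolutore} the pushforward $\pi_{y*}$ of a lift is a Keel-Vermeire divisor exactly when $y\in\{a,b\}$, and is the boundary divisor $\delta_{ab}$ otherwise. On $\overline{M}_{0,6}$ the boundary divisors are themselves boundary $2$-strata, while the Keel-Vermeire divisors are not effective sums of boundary divisors, so the set of $y$ for which the pushforward lies outside $V_2$ equals $\{a,b\}$; since $V_2$ is a cone this set is insensitive to positive scaling, and comparing the two sides of the proportionality forces $\{a,b\}=\{a',b'\}$. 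Next I pin down the scalar: choosing any $c\in[7]\setminus\{a,b\}$, Lemma~\ref{risolutore} gives that $\pi_{c*}$ of either lift equals the same nonzero class $\delta_{ab}$, so the relation degenerates to $\delta_{ab}=r\,\delta_{ab}$ and hence $r=1$. Finally, applying $\pi_{a*}$ turns the (now genuine) equality of classes into $\delta_{mb,ij}^{KV}=\delta_{m'b,i'j'}^{KV}$ on $\overline{M}_{0,[7]\setminus a}\cong\overline{M}_{0,6}$; since there are exactly $15$ distinct Keel-Vermeire divisors, these agree as unordered partitions into pairs, and knowing $b$ recovers $m$ as its partner together with $\{i,j\}$ (up to the allowed swap with $\{k,\ell\}$). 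Thus the two lifts coincide, which proves pairwise non-proportionality.

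The only real subtlety is the bookkeeping in part (b), and I expect the step that makes it clean to be the choice of $c\notin\{a,b\}$: forgetting such a label destroys all Keel-Vermeire information and returns the single class $\delta_{ab}$, which simultaneously detects $\{a,b\}$ and forces the proportionality constant to be $1$. In this way no intersection-number computation is needed, and the distinctness of the $15$ Keel-Vermeire classes does the rest; combining this with part (a) gives $315$ distinct extremal rays of $\eff_2(\overline{M}_{0,7})$ all lying outside $V_2(\overline{M}_{0,7})$.
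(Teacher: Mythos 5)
Your proposal is correct and follows essentially the same route as the paper: extremality and non-membership in $V_2(\overline{M}_{0,7})$ come from the Lifting Lemma applied to the known properties of the Keel-Vermeire divisors, and distinctness is extracted via Lemma~\ref{risolutore} together with the facts that the $15$ Keel-Vermeire divisors on $\overline{M}_{0,6}$ generate distinct rays and are not proportional to boundary divisors. The only cosmetic difference is that the paper splits the distinctness argument according to whether the two lifts sit in the same boundary divisor $D_{ab}$ (obtaining $\delta^{KV}_{mb,ij}=r\,\delta_{cd}$ as an immediate contradiction in the second case), whereas you first recover $\{a,b\}$ as the set of labels whose forgetful pushforward leaves $V_2(\overline{M}_{0,6})$ and then pin down $r=1$; both arguments are valid and rest on the same inputs.
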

\begin{proof}
The extremality of these rays and the fact that they lie outside of the cone $V_2(\overline{M}_{0,7})$ follow from our lifting lemma in Section~\ref{lift!}. Let us prove that these rays are all distinct.

Consider two lifts of Keel-Vermeire divisors in the form $\sigma_{ab,m,ij}^{KV}$, $\sigma_{ab,m',i'j'}^{KV}$. Assume that $\sigma_{ab,m,ij}^{KV}=r\sigma_{ab,m',i'j'}^{KV}$ for some $r\in\mathbb{R}_{>0}$. Then we must have $\delta_{mb,ij}^{KV}=\pi_{a*}\sigma_{ab,m,ij}^{KV}=\pi_{a*}r\sigma_{ab,m',i'j'}^{KV}=r\delta_{m'b,i'j'}^{KV}$, which implies that $\delta_{mb,ij}^{KV}=\delta_{m'b,i'j'}^{KV}$ because different Keel-Vermeire divisors generate different rays. In particular, $\sigma_{ab,m,ij}^{KV}=\sigma_{ab,m',i'j'}^{KV}$. From this we conclude that lifts of different Keel-Vermeire divisors which are contained in the same boundary divisor give rise to distinct rays of $\eff_2(\overline{M}_{0,7})$.

Let us consider two distinct boundary divisors $D_{ab}$ and $D_{cd}$. Consider two lifts $\sigma_{ab,m,ij}^{KV}$ and $\sigma_{cd,m',i'j'}^{KV}$. Assume by contradiction that $\sigma_{ab,m,ij}^{KV}=r\sigma_{cd,m',i'j'}^{KV}$ for some $r\in\mathbb{R}_{>0}$. Since $D_{ab}$ and $D_{cd}$ are distinct, we can assume without loss of generality that $a\notin\{c,d\}$. It follows that
\begin{equation*}
\delta_{mb,ij}^{KV}=\pi_{a*}\sigma_{ab,m,ij}^{KV}=\pi_{a*}r\sigma_{cd,m',i'j'}^{KV}=r\delta_{cd},
\end{equation*}
which is a contradiction because a Keel-Vermeire divisor cannot be proportional to a boundary divisor.
\end{proof}
\begin{definition}
Define $V_2^{KV}(\overline{M}_{0,7})\subseteq\eff_2(\overline{M}_{0,7})$ to be the cone generated by the boundary $2$-strata on $\overline{M}_{0,7}$ and the lifts to $\overline{M}_{0,7}$ of the Keel-Vermeire divisors on $\overline{M}_{0,6}$.
\end{definition}
\begin{corollary}
The cone $\emph{Eff}_2(\overline{M}_{0,7})$ has at least $735$ extremal rays: $420$ are generated by the boundary $2$-strata and $315$ are generated by the lifts of Keel-Vermeire divisors. In particular, the closed cone $V_2^{KV}(\overline{M}_{0,7})$ has exactly $735$ extremal rays.
\end{corollary}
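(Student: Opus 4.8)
The plan is to assemble the two counts already established in the preceding results: the $420$ extremal rays coming from the distinct equivalence classes of boundary $2$-strata (Corollary~\ref{IseeV2}) and the $315$ extremal rays coming from the lifts of the Keel-Vermeire divisors (Proposition~\ref{315extremalrays}). Since both families consist of classes that are extremal in $\eff_2(\overline{M}_{0,7})$, the only thing needed for the first assertion is to verify that the $735$ classes span pairwise distinct rays; once this is done, each of them spanning an extremal ray of $\eff_2(\overline{M}_{0,7})$ immediately yields that $\eff_2(\overline{M}_{0,7})$ has at least $735$ extremal rays.

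First I would check distinctness \emph{across} the two families, since distinctness within each family is already known (Corollary~\ref{IseeV2} for the boundary strata, Proposition~\ref{315extremalrays} for the lifts). To rule out a coincidence between a boundary $2$-stratum ray and a lift ray, I would invoke the dichotomy recorded in Proposition~\ref{315extremalrays}: every lift of a Keel-Vermeire divisor spans a ray lying \emph{outside} $V_2(\overline{M}_{0,7})$, whereas every boundary $2$-stratum spans a ray lying \emph{inside} $V_2(\overline{M}_{0,7})$ by the very definition of the latter cone. Hence no lift ray can coincide with a boundary-stratum ray, and the $420+315=735$ rays are genuinely distinct.

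For the second assertion I would argue that extremality in the ambient cone descends to the subcone $V_2^{KV}(\overline{M}_{0,7})$. Concretely, if $x$ spans an extremal ray of $\eff_2(\overline{M}_{0,7})$ and $x\in V_2^{KV}(\overline{M}_{0,7})$, then any decomposition $x=a+b$ with $a,b\in V_2^{KV}(\overline{M}_{0,7})\subseteq\eff_2(\overline{M}_{0,7})$ forces $a$ and $b$ to be nonnegative multiples of $x$ by extremality in the larger cone; thus $x$ is extremal in $V_2^{KV}(\overline{M}_{0,7})$ as well. Applying this to all $735$ generators shows that each of them spans an extremal ray of $V_2^{KV}(\overline{M}_{0,7})$. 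Since $V_2^{KV}(\overline{M}_{0,7})$ is by definition the cone generated by exactly these $735$ classes, and since every extremal ray of a finitely generated cone is spanned by one of its generators, the extremal rays of $V_2^{KV}(\overline{M}_{0,7})$ are precisely these $735$ distinct rays.

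The argument is essentially bookkeeping layered on top of the two preceding results, so there is no serious obstacle; the two points requiring a little care are the cross-family distinctness, where one must use the inside/outside $V_2(\overline{M}_{0,7})$ dichotomy rather than any numerical computation, and the descent-of-extremality step, which relies on reading $V_2^{KV}(\overline{M}_{0,7})\subseteq\eff_2(\overline{M}_{0,7})$ as subcones of the common vector space $N_2(\overline{M}_{0,7})$. As a free byproduct, the existence of extremal rays already shows $V_2^{KV}(\overline{M}_{0,7})$ is pointed, so the count of extremal rays is unambiguous.
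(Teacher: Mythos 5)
Your proposal is correct and follows exactly the route the paper intends: the corollary is stated without a separate proof precisely because it is the bookkeeping you carry out, combining Corollary~\ref{IseeV2} (the $420$ distinct extremal boundary rays) with Proposition~\ref{315extremalrays} (the $315$ distinct extremal lift rays lying outside $V_2(\overline{M}_{0,7})$, which gives the cross-family distinctness), and then using that extremality in $\eff_2(\overline{M}_{0,7})$ descends to the finitely generated subcone $V_2^{KV}(\overline{M}_{0,7})$ whose extremal rays must be spanned by its generators. No gaps.
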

Now our goal is to describe $\eff_2(\overline{M}_{0,7})$ outside of the cone $V_2^{KV}(\overline{M}_{0,7})$. The first question that one may ask is whether or not $V_2^{KV}(\overline{M}_{0,7})$ is equal to $\eff_2(\overline{M}_{0,7})$. In what follows, we establish that these two cones are not equal.


\section{Embedded blow ups of $\mathbb{P}^2$ in $\overline{M}_{0,n}$}
\label{embeddedblowupsofp2inm0nbar}
\subsection{The blow up construction}
In \cite{castravettevelev12}, Castravet and Tevelev give a way to embed $\textrm{Bl}(\mathbb{P}^2)$ in $\overline{M}_{0,n}$, where the embedding and the blow up depend on the choice of $n$ points in $\mathbb{P}^2$. Moreover, they tell us how the boundary divisors pullback under this embedding. Here is their construction.
\begin{theorem}[{{\cite[Theorem 3.1]{castravettevelev12}}}]
\label{construct2cycles}
Suppose $p_1,\ldots,p_n\in\mathbb{P}^2$ are distinct points, and let $U\subset\mathbb{P}^2$ be the complement of the union of the lines spanned by these points. Consider the morphism
\begin{equation*}
F\colon U\rightarrow M_{0,n}
\end{equation*}
defined as follows: given $p\in U$, let $F(p)=[(\mathbb{P}^1;\varphi_p(p_1),\ldots,\varphi_p(p_n))]$, where $\varphi_p\colon\mathbb{P}^2\dashrightarrow\mathbb{P}^1$ is the projection from $p$. Then $F$ extends to a morphism
\begin{equation*}
F\colon\emph{Bl}_{p_1,\ldots,p_n}\mathbb{P}^2\rightarrow\overline{M}_{0,n}.
\end{equation*}
If the points $p_1,\ldots,p_n$ do not lie on a (possibly reducible) conic, then $F$ is a closed embedding. In this case the boundary divisors $\delta_I$ of $\overline{M}_{0,n}$ pullback as follows: for each line $L$ in our line arrangement, if $I\subseteq[n]$ is such that $p_i\in L\Leftrightarrow i\in I$, then $F^*\delta_I=\widehat{L}_I$ (the strict transform of $L_I$) and (assuming $|I|\geq3$) $F^*\delta_{I\setminus\{k\}}=E_k$, where $k\in I$ and $E_k$ is the exceptional divisor over $p_k$. Other boundary divisors pullback trivially.
\end{theorem}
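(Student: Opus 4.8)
The plan is to build $F$ in three stages: define it on the open part $U$, extend it across the blow-up $W=\mathrm{Bl}_{p_1,\dots,p_n}\mathbb{P}^2$, and finally check it is a closed embedding under the conic hypothesis. The pullback formulas will fall out of the same limit analysis that produces the extension. First I would verify that $F$ is a well-defined morphism on $U$. For $p\in U$ the projection $\varphi_p$ is a morphism on $\mathbb{P}^2\setminus\{p\}$, and $\varphi_p(p_i)=\varphi_p(p_j)$ forces $p$ onto the line $\overline{p_ip_j}$; since $p\in U$ avoids every such line, the points $\varphi_p(p_1),\dots,\varphi_p(p_n)$ are distinct and $F(p)\in M_{0,n}$. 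To make this algebraic and to prepare the extension, I would package the projections into one family: over $\mathbb{P}^2$ take the incidence $\mathbb{P}^1$-bundle $\mathcal{P}=\{(p,\ell):p\in\ell\}\subset\mathbb{P}^2\times(\mathbb{P}^2)^\vee$, whose fibre over $p$ is the target $\mathbb{P}^1$ of $\varphi_p$, together with the $n$ rational sections $s_i\colon p\mapsto(p,\overline{pp_i})$. Over $U$ this is a family of smooth $n$-pointed rational curves inducing $F|_U$.

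Next I would show $F$ extends to $W$, either by stabilizing the family above after pulling it back to $W$ and invoking the moduli property, or by the valuative criterion; the key point is that the only indeterminacy of the rational map $\mathbb{P}^2\dashrightarrow\overline{M}_{0,n}$ sits over the points $p_k$. Along a general point of a line $L_I$ the markings $\{i\in I\}$ collide, but their cross-ratios stay equal to the intrinsic cross-ratios of $\{p_i:i\in I\}$ on $L_I$, because $\varphi_p$ restricts to an isomorphism $L_I\xrightarrow{\sim}\mathbb{P}^1$; hence for a fixed general $q\in L_I$ the limit is a single stable curve in $D_I$ and $F$ is already defined there. At a point lying on several lines the colliding clusters are disjoint and each is either rigid (two markings) or intrinsically determined (at least three collinear markings), so $F$ extends there as well. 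Genuine direction-dependence occurs only at $p_k$: approaching $p_k$ along $v\in E_k=\mathbb{P}(T_{p_k})$ sends the marking $k$ to the position $v$ in the pencil of lines through $p_k$ while the remaining markings go to the fixed directions $\overline{p_kp_i}$, so $F|_{E_k}$ is nonconstant and $F$ extends across $E_k$.

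This same analysis reads off the pullbacks. The locus where the markings $\{i\in I\}$ collide is exactly the strict transform of $L_I$, giving $F^{*}\delta_I=\widehat{L}_I$; if $p_k\in L_I$ with $|I|\ge 3$, then the directions $\overline{p_kp_i}$ for $i\in I\setminus\{k\}$ all coincide with the direction of $L_I$, so $E_k$ maps into $D_{I\setminus\{k\}}$ and $F^{*}\delta_{I\setminus\{k\}}=E_k$; every other $\delta_J$ meets $F(W)$ in codimension at least $2$ and therefore pulls back to zero. A short transversality computation along $\widehat{L}_I$ and $E_k$ shows each of these pullbacks is reduced, i.e. of multiplicity one.

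Finally, assuming the $p_i$ lie on no (possibly reducible) conic, I would prove that $F$ is injective with everywhere-injective differential; since $W$ is smooth and projective and $\overline{M}_{0,n}$ is separated, a proper injective unramified morphism is a closed immersion. For injectivity on $U$, if $F(p)=F(p')$ with $p\ne p'$ there is $g\in\mathrm{PGL}_2$ with $g\circ\varphi_{p'}$ agreeing with $\varphi_p$ on every $p_i$; forming $\Phi=(\varphi_p,\,g\circ\varphi_{p'})\colon\mathbb{P}^2\dashrightarrow\mathbb{P}^1\times\mathbb{P}^1$, the preimage $\Phi^{-1}(\Delta)$ of the diagonal is cut out by a degree-two equation, hence a conic, and it passes through all the $p_i$; it is a proper conic precisely because $p\ne p'$ makes the two pencils distinct, contradicting the hypothesis. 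The infinitesimal version of this computation, in which a tangent vector in $\ker dF_p$ produces a conic through the $p_i$ tangent to that direction, shows that $dF$ is injective on $U$. I would then propagate injectivity and unramifiedness across the boundary, using the explicit limit descriptions above to see that distinct points of $\widehat{L}_I$ and of $E_k$, as well as interior points versus boundary points, receive distinct images. I expect this injectivity-via-conic step, together with the bookkeeping needed to extend it over the boundary divisors, to be the main obstacle; the extension and the pullback computations are comparatively mechanical once the limit analysis is in place.
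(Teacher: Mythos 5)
The paper does not prove this statement: it is quoted verbatim from Castravet--Tevelev (\cite[Theorem 3.1]{castravettevelev12}) and used as a black box, so there is no internal argument to measure your proposal against; any real comparison would have to be with the proof in [CT12] itself.

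Taken on its own terms, your sketch is a credible reconstruction of the standard argument, and the Steiner-type observation --- that a projectivity between the two pencils of lines through $p$ and $p'$ sweeps out a (possibly reducible) conic, namely the zero locus of $\ell_1m_2-\ell_2m_1$, which must then contain all the $p_i$ --- is exactly the right mechanism for injectivity and explains why the hypothesis is phrased in terms of conics. Two points that you have labelled ``comparatively mechanical'' are in fact where the substance lies. First, the extension to $W=\textrm{Bl}_{p_1,\ldots,p_n}\mathbb{P}^2$: a rational map to a proper target from a smooth surface need not extend over a codimension-two indeterminacy locus, and stable reduction over a two-dimensional base is not automatic; the honest route is to pull back your incidence family $\mathcal{P}$ to $W$, blow up its total space along the loci where the sections collide, and verify that the result is a flat family of stable $n$-pointed curves --- it is precisely this computation that simultaneously proves the pullback formulas $F^*\delta_I=\widehat{L}_I$ and $F^*\delta_{I\setminus\{k\}}=E_k$ with multiplicity one, rather than these being a separate ``transversality check.'' Second, injectivity and unramifiedness along the boundary (separating points of $E_k$ from points of $\widehat{L}_I$ and from interior points, and handling the intersections $E_k\cap\widehat{L}_I$) still quietly uses the no-conic hypothesis (e.g.\ to rule out all but one of the $p_i$ lying on a line), and deserves to be written out rather than asserted. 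Neither issue looks fatal, but as it stands the proposal is a plan for a proof rather than a proof.
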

In \cite{castravettevelev12}, this theorem is used to embed curves in $\overline{M}_{0,n}$ that are possible candidate to be counterexamples to the F-conjecture (later on in the paper, they show that these curves actually are not counterexamples by means of the ``arithmetic break" technique).

From now on, our attention is focused on this kind of embedded surfaces. Let us give a name to them.
\begin{definition}
Consider $n$ points $p_1,\ldots,p_n\in\mathbb{P}^2$ that do not lie on a (possibly reducible) conic. Then, using Theorem~\ref{construct2cycles}, the embedded surface $F\colon\textrm{Bl}_{p_1,\ldots,p_n}\hookrightarrow\overline{M}_{0,n}$ will be called an \emph{embedded blow up of $\mathbb{P}^2$ in $\overline{M}_{0,n}$}. The points $p_1,\ldots,p_n$ will be called the \emph{points associated to the embedded blow up}.
\end{definition}
\begin{remark}
If $\sigma$ is the equivalence class of an embedded blow up of $\mathbb{P}^2$ in $\overline{M}_{0,n}$, observe that the intersection properties of $\sigma$ can be studied using the projection formula. Let $\sigma_{I,J,K}$ be a codimension $2$ boundary stratum on $\overline{M}_{0,n}$. Then
\begin{equation*}
\sigma\cdot\sigma_{I,J,K}=F_*[\textrm{Bl}_{p_1,\ldots,p_n}(\mathbb{P}^2)]\cdot(\delta_I\cdot\delta_K)=[\textrm{Bl}_{p_1,\ldots,p_n}(\mathbb{P}^2)]\cdot F^*(\delta_I\cdot\delta_K)=(F^*\delta_I)\cdot(F^*\delta_K).
\end{equation*}
Now, the intersection $(F^*\delta_I)\cdot(F^*\delta_K)$ is easy to compute because the two divisors $F^*\delta_I$ and $F^*\delta_K$ can be either zero, an exceptional divisor, or the strict transform of a line that is spanned by the $n$ points in $\mathbb{P}^2$.
\end{remark}
It will be crucial to know that the Keel-Vermeire divisors on $\overline{M}_{0,6}$ can be realized as particular embedded blow ups of $\mathbb{P}^2$. This is proved by Castravet and Tevelev in \cite[Section 9]{castravettevelev13} using \emph{irreducible hypertrees}.
\begin{definition}
Let $n\geq3$ and $d\geq1$. A \emph{hypertree} $\Gamma=\{\Gamma_1,\ldots,\Gamma_d\}$ on the set $[n]$ is a collection of subsets of $[n]$ such that the following conditions are satisfied:
\begin{itemize}
\item[$\bullet$] Any subset $\Gamma_j$ has at least three elements;
\item[$\bullet$] Any $i\in[n]$ is contained in at least two subsets $\Gamma_j$;
\item[$\bullet$] (\emph{convexity axiom})
\begin{equation*}
\bigg|\bigcup_{j\in S}\Gamma_j\bigg|\geq\sum_{j\in S}(|\Gamma_j|-2)~\textrm{for any $S\subsetneq[d]$, $|S|>1$};
\end{equation*}
\item[$\bullet$] (\emph{normalization})
\begin{equation*}
n-2=\sum_{j\in[d]}(|\Gamma_j|-2).
\end{equation*}
\end{itemize}
A hypertree is \emph{irreducible} if all the inequalities in the convexity axiom are strict. A \emph{planar realization} for a hypertree $\Gamma$ on the set $[n]$ is a configuration of different points $p_1,\ldots,p_n\in\mathbb{P}^2$ such that, for any subset $S\subsetneq[n]$ with at least three points, $\{p_i\}_{i\in S}$ are collinear if and only if $S\subseteq\Gamma_j$ for some $j$.
\end{definition}
\begin{remark}
\label{kvasembeddedblowups}
It turns out that, up to a change of labels, there is a unique irreducible hypertree on the set $[6]$, and a planar realization for this is given by the intersection points of four lines in $\mathbb{P}^2$ in general linear position. In \cite[Section 9]{castravettevelev13} is proved that the embedding in $\overline{M}_{0,6}$ of the blow up of $\mathbb{P}^2$ at the six points of this planar realization gives a Keel-Vermeire divisor. Moreover, we can actually obtain all the $15$ Keel-Vermeire divisors by labeling the six points appropriately.
\end{remark}
The reason why we are interested in these embedded blow ups of $\mathbb{P}^2$ in $\overline{M}_{0,7}$ is because they allow us to provide examples of effective $2$-cycles whose equivalence classes do not lie in the cone $V_2^{KV}(\overline{M}_{0,7})$. The examples we discuss are given by what we call \emph{special hypertree surfaces}, which are related to Castravet and Tevelev irreducible hypertrees.
\subsection{Special hypertree surfaces on $\overline{M}_{0,7}$}
\label{specialhypertreesurfaces}
\begin{definition}
\label{definitionofspecialhypertreesurface}
An embedded blow up of $\mathbb{P}^2$ in $\overline{M}_{0,7}$ with associated points $p_1,\ldots,p_7$ will be called a \emph{hypertree surface on $\overline{M}_{0,7}$} if there exists $y\in[7]$ such that $p_1,\ldots,\widehat{p}_y,\ldots,p_7$ is a planar realization for an irreducible hypertree on the set $[7]\setminus\{y\}$. A hypertree surface will be called \emph{special} if we can find three distinct such $y\in[7]$.
\end{definition}
\begin{lemma}
\label{tevelevmihadetto}
Let $h\in\emph{Eff}_2(\overline{M}_{0,7})$ be the equivalence class of a hypertree surface on $\overline{M}_{0,7}$. Then $h\notin V_2(\overline{M}_{0,7})$.
\end{lemma}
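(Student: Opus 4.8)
The plan is to push the class $h$ forward along a suitable forgetful map so that it lands on a Keel-Vermeire divisor, which by the properties recorded above cannot be written as an effective sum of boundary divisors. By the definition of a hypertree surface there is a label $y\in[7]$ such that the six points $p_1,\ldots,\widehat{p}_y,\ldots,p_7$ form a planar realization of an irreducible hypertree on $[7]\setminus\{y\}$. Let $\pi_y\colon\overline{M}_{0,7}\rightarrow\overline{M}_{0,[7]\setminus\{y\}}\cong\overline{M}_{0,6}$ be the map forgetting $y$, let $F$ be the Castravet-Tevelev embedding defining $h$, let $G\colon\textrm{Bl}_{p_1,\ldots,\widehat{p}_y,\ldots,p_7}\mathbb{P}^2\rightarrow\overline{M}_{0,6}$ be the embedding attached to these six points by Theorem~\ref{construct2cycles}, and let $b\colon\textrm{Bl}_{p_1,\ldots,p_7}\mathbb{P}^2\rightarrow\textrm{Bl}_{p_1,\ldots,\widehat{p}_y,\ldots,p_7}\mathbb{P}^2$ be the blow-down of the exceptional divisor over $p_y$.

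First I would establish the factorization $\pi_y\circ F=G\circ b$. On the dense open locus where all seven projected points are distinct, a point $p$ is sent by both morphisms to the class $[(\mathbb{P}^1;(\varphi_p(p_i))_{i\neq y})]$; since $\textrm{Bl}_{p_1,\ldots,p_7}\mathbb{P}^2$ is reduced and $\overline{M}_{0,6}$ is separated, two morphisms agreeing on a dense open set coincide. Granting this, the projection formula together with the fact that $b$ is birational (so that $b_*[\textrm{Bl}_{p_1,\ldots,p_7}\mathbb{P}^2]=[\textrm{Bl}_{p_1,\ldots,\widehat{p}_y,\ldots,p_7}\mathbb{P}^2]$) yields
\[
\pi_{y*}h=(\pi_y\circ F)_*[\textrm{Bl}_{p_1,\ldots,p_7}\mathbb{P}^2]=G_*b_*[\textrm{Bl}_{p_1,\ldots,p_7}\mathbb{P}^2]=G_*[\textrm{Bl}_{p_1,\ldots,\widehat{p}_y,\ldots,p_7}\mathbb{P}^2].
\]
By Remark~\ref{kvasembeddedblowups}, the six points realizing an irreducible hypertree on $[7]\setminus\{y\}$ embed under $G$ as a Keel-Vermeire divisor, so $\pi_{y*}h$ is the class of a Keel-Vermeire divisor on $\overline{M}_{0,6}$.

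To conclude I would argue by contradiction. Suppose $h\in V_2(\overline{M}_{0,7})$, and write $h=\sum_j r_j\,\sigma_{I_j,J_j,K_j}$ with $r_j\geq0$. Applying $\pi_{y*}$ and using that a forgetful map sends boundary strata to boundary strata, each $\pi_{y*}\sigma_{I_j,J_j,K_j}$ is either zero or a boundary divisor of $\overline{M}_{0,6}$, hence lies in $V_2(\overline{M}_{0,6})$. Therefore $\pi_{y*}h$ is a nonnegative combination of boundary divisors, i.e.\ $\pi_{y*}h\in V_2(\overline{M}_{0,6})$. But $\pi_{y*}h$ is a Keel-Vermeire divisor, and such a divisor cannot be written as an effective sum of boundary divisors, a contradiction. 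Hence $h\notin V_2(\overline{M}_{0,7})$. Note that only a single admissible $y$ is needed, so the argument applies to every hypertree surface and not only to the special ones.

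The main obstacle I anticipate is the clean verification that $\pi_{y*}h=G_*[\textrm{Bl}_{p_1,\ldots,\widehat{p}_y,\ldots,p_7}\mathbb{P}^2]$: one must check that $\pi_y\circ F$ genuinely factors through the blow-down $b$ (equivalently, that it is a morphism equal to $G\circ b$, not merely a rational map), and that deleting $p_y$ really produces the configuration giving a Keel-Vermeire divisor rather than a degenerate or conic-supported one, as guaranteed by the irreducibility of the hypertree and Remark~\ref{kvasembeddedblowups}. Everything else — the behavior of boundary strata under forgetful pushforward and the fact that Keel-Vermeire divisors are not effective combinations of boundary divisors — is standard or already recorded in the excerpt.
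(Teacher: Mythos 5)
Your proposal is correct and follows essentially the same route as the paper: push $h$ forward along the forgetful map dropping the distinguished label $y$, identify $\pi_{y*}h$ with the Keel--Vermeire divisor arising from the remaining six points via Remark~\ref{kvasembeddedblowups}, and derive a contradiction from the fact that boundary $2$-strata push forward to zero or to boundary divisors while a Keel--Vermeire divisor is not an effective sum of boundary divisors. The only difference is that you spell out the factorization $\pi_y\circ F=G\circ b$ through the blow-down, a step the paper asserts in one sentence without detail.
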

\begin{proof}
Let $p_1,\ldots,p_7\in\mathbb{P}^2$ be the points associated to $h$ and assume without loss of generality that the points $p_1,\ldots,p_6$ form a planar realization for an irreducible hypertree on the set $[6]$. Arguing by contradiction, let $h=\sum \alpha_{I,J,K}\sigma_{I,J,K}$ for some coefficients $\alpha_{I,J,K}\in\mathbb{R}_{\geq0}$. If $\pi_{7}\colon\overline{M}_{0,7}\rightarrow\overline{M}_{0,6}$ is the morphism forgetting the $7$-th label, we have that $\pi_{7*}h=\sum\alpha_{I,J,K}\pi_{7*}\sigma_{I,J,K}$. Now, $\pi_{7*}\sigma_{I,J,K}$ can be either zero (for example $\pi_{7*}\sigma_{12,34,567}$), or a boundary divisor (for example $\pi_{7*}\sigma_{12,345,67}$). Therefore $\pi_{7*}h$ is an effective sum of boundary divisors on $\overline{M}_{0,6}$. However, $\pi_{7*}h$ can be thought of as the equivalence class of the surface in $\overline{M}_{0,6}$ obtained by embedding the blow up of $\mathbb{P}^2$ at $p_1,,\ldots,p_6$. But then $\pi_{7*}h$ has to be a Keel-Vermeire divisor (see Remark~\ref{kvasembeddedblowups}), implying that $\pi_{7*}h$ cannot be written as an effective sum of boundary $2$-strata. This gives a contradiction.
\end{proof}
\begin{theorem}
\label{firstmaintheorem}
Let $h\in\emph{Eff}_2(\overline{M}_{0,7})$ be the equivalence class of a special hypertree surface on $\overline{M}_{0,7}$. Then $h\notin V_2^{KV}(\overline{M}_{0,7})$.
\end{theorem}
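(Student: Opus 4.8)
The plan is to argue by contradiction, pushing forward along the three forgetful maps attached to the special labels in order to eliminate the Keel--Vermeire contributions one at a time, thereby reducing to the already-established Lemma~\ref{tevelevmihadetto}. So I would suppose $h\in V_2^{KV}(\overline{M}_{0,7})$ and write
\begin{equation*}
h=\sum_{I,J,K}\alpha_{I,J,K}\,\sigma_{I,J,K}+\sum_{ab,m,ij}\beta_{ab,m,ij}\,\sigma_{ab,m,ij}^{KV},
\end{equation*}
with all $\alpha_{I,J,K},\beta_{ab,m,ij}\in\mathbb{R}_{\geq0}$ (a finite combination, since $V_2^{KV}(\overline{M}_{0,7})$ is generated by the boundary $2$-strata and the lifts). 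The entire goal is to force every $\beta_{ab,m,ij}$ to vanish: once this is done, $h$ lies in $V_2(\overline{M}_{0,7})$, contradicting Lemma~\ref{tevelevmihadetto}.

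Let $p_1,\dots,p_7$ be the points associated to $h$, and let $y_1,y_2,y_3\in[7]$ be three distinct labels witnessing that $h$ is special. For each such $y_r$, the six remaining points form a planar realization of an irreducible hypertree, so exactly as in the proof of Lemma~\ref{tevelevmihadetto} the pushforward $\pi_{y_r*}h$ is the class of the embedded blow up of $\mathbb{P}^2$ at those six points, which by Remark~\ref{kvasembeddedblowups} is a Keel--Vermeire divisor $\kappa_r$ on $\overline{M}_{0,6}$. I would then push the displayed decomposition forward under $\pi_{y_r}$, using that $\pi_{y_r*}\sigma_{I,J,K}$ is either zero or a boundary divisor, and Lemma~\ref{risolutore} to identify each $\pi_{y_r*}\sigma_{ab,m,ij}^{KV}$ as either a boundary divisor $\delta_{ab}$ (when $y_r\notin\{a,b\}$) or a Keel--Vermeire divisor (when $y_r\in\{a,b\}$). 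The outcome is an expression of $\kappa_r$ as an effective combination, inside $\eff_2(\overline{M}_{0,6})$, of boundary divisors and Keel--Vermeire divisors.

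The crux is extremality combined with a pigeonhole count. Since $\kappa_r$ is extremal in $\eff_2(\overline{M}_{0,6})$ in any characteristic, every summand of the above effective decomposition of $\kappa_r$ must be proportional to $\kappa_r$. Now fix a lifted class $\sigma_{ab,m,ij}^{KV}$ with $\beta_{ab,m,ij}>0$. Because $\{a,b\}$ has only two elements while $y_1,y_2,y_3$ are three distinct labels, at least one index $y_r\notin\{a,b\}$; for that index Lemma~\ref{risolutore} gives $\pi_{y_r*}\sigma_{ab,m,ij}^{KV}=\delta_{ab}$, so the summand $\beta_{ab,m,ij}\,\delta_{ab}$ is a positive multiple of a boundary divisor that must be proportional to the Keel--Vermeire divisor $\kappa_r$. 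This is impossible, since a Keel--Vermeire divisor is never proportional to a boundary divisor (as already used in Proposition~\ref{315extremalrays}). Hence $\beta_{ab,m,ij}=0$ for every lifted class, which yields the contradiction with Lemma~\ref{tevelevmihadetto}.

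I expect the only delicate point to be the verification in the second paragraph that the pushforward genuinely produces an \emph{effective} decomposition of the extremal class $\kappa_r$: this rests on properness of $\pi_{y_r}$ (so pushforwards of effective cycles stay effective), on Lemma~\ref{risolutore} to pin down the images of the lifted classes, and on the extremality of the Keel--Vermeire divisors. The conceptual heart of the argument, and the precise reason the weaker notion of hypertree surface is insufficient, is the pigeonhole step: each lifted class lives over a single pair $\{a,b\}$, so three independent special directions must ``miss'' it at least once and thereby annihilate its coefficient.
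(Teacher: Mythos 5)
Your proposal is correct and follows essentially the same route as the paper's own proof: assume an effective decomposition, pick for each lift $\sigma_{ab,m,ij}^{KV}$ a special label $y_r\notin\{a,b\}$ (the same pigeonhole on three labels versus two), push forward by $\pi_{y_r}$, use Lemma~\ref{risolutore} to turn that lift into a positive multiple of $\delta_{ab}$, and invoke extremality of the Keel--Vermeire divisor $\pi_{y_r*}h$ to kill the coefficient, reducing to Lemma~\ref{tevelevmihadetto}. The only cosmetic difference is that the paper fixes one coefficient at a time and chooses the forgotten label accordingly, whereas you phrase it for all lifts at once; the substance is identical.
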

\begin{proof}
Let $p_1,\ldots,p_7\in\mathbb{P}^2$ be the points associated to $h$. Up to relabeling, we can assume that $y=5,6,7$ are such that $p_1,\ldots,\widehat{p}_y,\ldots,p_7$ is a planar realization for an irreducible hypertree on the set $[7]\setminus\{y\}$. Assume by contradiction that we can find nonnegative coefficients $\alpha_{I,J,K}$, $\beta_{ab,m,ij}$ such that
\begin{equation*}
h=\sum\alpha_{I,J,K}\sigma_{I,J,K}+\sum_{\{a,b\}\subset[7]}\sum^{15}\beta_{ab,m,ij}\sigma_{ab,m,ij}^{KV},
\end{equation*}
where $\sum\limits^{15}\beta_{ab,m,ij}\sigma_{ab,m,ij}^{KV}$ runs over the $15$ lifts of the Keel-Vermeire divisors on $D_{ab}$. Fix any coefficient $\beta_{a'b',m',i'j'}$ (so that $a',b',m',i'$ and $j'$ are fixed indices). At least one number among $5,6$ and $7$ is not contained in $\{a',b'\}$. Assume without loss of generality that $7\notin\{a',b'\}$. If we consider the morphism $\pi_{7}\colon\overline{M}_{0,7}\rightarrow\overline{M}_{0,6}$ forgetting the $7$-th label, using Lemma~\ref{risolutore} we obtain that
\begin{equation*}
\pi_{7*}h=\sum\alpha_{I,J,K}\pi_{7*}\sigma_{I,J,K}+\sum_{\{a,b\}\subset[7]}\sum^{15}\beta_{ab,m,ij}\pi_{7*}\sigma_{ab,m,ij}^{KV}=
\end{equation*}
\begin{equation*}
\sum\alpha_{I,J,K}\pi_{7*}\sigma_{I,J,K}+\sum_{b\in[6]}\sum^{15}\beta_{7b,m,ij}\pi_{7*}\sigma_{7b,m,ij}^{KV}+\sum_{\{a,b\}\subset[6]}\sum^{15}\beta_{ab,m,ij}\pi_{7*}\sigma_{ab,m,ij}^{KV}=
\end{equation*}
\begin{equation}
\label{osserva}
\sum\alpha_{I,J,K}\pi_{7*}\sigma_{I,J,K}+\sum_{b\in[6]}\sum^{15}\beta_{7b,m,ij}\delta_{bm,ij}^{KV}+\sum_{\{a,b\}\subset[6]}\sum^{15}\beta_{ab,m,ij}\delta_{ab},
\end{equation}
where $\pi_{7*}h$ is a Keel-Vermeire divisor. The total coefficient of the boundary divisor $\delta_{a'b'}$ in (\ref{osserva}) is equal to a sum $(\ldots+\beta_{a'b',m',i'j'}+\ldots)$, where the terms of the sum are equal to some of the coefficients $\alpha_{I,J,K}$, $\beta_{ab,m,ij}$. The Keel-Vermeire divisors are extremal in $\eff_2(\overline{M}_{0,6})$, therefore the coefficient of $\delta_{a'b'}$ has to be zero. Since the terms in the sum $(\ldots+\beta_{a'b',m',i'j'}+\ldots)=0$ are nonnegative, it follows that $\beta_{a'b',m',i'j'}=0$. But $\beta_{a'b',m',i'j'}$ is arbitrary, so any coefficient $\beta_{ab,m,ij}$ is equal to zero. This implies that $h=\sum\alpha_{I,J,K}\sigma_{I,J,K}\in V_2^{KV}(\overline{M}_{0,7})$, which contradicts Lemma~\ref{tevelevmihadetto}.
\end{proof}
\subsection{Classification of the special hypertree surfaces on $\overline{M}_{0,7}$}
\label{classificationspecialhypertreesurfaces}
Let us find all the possible special hypertree surfaces on $\overline{M}_{0,7}$. We start by fixing a planar realization $p_1,\ldots,p_6\in\mathbb{P}^2$ for the irreducible hypertree given by $\Gamma=\{\{1,4,5\},\{1,3,6\},\{2,3,5\},\{2,4,6\}\}$. We consider permutations of these labels later on. Observe that the points $p_1,\ldots,p_6$ span seven lines: three of them contain exactly two labeled points, and the remaining four contain exactly three labeled points. Let $X$ be the union of these seven lines. If $\textrm{char}(\mathbb{K})\neq2$, this points and lines arrangement is shown below in Figure~\ref{figure3} ($\mathbb{K}$ is our base field).
\begin{center}
\begin{figure}[H]
\centering
\includegraphics[scale=.80]{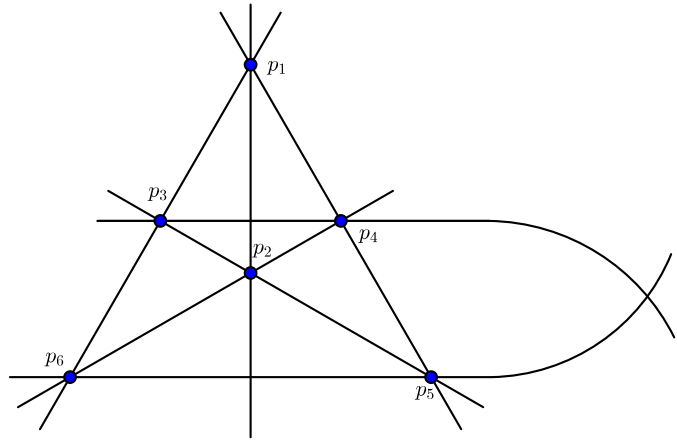}
\caption{Line arrangement spanned by a planar realization for $\Gamma$ if $\textrm{char}(\mathbb{K})\neq2$.}
\label{figure3}
\end{figure}
\end{center}
The characteristic $2$ case is discussed separately at the end of this section. Therefore, for now assume that $\textrm{char}(\mathbb{K})\neq2$.

Let us add a seventh point $p_7$ to the configuration in Figure~\ref{figure3}. Take $p_7\in\mathbb{P}^2\setminus X$. Then we cannot have a special hypertree surface, because if we drop a label $y\in[6]$, the points $p_1,\ldots,\widehat{p}_y,\ldots,p_7$ span at least five lines containing exactly two labeled points. Therefore we must have $p_7\in X$.

Doing similar considerations, one can easily prove that $p_7$ must lie in the intersection of at least two lines in $X$. Since $p_7$ is distinct from $p_1,\ldots,p_6$, we have three possibilities for $p_7$ (the lines in $X$ intersect in $9$ points). All three of these cases give a special hypertree surface, as shown in Figure~\ref{figure4}. The arrows show the three points $p_y$ that can be dropped in order to get an irreducible hypertree on the set $[7]\setminus\{y\}$.
\begin{center}
\begin{figure}[H]
\centering
\includegraphics[scale=.85,valign=t]{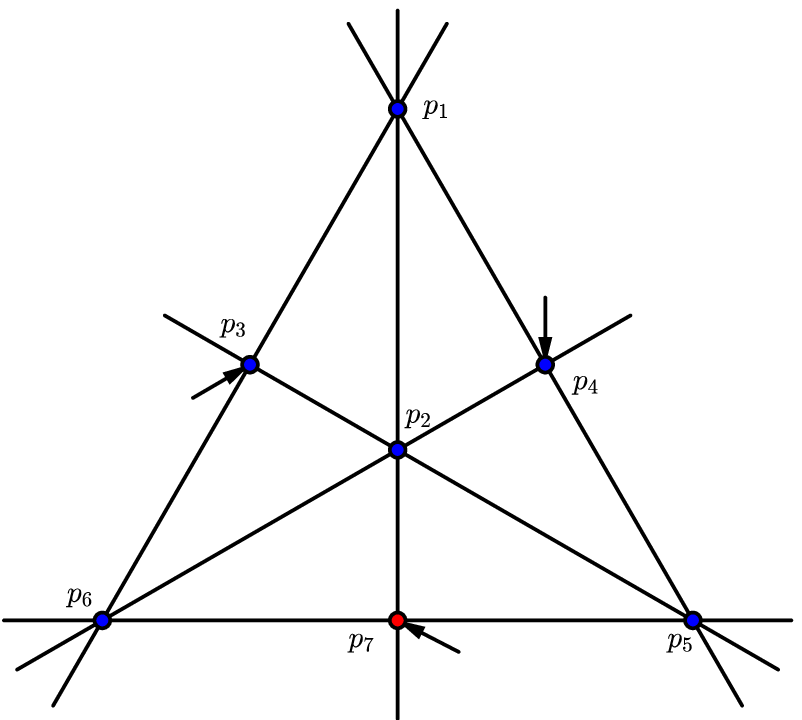}
\includegraphics[scale=.85,valign=t]{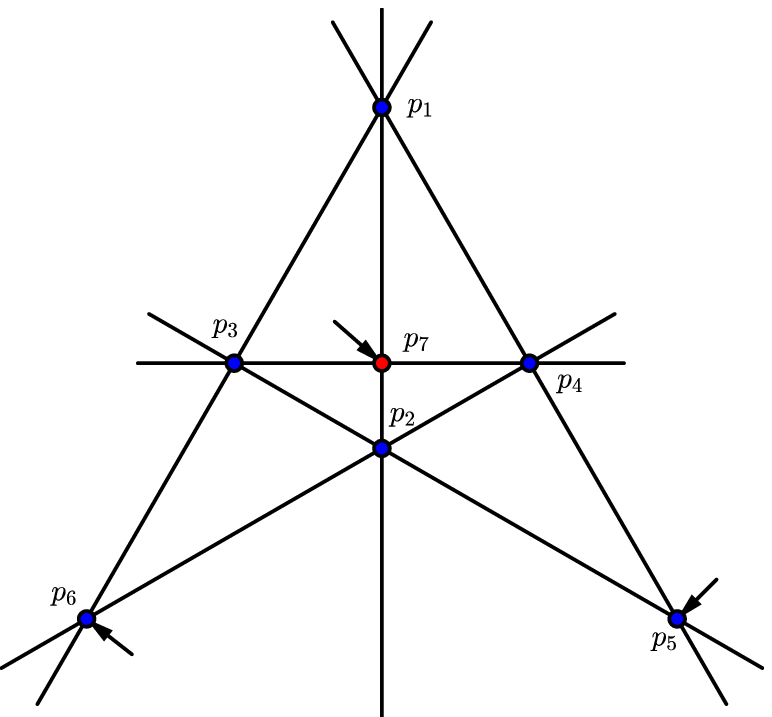}
\includegraphics[scale=.80]{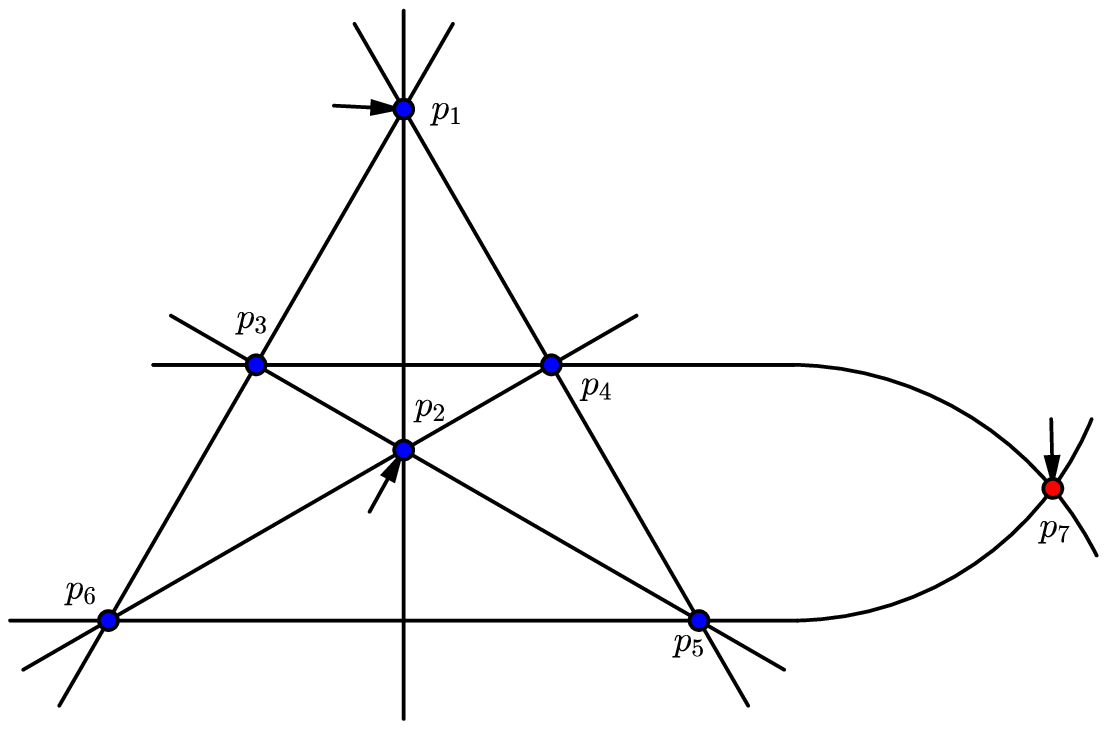}
\caption{Points arrangements in $\mathbb{P}^2$ which give special hypertree surfaces on $\overline{M}_{0,7}$.}
\label{figure4}
\end{figure}
\end{center}
Consider the action $S_7\curvearrowright\eff_2(\overline{M}_{0,7})$ induced by the natural action $S_7\curvearrowright\overline{M}_{0,7}$. When a permutation $\tau\in S_7$ acts on $\sigma\in\eff_2(\overline{M}_{0,7})$, we write $\tau\star\sigma$. Let $h_1$ be the equivalence class of the special hypertree surface obtained by using the top left points configuration in Figure~\ref{figure4}. Similarly, define $h_2$ to be the equivalence class of the special hypertree surface obtained by using the top right configuration, and $h_3$ the one obtained by using the bottom configuration in the same figure.

First, observe that $h_2$ belongs to the orbit of $h_1$ under the $S_7$-action because $h_2=((36)(45))\star h_1$. Also $h_3$ belongs to the orbit of $h_1$, because $h_3=((35)(56)(26)(24)(15)(67))\star h_2$. Therefore, it is enough to consider the $S_7$-action on $h_1$.

Let us find the stabilizer of $h_1$ under the $S_7$-action. It is easy to find the following subgroup of $\textrm{Stab}_{S_7}(h_1)$
\begin{equation*}
G_1:=\{\textrm{id},(34)(65),(47)(16),(37)(15),(156)(473),(165)(374)\},
\end{equation*}
which is isomorphic to the dihedral group $D_3$. It is less obvious to notice this other subgroup of the stabilizer
\begin{equation*}
G_2:=\{\textrm{id},(12)(56),(25)(16),(26)(15)\},
\end{equation*}
which is isomorphic to the Klein group $(\mathbb{Z}/2\mathbb{Z})\times(\mathbb{Z}/2\mathbb{Z})$. To see why $G_1,G_2\subseteq\textrm{Stab}_{S_7}(h_1)$, just take any $\tau\in G_1\cup G_2$ and observe that $\tau\star h_1$ and $h_1$ have the same intersection number with every boundary $2$-stratum on $\overline{M}_{0,7}$.

To show that $\textrm{Stab}_{S_7}(h_1)$ is actually generated by $G_1$ and $G_2$, take any $\tau\in\textrm{Stab}_{S_7}(h_1)$. Thinking of $\tau$ as a bijection $\tau\colon[7]\rightarrow[7]$, then $\tau(\{3,4,7\})=\{3,4,7\}$. This is true because, in order to preserve the intersection numbers with the boundary $2$-strata, we need to send a labeled point that lies on a line containing exactly two labeled points to a labeled point having the same property. In particular, we must have that $\tau(\{1,2,5,6\})=\{1,2,5,6\}$. Therefore $\tau$ acts by permuting the two sets $\{3,4,7\}$ and $\{1,2,5,6\}$ separately. Now there are two cases: $\tau$ fixes $2$ or not. In the first case, the only possibility for $\tau$ is to be an element of $G_1$. If $\tau$ does not fix $2$, then assume that $\tau$ is the identity on $\{3,4,7\}$ (we can assume this up to composing with an element of $G_1$). In this case, one can check that $\tau$ must be an element of $G_2$ in order to preserve the intersection numbers with the boundary $2$-strata on $\overline{M}_{0,7}$. Therefore, we just deduced that $\textrm{Stab}_{S_7}(h_1)=\langle G_1,G_2\rangle$.

An easy count tells us that $\langle G_1,G_2\rangle=24$, and therefore the orbit of $h_1$ has $7!/24=210$ distinct equivalence classes. As one can easily check, these classes generate distinct rays in $\eff_2(\overline{M}_{0,7})$. The next proposition summarizes what we proved so far.
\begin{proposition}
\label{classificationspecialhypertreesurfaceschardifferentfrom2}
In characteristic different from $2$, there are $210$ distinct equivalence classes of special hypertree surfaces on $\overline{M}_{0,7}$. These classes generate $210$ distinct rays of $\emph{Eff}_2(\overline{M}_{0,7})$ which lie outside of the cone $V_2^{KV}(\overline{M}_{0,7})$.
\end{proposition}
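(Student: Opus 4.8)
The plan is to enumerate every special hypertree surface and then count distinct numerical equivalence classes by means of the orbit--stabilizer theorem applied to the natural $S_7$-action on $\eff_2(\overline{M}_{0,7})$. I would begin by fixing one convenient planar realization $p_1,\dots,p_6$ of the (essentially unique, up to relabeling) irreducible hypertree on $[6]$, for instance the one with collinear triples $\{1,4,5\},\{1,3,6\},\{2,3,5\},\{2,4,6\}$, and letting $X$ be the union of the seven lines these points span (three lines carrying two marked points, four carrying three). A seventh point $p_7$ produces a special hypertree surface exactly when at least three of the six-point subconfigurations $p_1,\dots,\widehat{p_y},\dots,p_7$ are themselves planar realizations of irreducible hypertrees on $[7]\setminus\{y\}$. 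Since any such realization on six points has the fixed ``signature'' of exactly three two-point lines, a counting argument shows that placing $p_7$ off $X$, or at a smooth point of $X$, creates too many two-point lines after dropping a label; hence $p_7$ must sit at a crossing of at least two lines of $X$. These seven lines meet in nine points, six of which are the $p_i$ themselves, leaving precisely three admissible positions for $p_7$; each yields a special hypertree surface, with classes I denote $h_1,h_2,h_3$.

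Next I would collapse the classification to a single orbit. Every special hypertree surface arises from some labeling of some irreducible hypertree on some six-element subset, but all of these lie in one $S_7$-orbit once one exhibits explicit permutations carrying $h_2$ and $h_3$ onto $h_1$ (a product such as $(36)(45)$ handles $h_2$, and a further explicit element handles $h_3$). It then remains to study the orbit of $h_1$, and the crux is the computation of $\textrm{Stab}_{S_7}(h_1)$. Here I would use the numerical criterion that $\tau\star h_1=h_1$ if and only if $\tau\star h_1$ and $h_1$ have equal intersection number with every boundary $2$-stratum; this is legitimate because the boundary $2$-strata span $N_2(\overline{M}_{0,7})$ and the intersection form is nondegenerate. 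Preserving these intersection numbers forces $\tau$ to respect the distinction between points lying only on three-point lines and points lying on a two-point line, so $\tau$ must stabilize the partition $\{3,4,7\}\sqcup\{1,2,5,6\}$. A finite case analysis on this restricted set of permutations then identifies two visible symmetry subgroups, $G_1\cong D_3$ and $G_2\cong(\mathbb{Z}/2\mathbb{Z})^2$, and shows they generate the entire stabilizer, of order $24$. By orbit--stabilizer the orbit has $7!/24=210$ elements, giving the count.

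To finish, distinctness of the $210$ rays follows from the same intersection-number fingerprint (distinct classes have distinct intersection profiles against the boundary $2$-strata), and that these classes avoid $V_2^{KV}(\overline{M}_{0,7})$ is precisely Theorem~\ref{firstmaintheorem}. The main obstacle I anticipate is the stabilizer computation: proving the reverse containment $\textrm{Stab}_{S_7}(h_1)\subseteq\langle G_1,G_2\rangle$ requires the case analysis to be genuinely exhaustive and the intersection-number invariant to separate the remaining candidate permutations, so that no unexpected symmetry inflates the stabilizer and thereby deflates the count below $210$.
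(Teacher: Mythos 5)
Your proposal is correct and follows essentially the same route as the paper: the same planar realization of the irreducible hypertree on $[6]$, the same argument forcing $p_7$ onto one of the three unlabeled crossings of the seven-line arrangement, the same reduction to a single $S_7$-orbit, and the same stabilizer computation via $G_1\cong D_3$ and $G_2\cong(\mathbb{Z}/2\mathbb{Z})^2$ yielding $7!/24=210$, with extremality outside $V_2^{KV}(\overline{M}_{0,7})$ delegated to Theorem~\ref{firstmaintheorem}. The obstacle you flag (exhaustiveness of the stabilizer case analysis) is exactly the point the paper also has to argue carefully.
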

\begin{classification in characteristic 2}
The discussion in characteristic $2$ is essentially the same, but with the following exceptions. First of all, in Figure~\ref{figure3}, the seven lines intersect in seven points (one of which is unlabeled), giving the well known Fano configuration. Also in this case, $p_7$ has to be the unlabeled point at the intersection of three lines, and therefore we produced only one special hypertree surface. Now, if we consider the $S_7$-action, it is straightforward to see that the stabilizer of the special hypertree surface we found is $\textrm{PGL}(3,\mathbb{F}_2)$, which has $168$ elements. So, the analogue of Proposition~\ref{classificationspecialhypertreesurfaceschardifferentfrom2} in characteristic $2$ is the following.
\begin{proposition}
In characteristic $2$, there are $30$ distinct equivalence classes of special hypertree surfaces on $\overline{M}_{0,7}$. These classes generate $30$ distinct rays of $\emph{Eff}_2(\overline{M}_{0,7})$ which lie outside of the cone $V_2^{KV}(\overline{M}_{0,7})$.
\end{proposition}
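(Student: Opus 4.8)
The plan is to run the argument of Proposition~\ref{classificationspecialhypertreesurfaceschardifferentfrom2} verbatim, replacing the characteristic $\neq 2$ arrangement of Figure~\ref{figure3} by its characteristic $2$ degeneration, the Fano configuration, and then feeding the resulting symmetry group into the orbit--stabilizer count. First I would pin down the geometry. Starting from a planar realization $p_1,\ldots,p_6$ of the irreducible hypertree $\Gamma=\{\{1,4,5\},\{1,3,6\},\{2,3,5\},\{2,4,6\}\}$, the four triple lines and three double lines spanned by these points now meet, in characteristic $2$, in exactly seven points: the six labeled points together with one unlabeled point $q$ lying on the three (a priori) double lines. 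This is the Fano plane $\mathbb{P}^2(\mathbb{F}_2)$, each of whose seven lines carries three of the seven points. Repeating the ``dropping a label'' analysis of the char $\neq 2$ case, the seventh point of a special hypertree surface must lie on at least two lines of the arrangement and be distinct from $p_1,\ldots,p_6$; the only such point is $q$, so $p_7=q$ is forced and $p_1,\ldots,p_7$ fill out the Fano plane. Thus, up to the $S_7$-action, there is a single special hypertree surface in characteristic $2$; denote its class by $h$. It is indeed special (in fact super-special): deleting any one of the seven points leaves six points on four triple lines and three double lines, i.e. a planar realization of the irreducible hypertree on the remaining six labels, so all seven labels are droppable and in particular three are. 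The assertion $h\notin V_2^{KV}(\overline{M}_{0,7})$ is then immediate from Theorem~\ref{firstmaintheorem}.

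Next I would compute the stabilizer exactly as before. A permutation $\tau\in S_7$ fixes $h$ if and only if $\tau\star h$ and $h$ have the same intersection numbers with every boundary $2$-stratum, and by the projection-formula computation following Theorem~\ref{construct2cycles} these numbers are determined by the collinearity data of $p_1,\ldots,p_7$. Hence $\tau$ stabilizes $h$ precisely when it permutes the seven points preserving all lines, i.e. $\tau$ is a collineation of the Fano plane. The collineation group of the Fano plane is $\textrm{PGL}(3,\mathbb{F}_2)=\textrm{GL}(3,\mathbb{F}_2)$, of order $(2^3-1)(2^3-2)(2^3-4)=168$, so $\textrm{Stab}_{S_7}(h)=\textrm{PGL}(3,\mathbb{F}_2)$ and the orbit of $h$ consists of $7!/168=30$ distinct numerical classes. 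This accounts for all special hypertree surfaces in characteristic $2$, giving exactly $30$ equivalence classes.

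Finally, the thirty classes span distinct rays by the same reasoning used for the $210$ classes in characteristic $\neq 2$. The $S_7$-action on $N_2(\overline{M}_{0,7})$ is linear and preserves the intersection form $Q$, so if $\tau_1\star h$ and $\tau_2\star h$ were proportional, say $(\tau_2^{-1}\tau_1)\star h=r\,h$ with $r>0$, then comparing self-intersections yields $r^2 Q(h,h)=Q(h,h)$, whence $r=1$ as soon as $Q(h,h)\neq 0$; in that case $\tau_2^{-1}\tau_1\in\textrm{Stab}_{S_7}(h)$ and the two classes coincide. The only point requiring a check is the nonvanishing of $Q(h,h)$, which I expect to be the sole (and minor) obstacle: it can be extracted from the blow-up geometry of $S=\textrm{Bl}_{p_1,\ldots,p_7}\mathbb{P}^2$ inside $\overline{M}_{0,7}$, or sidestepped by exhibiting a single boundary $2$-stratum whose intersection numbers separate the orbit elements, exactly in the spirit of the char $\neq 2$ case.
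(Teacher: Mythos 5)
Your proposal follows the paper's argument essentially verbatim: the seven lines degenerate to the Fano configuration in characteristic $2$, the point $p_7$ is forced to be the unique unlabeled intersection point, the stabilizer is the collineation group $\textrm{PGL}(3,\mathbb{F}_2)$ of order $168$, and the orbit--stabilizer count gives $7!/168=30$ classes lying outside $V_2^{KV}(\overline{M}_{0,7})$ by Theorem~\ref{firstmaintheorem}. You actually supply more detail than the paper (which asserts the stabilizer computation as ``straightforward''), and your suggested sidestep for distinctness of rays --- separating orbit elements by their intersection numbers with boundary $2$-strata rather than computing $Q(h,h)$ --- is the cleaner route and consistent with how the paper handles the characteristic $\neq 2$ case.
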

\end{classification in characteristic 2}
\begin{remark}
We do not know yet if the rays generated by the equivalence classes of the special hypertree surfaces are extremal in $\eff_2(\overline{M}_{0,7})$, and certainly a proof or a disproof of the extremality of these rays would be a further step toward the understanding of the cone $\eff_2(\overline{M}_{0,7})$.
\end{remark}
\begin{remark}
We observed that the equivalence classes of the special hypertree surfaces are invariant with respect to a certain subgroup of $S_7$. Given a subgroup $G$ of $S_n$, the idea of considering $G$-invariant sub-loci of $\overline{M}_{0,n}$ intersecting the interior $M_{0,n}$ recently appeared in \cite{moonswinarski}. Also, the same idea was previously used to describe the Keel-Vermeire divisors (see \cite[Section 3]{vermeire}).
\end{remark}
\begin{remark}
Consider the moduli space $\overline{M}_{0,n}^{S_n}$, which is the quotient of $\overline{M}_{0,n}$ by the natural action $S_n\curvearrowright\overline{M}_{0,n}$. As we studied $\eff_2(\overline{M}_{0,7})$, one can also consider $\eff_2(\overline{M}_{0,7}^{S_7})$. For a study of the pseudoeffective cone $\peff_2(\overline{M}_{0,7}^{S_7})$ see \cite[Section 7.3]{fulgerlehmann}.
\end{remark}
Let us define the following subcone of $\eff_2(\overline{M}_{0,7})$.
\begin{definition}
Define $V_2^{KV+CT}(\overline{M}_{0,7})\subseteq\eff_2(\overline{M}_{0,7})$ to be the cone generated by the equivalence classes of the boundary $2$-strata, the lifts of the Keel-Vermeire divisors on $\overline{M}_{0,6}$ and the embedded blow ups of $\mathbb{P}^2$ in $\overline{M}_{0,7}$.
\end{definition}
It follows from what we proved that we have strict inclusions:$$V_2(\overline{M}_{0,7})\subsetneq V_2^{KV}(\overline{M}_{0,7})\subsetneq V_2^{KV+CT}(\overline{M}_{0,7}).$$


\section{Generalization to $\overline{M}_{0,n}$ for any $n>7$ and further questions}
\label{generalization}
We can generalize our constructions for $2$-cycles on $\overline{M}_{0,7}$ to any $\overline{M}_{0,n}$ with $n>7$. First, define $V_2^{KV}(\overline{M}_{0,n})$ inductively to be the subcone of $\eff_2(\overline{M}_{0,n})$ generated by $V_2(\overline{M}_{0,n})$ and by the lifts of the effective $2$-cycles in $V_2^{KV}(\overline{M}_{0,n-1})$. Similarly, we can define $V_2^{KV+CT}(\overline{M}_{0,n})$ inductively to be the subcone of $\eff_2(\overline{M}_{0,n})$ generated by $V_2(\overline{M}_{0,n})$, by the lifts of the effective $2$-cycles in $V_2^{KV+CT}(\overline{M}_{0,n-1})$ and by the embedded blow up of $\mathbb{P}^2$ in $\overline{M}_{0,n}$. Since we already know that $V_2(\overline{M}_{0,7})\subsetneq V_2^{KV}(\overline{M}_{0,7})\subsetneq V_2^{KV+CT}(\overline{M}_{0,7})$, it is not hard to see that we have the following strict inclusions
\begin{equation*}
V_2(\overline{M}_{0,n})\subsetneq V_2^{KV}(\overline{M}_{0,n})\subsetneq V_2^{KV+CT}(\overline{M}_{0,n}).
\end{equation*}

At this point, one can ask the following questions.
\begin{question1}
Is $V_2^{KV+CT}(\overline{M}_{0,7})$ equal to $\eff_{2}(\overline{M}_{0,7})$?
\end{question1}
\begin{question2}
Is it possible to give examples of embedded blow ups of $\mathbb{P}^2$ in $\overline{M}_{0,7}$ that generate extremal rays of $\eff_2(\overline{M}_{0,7})$?
\end{question2}
\begin{question3}
Is $\eff_2(\overline{M}_{0,7})$ equal to $\peff_2(\overline{M}_{0,7})$?
\end{question3}



\

{\small\textsc{Department of Mathematics, University of Georgia, Athens, GA 30602, USA}}

\emph{E-mail address:} \url{luca@math.uga.edu}

\end{document}